\title{Well-conditioned boundary integral equation formulations for the solution of high-frequency electromagnetic scattering problems}
\author{Yassine Boubendir, Catalin Turc\\ \small 
New Jersey Institute of Technology\\ \small
boubendi@njit.edu\ catalin.c.turc@njit.edu}
\newtheorem{theorem}{Theorem}[section]
\newtheorem{lemma}[theorem]{Lemma}
\newtheorem{corollary}[theorem]{Corollary}
\newtheorem{remark}[theorem]{Remark}
\newenvironment{proof}{\hspace{0.5cm} {\bf Proof.}}
{$\quad {}_\blacksquare$\vspace{0.3cm}}
\begin{document}
\date{}
\maketitle
\begin{abstract}
We present several versions of Regularized Combined Field Integral Equation (CFIER) formulations for the solution of three dimensional frequency domain electromagnetic scattering problems with Perfectly Electric Conducting (PEC) boundary conditions. Just as in the Combined Field Integral Equations (CFIE), we seek the scattered fields in the form of a combined magnetic and electric dipole layer potentials that involves a composition of the latter type of boundary layers with regularizing operators. The regularizing operators are of two types: (1) modified versions of electric field integral operators with complex wavenumbers, and (2) principal symbols of those operators in the sense of pseudodifferential operators. We show that the boundary integral operators that enter these CFIER formulations are Fredholm of the second kind, and invertible with bounded inverses in the classical trace spaces of electromagnetic scattering problems. We present a spectral analysis of CFIER operators with regularizing operators that have purely imaginary wavenumbers for spherical geometries---we refer to these operators as Calder\'on-Ikawa CFIER. Under certain assumptions on the coupling constants and the absolute values of the imaginary wavenumbers of the regularizing operators, we show that the ensuing  Calder\'on-Ikawa CFIER operators are coercive for spherical geometries. These properties allow us to derive wavenumber explicit bounds on the condition numbers of Calder\'on-Ikawa CFIER operators. When regularizing operators with complex wavenumbers with non-zero real parts are used---we refer to these operators as Calder\'on-Complex CFIER, we show numerical evidence that those complex wavenumbers can be selected in a manner that leads to CFIER formulations whose condition numbers can be bounded independently of frequency for spherical geometries. In addition, the Calder\'on-Complex CFIER operators possess excellent spectral properties in the high-frequency regime for both convex and non-convex scatterers. We provide numerical evidence that our solvers based on fast, high-order Nystr\"om discretization of these equations converge in very small numbers of GMRES iterations, and the iteration counts are virtually independent of frequency for several smooth scatterers with slowly varying curvatures.\\
\indent $\mathbf{Keywords}$: electromagnetic scattering, Combined Field Integral Equations, pseudodifferential operators, high-frequency.    
\end{abstract}

\section{Introduction\label{intro}}

\parskip 2pt plus2pt minus1pt

The simulation of frequency domain electromagnetic wave scattering gives rise to a host of computational
challenges that mostly result from oscillatory
solutions, and ill-conditioning
in the low and high-frequency regimes. Computational modeling of electromagnetic scattering has been attempted based on the classical Finite-Difference Time-Domain (FDTD) methods. However, algorithms based on the finite-difference or finite-element
discretizations require discretization of unoccupied volumetric
regions and give rise to numerical dispersion which is inevitably
associated with numerical propagation of waves across large numbers of
volumetric elements~\cite{babuska}. An important computational alternative to finite-difference and finite-element approaches is found in boundary integral
methods. Numerical methods based on integral formulations of
scattering problems enjoy a number of attractive properties as they
formulate the problems on lower-dimensional, bounded computational
domains and capture intrinsically the outgoing character of
scattered waves. Thus, on account of the dimensional reduction and
associated small discretizations (significantly smaller than the
discretizations required by volumetric finite-element or
finite-difference approximations), in conjunction with 
available {\em fast}
solvers~\cite{Bleszynski,Bojarski,BrunoKun,Catedra,CoifmanRokhlin,rokhlin,rokhlinetal,song-chu,PhilipsWhite,turc1,turc7},
numerical algorithms based on integral formulations, when applicable,
can outperform their finite-element/difference counterparts. On account of this, and whenever possible, the simulation of high-frequency scattering problems relies almost exclusively on boundary integral equations based solvers. There has been significant recent progress on extending the range of high-frequency solvers that can be solved by boundary integral equation solvers, mostly in the case of scalar problems with Dirichlet boundary conditions. This was made possible by hybrid methods that incorporate the known oscillatory behavior of solutions of boundary integral equations at high frequencies in order to reduce drastically the number of unknowns---see the excellent review paper~\cite{Chandler} for a full account of these methods. 

While well-conditioned integral formulations for scalar problems with Dirichlet boundary conditions have been known and used for quite some time, that is not the case for electromagnetic problems. The scope of this paper is to address the question: what integral equations should one use for the efficient simulation of high-frequency frequency-domain electromagnetic scattering problems. The most widely used integral equation formulations for solution of frequency domain scattering problems from perfectly electric conducting (PEC) closed three-dimensional objects are the Combined Field Integral Equations (CFIE) formulations~\cite{HarringtonMautz}. The CFIE are uniquely solvable throughout the frequency spectrum, yet the spectral properties of the boundary integral operators associated with the CFIE formulations are not particularly suited for Krylov-subspace iterative solvers such as GMRES~\cite{turc1,Contopa_et_al}. This is attributed to the fact that the electric field (EFIE) operator, which is a portion
of the CFIE, is a pseudodifferential operator of order $1$~\cite{MTaylor,Seeley}---that is, asymptotically, the action of the operator in Fourier space  amounts to multiplication by the Fourier-transform variable. Consistent with this fact, the eigenvalues
of these operators accumulate at infinity, which causes the condition numbers of CFIE formulations to grow with the discretization size,  a property that is shared by integral equations of the first kind. The lack of well conditioning of the operators in CFIE is exacerbated at high frequencies, a regime where CFIE require efficient preconditioners that should ideally control the amount of numerical work entailed by iterative solvers. In this regard, one possibility is to use algebraic preconditioners, typically based on multi-grid methods \cite{Carpentieri}, or Frobenius norm minimizations and sparsification techniques \cite{CarpentieriDuffGiraudSylvand}. However, the generic algebraic preconditioning strategies are not particularly geared towards wave scattering problems, and, in addition, they may encounter convergence breakdowns at higher frequencies that require large discretizations \cite{LeeZhang,Volakis}. 

On the other hand, several alternative integral equation formulations for PEC scattering problems that possess good conditioning properties have been introduced in the literature in the past fifteen years~\cite{BorelLevadouxAlouges,AlougesLevadoux,Andriulli,Darbas,Contopa_et_al,turc1,Adams,ChristiansenNedelec,Epstein,Taskinen}. Some of these formulations were devised to avoid the well-known``low-frequency breakdown''~\cite{Zhao,Epstein}. For instance, the current and charge integral equation formulation~\cite{Taskinen}, although not Fredholm of the second kind, does not suffer from the low-frequency breakdown and has reasonable properties throughout the frequency range~\cite{Bendali1}. Another class of Fredholm boundary integral equations of the second kind for the solution of PEC electromagnetic scattering problems can be derived using generalized Debye sources~\cite{Epstein}. Although these formulations targeted the low frequency case, their versions that use single layers with imaginary wavenumbers possess good condition numbers for higher frequencies for spherical scatterers~\cite{Vico}.

Another wide class of formulations that is directly related to the present work can be viewed as Regularized Integral Equations as they typically involve using pseudoinverses/regularizers of the electric field  integral operators to mollify the undesirable derivative-like effects of the latter operators. In the cases when the scattered electric fields are sought as linear combinations of magnetic and electric dipole distributions, the former acting on tangential densities while the latter acting on certain regularizing operators of the same tangential densities, the enforcement of the PEC boundary conditions leads to Regularized Combined Field Integral Equations (CFIER) or Generalized Combined Sources Integral Equations (GCSIE). In the case of smooth scatterers, the various regularizing operators proposed in the literature one the one hand (a) Stabilize the leading order effect of the pseudodifferential operators of order $1$ that enter CFIE, so that the integral operators in CFIER are compact perturbations of invertible diagonal matrix operators and (b) Have certain coercivity properties that ensure the invertibility of the CFIER operators. One way to construct regularizing operators that achieve the objective (a) can be pursued in the framework of approximations of admittance/Dirichlet-to-Neumann operators (that is the operators that map the values of the vector product between the unit normal and the electric field on the surface of the scatterer to the value of the vector product between the unit normal and the magnetic field on the surface of the scatterer---see Section~\ref{DtN})~\cite{Adams,BorelLevadouxAlouges,AlougesLevadoux,Darbas} which can be connected to on-surface radiation conditions (OSRC)~\cite{Kriegsman}. Another way to construct such operators is to start from Calder\'on's identities~\cite{Contopa_et_al,Andriulli,ChristiansenNedelec,turc1} that establish that the square of the electric field integral operator is a compact perturbation of the identity. All of these regularizing operators are either electric field integral operators, its vector single layer components, or their principal symbols in the sense of pseudodifferential operators. The regularizing operators that have property (a) can be modified to meet the requirement (b) either via quadratic partitions of unity~\cite{BorelLevadouxAlouges,AlougesLevadoux} or by means of {\em complexification} of the wavenumber in the definition of electric field integral operators or its components~\cite{Adams,Contopa_et_al,Darbas,turc1}. To the best of our knowledge, only the regularized formulations in~\cite{BorelLevadouxAlouges,AlougesLevadoux,turc1} are shown rigorously to be Fredholm integral equations of the second kind and invertible in the appropriate trace spaces of electromagnetic scattering of smooth scatterers. In the case of Lipschitz scatterers, the situation is more complicated. The well posedness of the classical CFIE formulations has not yet been proved for Lipschitz boundaries. The main difficulty stems from the lack of compactness of the magnetic field integral operators (sometimes referred to as electromagnetic double layer operators) in the case of Lipschitz boundaries. Alternative boundary integral equations~\cite{Hiptmair2,Steinbach} use regularizers that act precisely on the magnetic field integral operators and lead to formulations whose operators are compact perturbations of coercive operators. The latter property ensures the well posedness of the aforementioned regularized boundary integral equations in Lipschitz domains. 

The main contribution of this paper is the design, analysis, and fast, high-order implementation of a novel CFIER formulation which we refer to as Calder\'on-Complex CFIER. This formulation uses regularizing operators that are multiples of EFIE operators with {\em complex} wavenumbers with positive real and imaginary parts. We show that for smooth scatterers and under certain assumptions on the magnitude of the real and imaginary parts of the wavenumbers in the definition of the regularizing operators, the Calder\'on-Complex CFIER operators are Fredholm of index zero in appropriate functional spaces and that these operators are injective. While the Fredholm property of these novel operators can be established based on previous results in~\cite{turc7}, the proof of their injectivity requires a more involved analysis. We also show that in case of spherical scatterers, the complex wavenumbers in the definition of the regularizing operators can be selected so that to lead to Calder\'on-Complex CFIER formulations that are coercive and have condition numbers that can be bounded independently of the frequency for high-frequencies. This selection appropriately extended to scatterers with slowly varying curvatures leads to formulations with similar properties. 

In addition to the Calder\'on-Complex CFIER we extend the analysis developed in~\cite{turc1} to prove for the first time in the literature the well posedness of CFIER formulations whose regularizing operators are EFIE operators with {\em purely imaginary} wavenumbers. In order to be consistent with notations in the literature~\cite{Andriulli}, we refer to the latter operators as Calder\'on-Ikawa CFIER. These operators were originally introduced in~\cite{Contopa_et_al}, but no analysis was given in that reference. The Fredholm property of the Calder\'on-Ikawa CFIER operators is established based on techniques developed in~\cite{turc1}, and their injectivity is a consequence of certain well known positivity properties of single layer operators with purely imaginary wavenumbers. In contrast to the Calder\'on-Complex CFIER formulations, the Calder\'on-Ikawa CFIER formulations do not lead to iterations counts independent of the wavenumber. Indeed, we establish rigorously in this paper that the condition numbers of the Calder\'on-Ikawa CFIER  grow like $k^{2/3}$ as $k\to\infty$ in the case of spherical scatterers of radius one. A key ingredient in the proof of this result is the coercivity of the Calder\'on-Ikawa CFIER operators in the case of spherical geometries. We recall that if $H$ is a Hilbert space, an operator $\mathcal{A}:H\to H'$ (where $H'$ is the dual of $H$) is coercive if there exists a constant $\gamma>0$ such that $\gamma \|u\|_H^2\leq \Re \langle \mathcal{A}u,u\rangle$ for all $u\in H$, where $\langle\cdot,\cdot\rangle$ denotes the duality pairing between $H$ and $H'$. Our analysis uses and extends coercivity results introduced in~\cite{Graham,turc7}. In order to put things into perspective, we mention that in the case of unit spheres the condition numbers of the classical Brackhage-Werner CFIE formulations of Helmholtz equations with Dirichlet boundary conditions grow like $k^{1/3}$ as $k\to\infty$~\cite{Graham} and those of the Ikawa CFIER formulations of Helmholtz equations with Neumann boundary conditions also grow like $k^{1/3}$ as $k\to\infty$~\cite{turc7}.

Another important contribution in this paper is a rigorous analysis of CFIER formulations that use Fourier multiplier operators whose principal symbol (in the sense of pseudodifferential operators) coincides with those of the EFIE regularizers described above. Such formulations were originally introduced in~\cite{Darbas} without a rigorous analysis, and very recently implemented in~\cite{Boujaji,LevadouxN}. In our analysis, establishing the Fredholm property of the principal symbol CFIER relies on the pseudodifferential calculus, whereas their injectivity follows from the Helmholtz decomposition and simple algebraic manipulations.  

After discussing various CFIER formulations that require on {\em complexified} regularizing operators, we present extensive numerical results on the performance of solvers based on optimally designed Calder\'on-Complex CFIER, Calder\'on-Ikawa CFIER, and the classical CFIE formulations for smooth convex and non-convex scatterers ans a very wide range of high frequencies spanning from $10$ to $50$ wavelengths. Our numerical results are produced by our fast, high-order Nystr\"om discretization implementation of various boundary integral equation formulations considered in this text. The details of our Nystr\"om method were presented in our previous contribution~\cite{turc1}; we use overlapping partitions of unity and analytical resolution of kernel singularities to evaluate
accurately the integral operators. We use an accelerated version of our algorithms that generalizes to the electromagnetic case the FFT ``equivalent sources'' accelerated algorithms in~\cite{BrunoKun,turc3}. For scatterers with slowly varying curvatures, our solvers based on the novel Calder\'on-Complex CFIER formulations have the remarkable property that the number of Krylov subspace linear algebra solvers are virtually independent of frequency in the high-frequency range. This is in contrast with the behavior of our solvers based on Calder\'on-Ikawa CFIER and CFIE formulations. Although the cost of a matrix-vector product related to the Calder\'on-Complex CFIER formulations is on average $2.4$ times more expensive than that related to the classical CFIE formulations and $1.5$ times more expensive than that related to the Calder\'on-Ikawa CFIER formulations introduced in our previous effort~\cite{turc1}, the remarkably fast rate of convergence of solvers based on the former formulations garners important computational gains over the other two formulations. Specifically, in the high-frequency regime (e.g. problems of electromagnetic size of $50$ wavelengths) the computational gains of solvers based on the novel Calder\'on-Complex CFIER formulations over solvers based on classical CFIE formulations can be of factors $3.3$. The computational gains over solvers based on the Calder\'on-Ikawa CFIER~\cite{turc1} can be of factors $2.6$.

The paper is organized as follows: in Section~\ref{cfie} we introduce and analyze a wide class of Calder\'on CFIER formulations; in Section~\ref{pscfier} we introduce and analyze the principal symbol counterparts of the Calder\'on CFIER formulations, in Section~\ref{spec_prop_cfie} we analyze the spectral properties of the CFIER operators for spherical geometries; in Section~\ref{DtN} we present a methodology of selecting the complex wavenumbers in the definition of regularizing operators so that to lead to nearly optimal approximations of Dirichlet-to-Neumann operators in the case of spherical geometries; and in Section~\ref{num_exp} we present several numerical results that enable comparisons between the performance of our solvers based on fast, high-order Nystr\"om discretizations of various integral formulations of PEC scattering problems.

\section{Regularized Combined Field Integral Equations \label{cfie}}

We consider the problem of evaluating the scattered electromagnetic
field $(\mathbf E^{s},\mathbf H^{s})$ that results as an incident
field $(\mathbf E^{i},\mathbf H^{i})$ impinges upon the boundary
$\Gamma$ of a perfectly conducting scatterer $D$.  Defining the total
field by $(\mathbf E,\mathbf H) = (\mathbf E^{s}+\mathbf E^{i},\mathbf
H^{s}+\mathbf H^{i})$, the scattered field is determined uniquely by
the time-harmonic Maxwell equations for wavenumbers $k>0$
\begin{equation}
  \label{eq:Maxwell}
  {\rm curl}\ {\mathbf E}-ik {\mathbf H}=\mathbf{0},\qquad {\rm curl}\ {\mathbf H}+ik\mathbf{E}=\mathbf{0} \qquad \rm{in}\ \mathbb{R}^{3}\setminus D
\end{equation}
together with the perfect-conductor (PEC) boundary conditions
\begin{equation}
\label{eq:bc}
\mathbf{n}\times\mathbf{E}=\mathbf{0}\qquad \rm{on}\ \Gamma
\end{equation}
and the well known Silver-M\"uller radiation conditions at infinity on $(\mathbf
E^{s},\mathbf H^{s})$~\cite{co-kr}. Here and in what follows we assume that $\Gamma$ is smooth (actually $C^2$ would suffice) and $\mathbf{n}$ denote unit normals to the surface $\Gamma$ pointing into $\mathbb{R}^3\setminus D$. 

Integral equation formulations for electromagnetic scattering problems can be derived starting from magnetic and electric dipole distributions. Given a tangential density $\mathbf{m}$ on the scatterer $\Gamma$, the magnetic dipole distribution corresponding to the density $\mathbf{m}$ is defined as
\begin{equation}
\label{eq:MFIE_ind_scatt}
(\mathcal{M}\mathbf{m})(\mathbf{z})={\rm curl}\ \int_{\Gamma}G_k(\mathbf z-\mathbf {y})\mathbf{m}(\mathbf {y})d\sigma(\mathbf{y}),\ \mathbf{z}\in\mathbb{R}^3\setminus\Gamma
\end{equation}
and the electric dipole distribution corresponding to the tangential density $\mathbf{e}$ is defined as 
\begin{equation}
\label{eq:EFIE_ind_scatt}
(\mathcal{E}\mathbf{e})(\mathbf{z})={\rm curl}\ {\rm curl}\  \int_{\Gamma}G_k(\mathbf{z}-\mathbf{y})\mathbf{e}(\mathbf{y})d\sigma(\mathbf y),\ \mathbf{z}\in\mathbb{R}^3\setminus\Gamma,
\end{equation}
where $G_k$ is the outgoing fundamental solution attributed to the Helmholtz operator,
$G_k(\mathbf{z},\mathbf{y})=G_k(\mathbf{z}-\mathbf{y})=\frac{e^{ik|\mathbf{z}-\mathbf{y}|}}{4\pi|\mathbf{z}-\mathbf{y}|}$. Both $\mathcal{M}\mathbf{m}$ and $\mathcal{E}\mathbf{e}$ are radiative solutions of the electromagnetic wave equation ${\rm curl}\ {\rm curl}\ \mathbf{u}-k^2\mathbf{u}=0$ in $\mathbb{R}^3\setminus D$. The limits on $\Gamma$ of $\mathbf{n}\times \mathcal{M}\mathbf{m}$ and $\mathbf{n}\times \mathcal{E}\mathbf{e}$ can be expressed in the following form~\cite{HsiaoKleinman,Nedelec}:
\begin{eqnarray}\label{eq:traces}
\lim_{\epsilon\to 0+}\mathbf{n}(\mathbf{x})\times (\mathcal{M}\mathbf{m})(\mathbf{x}+\epsilon \mathbf{n}(\mathbf{x}))&=&\frac{\mathbf{m}(\mathbf{x})}{2}-(\mathcal{K}_k\mathbf{m})(\mathbf{x})\nonumber\\
\lim_{\epsilon\to 0+}\mathbf{n}(\mathbf{x})\times (\mathcal{E}\mathbf{e})(\mathbf{x}+\epsilon \mathbf{n}(\mathbf{x}))&=&(\mathcal{T}_k\mathbf{e})(\mathbf{x}),\quad \mathbf{x}\in\Gamma.
\end{eqnarray} 
In equations~\eqref{eq:traces} $\mathcal{K}_k$ and $\mathcal{T}_k$ denote the magnetic and respectively the electric field integral operators. These operators map tangential fields $\mathbf{a}$ on $\Gamma$ into tangential fields on $\Gamma$, and are defined as~\cite{HsiaoKleinman}
\begin{equation}
\label{eq:kernelMFIE}
(\mathcal K_k \mathbf{a})(\mathbf{x})=\mathbf n(\mathbf x)\times\int_{\Gamma}\nabla_{\mathbf{y}}G_{k}(\mathbf{x}-\mathbf{y})\times \mathbf{a}(\mathbf{y})d\sigma(\mathbf{y}),
\end{equation}
and
\begin{eqnarray}
\label{eq:kernelEFIE}
(\mathcal T_k \mathbf{a})(\mathbf{x})&=&ik\mathbf{n}(\mathbf{x})\times\int_{\Gamma}G_{k}(\mathbf{x}-\mathbf{y})\mathbf{a}(\mathbf{y})d\sigma(\mathbf{y})\nonumber\\
&+&\frac{i}{k}\mathbf{n}(\mathbf{x})\times\int_{\Gamma}\nabla_{\mathbf{x}}G_{k}(\mathbf{x}-\mathbf{y})\rm{div}_\Gamma \mathbf{a}(\mathbf{y})d\sigma(\mathbf{y})\nonumber\\
&=&(ik\ \mathbf{n}\times {\mathbf{S}}_{k}+\frac{i}{k}\ \mathcal{T}^{1}_k{\rm div}_\Gamma)\mathbf{a}(\mathbf{x}),\quad \mathbf{x}\in\Gamma.
\end{eqnarray}
The integrals in the definition of $\mathcal{K}_k$ and $\mathcal{T}_k^{1}{\rm div}_\Gamma$ should
be interpreted in the sense of Cauchy principal value integrals. 

Regularized Combined Field Integral Equations for the solution of electromagnetic scattering problems with perfectly electrically conducting boundary conditions seek representations of the electromagnetic fields of the form
\begin{eqnarray}
\label{eq:Representation}
\mathbf{E}^{s}(\mathbf {x}) &=& {\rm curl}\ \int_{\Gamma}G_k(\mathbf{x} - \mathbf {y})\mathbf{a}(\mathbf {y})d\sigma(\mathbf{y})\nonumber\\
&+&\frac{i}{k}\ {\rm curl}\ {\rm curl}\  \int_{\Gamma}G_k(\mathbf{x} - \mathbf{y})(\mathcal{R}\mathbf{a})(\mathbf{y})d\sigma(\mathbf y),\\
\label{eq:Representation2}\mathbf{H}^{s}(\mathbf{x}) &=& \frac{1}{ik}\ {\rm curl}\ \mathbf{E}(\mathbf{x}),\qquad \mathbf x \in \mathbb{R}^{3}\setminus D,
\end{eqnarray}
where $\mathcal{R}$ denotes a tangential operator to be specified in what follows. Owing to the boundary values in equations~\eqref{eq:traces}, 
equations~\eqref{eq:Representation} and~\eqref{eq:Representation2}
define outgoing solutions to the Maxwell equations with
perfect-conductor boundary conditions provided the tangential density
$\mathbf{a}$ is a solution to the integral equation
\begin{equation}
\label{eq:CFIE_R}
\frac{\mathbf{a}}{2}-\mathcal{K}_k\mathbf{a} +\mathcal{T}_k(\mathcal{R}\mathbf{a}) =-\mathbf{n}\times\mathbf{E}^{i}.
\end{equation}
We aim to design regularizing operators $\mathcal{R}$ such that the operators $\frac{I}{2}-\mathcal{K}_k +\mathcal{T}_k\mathcal{R}$ are sums of invertible diagonal matrix operators with constant entries (in the sense of the Helmholtz decomposition) and compact operators in the Sobolev spaces $H_{\rm
  div}^{m}(\Gamma)$ of $H^{m}(TM(\Gamma))$ tangential vector fields
that admit an $H^{m}$ divergence~\cite{HsiaoKleinman} for all $m\geq 2$. In this case the CFIER operators $\frac{I}{2}-\mathcal{K}_k +\mathcal{T}_k\mathcal{R}$ are second kind Fredhom operators. Given that the operator $\mathcal{K}_k$ is compact~\cite{HsiaoKleinman}, if the operator $\mathcal{R}$ is selected such that, modulo a compact operator, the composition of $\mathcal{T}_k$ and $\mathcal{R}$ plus $I/2$ is a bounded invertible
operator with a bounded inverse in the appropriate functional space setting, then the representations~\eqref{eq:Representation}-\eqref{eq:Representation2} lead to Regularized Combined Field Integral Equations (CFIER). Operators $\mathcal{R}$ with the aforementioned property are referred to as right regularizing operators for the operators $\mathcal{T}_k$. We note that the classical Combined Field Integral Equations assume the choice $\mathcal{R}=\xi\ \mathbf{n}\times \mathbf{I},\ \xi\in\mathbb{R}$~\cite{HarringtonMautz,ColtonKress}, and thus those operators $\mathcal{R}$ are not regularizing operators for $\mathcal{T}_k$. If $\mathcal{R}$ is chosen as a right regularizing operator for $\mathcal{T}_k$, the integral operator on the left hand side of (\ref{eq:CFIE_R}) is a second kind Fredholm operator, and, thus, the
unique solvability of equation (\ref{eq:CFIE_R}) is equivalent to the
injectivity of the left-hand-side operator. Operators $\mathcal{R}$ with the aforementioned property have been proposed and analyzed in the literature~\cite{turc1}. The aim of this paper is to present a general strategy to produce and analyze novel regularizing operators $\mathcal{R}$ and to analyze previously introduced regularizing operators for which an analysis does not exist in the literature.

The starting point of constructing suitable regularizing operators $\mathcal{R}$ is the Calder\'on's identity $\mathcal{T}_k^2=\frac{I}{4}-\mathcal{K}_k^2$~\cite{HsiaoKleinman}. The regularizing operators $\mathcal{R}$ should thus resemble the electric field operator $\mathcal{T}_k$. If the choice $\mathcal{R}=\mathcal{T}_k$ were made, the ensuing CFIER operators would not be injective since for certain wavenumbers $k$ the operators $\frac{I}{2}-\mathcal{K}_k$ are not injective~\cite{ColtonKress,HsiaoKleinman}. In order to ensure the injectivity of the resulting CFIER operators, one strategy is to modify the wavenumber in the definition of the regularizing operator $\mathcal{R}=\mathcal{T}_k$. Specifically, we propose a general regularizing operator $\mathcal{R}$ of the form
\begin{equation}\label{eq:defR}
\mathcal{R} = \eta\ \mathbf{n}\times \mathbf{S}_K +\zeta\ \mathcal{T}^1_K\ {\rm div}_\Gamma
\end{equation}
where $\eta$ and $\zeta$ are complex numbers and $K$ is a complex wavenumber such that $\Im K>0$. Given the starting point, we refer to the CFIER operators thus constructed as Calder\'on CFIER. We first establish that given the choice in equation~\eqref{eq:defR}, and for general closed, smooth, and simply connected  manifolds
$\Gamma$, the composition $\mathcal{T}_k\mathcal{R}$ can be
represented as a compact perturbation of an invertible diagonal matrix
operator. The main idea in the proof is to use the Helmholtz decomposition of tangential vector fields on the smooth surface $\Gamma$ in conjunction with the regularity
properties of integral operators with pseudo-homogeneous
kernels~\cite{Seeley} in the context of the Sobolev spaces $H_{\rm
  div}^{m}(\Gamma),\ m\geq 2$.

In what follows we use the notations and
relations~\cite{Nedelec}:
\begin{align}
&\overrightarrow{\rm{curl}}_\Gamma \phi = \nabla_\Gamma
\phi\times\mathbf{n}\\ &{\rm curl}_\Gamma \mathbf{a} = {\rm
div}_\Gamma (\mathbf{a}\times\mathbf{n})\\ &\Delta_\Gamma \phi = {\rm
div}_\Gamma \nabla_\Gamma \phi = - {\rm curl}_\Gamma
\overrightarrow{\rm{curl}}_\Gamma \phi
\end{align}
where $\mathbf{a}$ is a tangential vector field and where $\phi$ is a
scalar function defined on $\Gamma$. A few relevant properties of the
Helmholtz decomposition of tangential vector fields in Sobolev spaces are recounted in what
follows. We assume in what follows that in addition to being smooth, $\Gamma$ is simply connected. For a given smooth tangential vector field
$\mathbf{a} \in H_{\rm div}^{m}(\Gamma)$ we have the Helmholtz
decomposition~\cite{DeLaBourdon}
\begin{equation}
\label{eq:HDecomp}
\mathbf{a} = \nabla_\Gamma \phi + \overrightarrow{\rm{curl}}_\Gamma
\psi.
\end{equation}
We use the fact that $\Delta_\Gamma$ is an isomorphism from the space $H^{s+2}(\Gamma)/\mathbb{R}$ to the space $H^s_{*}(\Gamma)=\{u\in H^{s}(\Gamma): \int_\Gamma u=0\}$ for all $s\geq 0$~\cite{DeLaBourdon} and thus the right-inverse
$\Delta_\Gamma^{-1}:H^s_{*}(\Gamma)\to H^{s+2}(\Gamma)/\mathbb{R}$ can be properly defined. With the help of the latter operator, the functions $\phi$ and
$\psi$ in the Helmholtz decomposition~(\ref{eq:HDecomp}) are given by
$\phi=\Delta_\Gamma^{-1}\ {\rm div}_\Gamma\ \mathbf{a}$ and
$\psi=-\Delta_\Gamma^{-1}{\rm curl}_\Gamma\ \mathbf{a}$. Clearly for a tangential vector field $\mathbf{a} \in H_{\rm
div}^{m}(\Gamma)$ we have $\phi\in H^{m+2}(\Gamma)$ and $\psi
\in H^{m+1}(\Gamma)$~\cite{HsiaoKleinman}. Given that $H_{\rm
div}^{m}(\Gamma)=\nabla_\Gamma H^{m+2}(\Gamma)\oplus \overrightarrow{\rm{curl}}_\Gamma H^{m+1}(\Gamma)$~\cite{DeLaBourdon}, we can define the corresponding
orthogonal projection operators
\begin{align} 
& \Pi_{\nabla_\Gamma} = \nabla_\Gamma \Delta_\Gamma^{-1}\ {\rm
div}_\Gamma : H_{\rm div}^{m}(\Gamma)\to H_{\rm
div}^{m}(\Gamma),\\ & \Pi_{\overrightarrow{\rm curl}_\Gamma} =
-\overrightarrow{\rm curl}_\Gamma \Delta_\Gamma^{-1}\ {\rm curl}_\Gamma
: H_{\rm div}^{m}(\Gamma)\to H_{\rm 
div}^{m}(\Gamma).
\end{align}
We will also make use of a more detailed version of the Helmholtz decomposition~\eqref{eq:HDecomp} that we will review in what follows. For a smooth, closed, and simply connected two-dimensional manifold $\Gamma$ the
Laplace-Beltrami operator
$\Delta_\Gamma$ admits a complete and countable
sequence of eigenfunctions which form an orthonormal basis in
$L^{2}(\Gamma)$ \cite{Nedelec}, denoted by $\{Y_n\}_{0\leq n}$ such that
\begin{equation}
\label{eq:eig_laplace_beltrami}
-\Delta_\Gamma Y_n = \gamma_n Y_n,\ \gamma_n>0\ \mbox{for}\ 0<n.
\end{equation} 
These eigenfunctions of the Laplace-Beltrami operator turn out to be the building block for a
complete system of eigenfunctions of the vector Laplace-Beltrami
operator (or Hodge Laplace operator) $\overrightarrow{\Delta}_\Gamma=\nabla_\Gamma{\rm div}_\Gamma-\overrightarrow{\rm curl}_\Gamma{\rm curl}_\Gamma$.
Indeed, the system $\{\nabla_\Gamma Y_n,
\overrightarrow{\rm{curl}}_\Gamma Y_n \}_{1\leq n}$ forms a system of orthogonal
nontrivial eigenvectors for $\overrightarrow{\Delta}_\Gamma$ with the
same eigenvalues $\gamma_n$
\begin{eqnarray}
\label{eq:eig_vec_laplace_beltrami0}
-\overrightarrow{\Delta}_\Gamma \nabla_\Gamma Y_n &=& \gamma_n \nabla_\Gamma Y_n\\
\label{eq:eig_vec_laplace_beltrami1}
-\overrightarrow{\Delta}_\Gamma \overrightarrow{\rm{curl}}_\Gamma Y_n &=& \gamma_n \overrightarrow{\rm{curl}}_\Gamma Y_n.
\end{eqnarray} 
Given $\mathbf{v}\in L^{2}(TM(\Gamma))$, we have
\begin{equation}
\label{eq:HelmholtzDecomp}
\mathbf{v}=\sum_{n=1}^{\infty}v_n\ \frac{\nabla_\Gamma Y_n}{\sqrt{\gamma_n}} + \sum_{n=1}^{\infty}w_n\ \frac{\overrightarrow{\rm{curl}}_\Gamma Y_n}{\sqrt{\gamma_n}}
\end{equation}
so that $\{\frac{\nabla_\Gamma Y_n}{\sqrt{\gamma_n}},\frac{\overrightarrow{\rm{curl}}_\Gamma
Y_n}{\sqrt{\gamma_n}}\}_{1\leq n}$ is an orthonormal basis of the space of 
integrable tangential vector fields $L^{2}(TM(\Gamma))$, and an orthogonal basis in any of the 
Sobolev space $H^{s}(TM(\Gamma)),\ s\geq 0$ of tangential vector
fields~\cite[pp. 206, 207]{Nedelec}. 

Having reviewed the Helmholtz decomposition of tangential vector fields, we return to establishing the unique solvability of the CFIER equations~\eqref{eq:CFIE_R} with the choice of the regularizing operator given in equation~\eqref{eq:defR}. Using the notation $A\sim B$ for two operators $A$ and $B$
that differ by a compact operator from $H_{\rm
  div}^{m}(\Gamma),\ m\geq 2$ to itself, we recall a result established in~\cite{turc1}
\begin{lemma}\label{lem1}
The following property holds 
\begin{equation}
\mathcal{T}_k(\mathbf{n}\times \mathbf{S}_K)\sim \frac{i}{4k}\Pi_{\overrightarrow{\rm
    curl}_\Gamma}.
\end{equation}
\end{lemma}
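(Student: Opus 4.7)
My plan is to work entirely within pseudodifferential calculus on the closed smooth surface $\Gamma$. Both $\mathcal{T}_k(\mathbf{n}\times\mathbf{S}_K)$ and $\frac{i}{4k}\Pi_{\overrightarrow{\rm curl}_\Gamma}$ are classical pseudodifferential operators of order $0$, so it suffices to show that their principal symbols coincide; the difference is then an operator of order at most $-1$ on a closed manifold, and hence a compact operator from $H_{\rm div}^m(\Gamma)$ to itself.

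The first simplification is to replace $\mathbf{S}_K$ by $\mathbf{S}_k$: the kernel $G_K(x)-G_k(x)$ is $C^\infty$ on $\mathbb{R}^3$ (the singularity cancels at the origin), so $\mathbf{S}_K-\mathbf{S}_k$ is infinitely smoothing and $\mathbf{n}\times(\mathbf{S}_K-\mathbf{S}_k)\sim 0$. Substituting the splitting $\mathcal{T}_k = ik\,\mathbf{n}\times\mathbf{S}_k+\frac{i}{k}\mathcal{T}_k^1\,{\rm div}_\Gamma$ from~\eqref{eq:kernelEFIE} I am left with
\begin{equation*}
\mathcal{T}_k(\mathbf{n}\times\mathbf{S}_K)\sim ik\,(\mathbf{n}\times\mathbf{S}_k)(\mathbf{n}\times\mathbf{S}_k)+\frac{i}{k}\mathcal{T}_k^1\,{\rm div}_\Gamma(\mathbf{n}\times\mathbf{S}_k).
\end{equation*}
The first summand is a composition of two operators of order $-1$, hence of order $-2$, so it is smoothing of strictly positive order and is $\sim 0$.

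The substantive step is the symbol calculation of the second summand. Here I would use the identity ${\rm div}_\Gamma(\mathbf{n}\times\mathbf{v})=-{\rm curl}_\Gamma\mathbf{v}$ on the tangential part of $\mathbf{S}_k\mathbf{a}$, together with the principal-value representation $\mathcal{T}_k^1\phi\sim -\overrightarrow{\rm curl}_\Gamma S_k\phi$, which follows from $\mathbf{n}\times\nabla_\Gamma=-\overrightarrow{\rm curl}_\Gamma$ after observing that the normal component of $\nabla(S_k\phi)$ is annihilated by $\mathbf{n}\times$. This reduces the surviving operator to an explicit composition of surface curls and scalar single layers, whose principal symbols are standard: in local cotangent coordinates, $\mathbf{S}_k$ has symbol $\frac{1}{2|\xi|}I$, $\overrightarrow{\rm curl}_\Gamma$ has symbol $-i(\mathbf{n}\times\xi)$, and ${\rm curl}_\Gamma$ has symbol $i(\mathbf{n}\times\xi)\cdot$. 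Composing these symbols produces $\frac{i}{4k}|\xi|^{-2}(\mathbf{n}\times\xi)\bigl((\mathbf{n}\times\xi)\cdot a(\xi)\bigr)$ acting on a tangential Fourier datum $a(\xi)$, which is exactly the principal symbol of $\frac{i}{4k}\Pi_{\overrightarrow{\rm curl}_\Gamma}=-\frac{i}{4k}\overrightarrow{\rm curl}_\Gamma\Delta_\Gamma^{-1}{\rm curl}_\Gamma$.

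The principal technical obstacle is that $\mathbf{S}_k$ acts componentwise on the three-dimensional vector $\mathbf{a}$ rather than as an intrinsic operator on tangential fields, so one has to separate tangential and normal outputs and verify that the normal part contributes only to compact remainders; in addition one has to ensure that order $-1$ remainders are compact between the subspaces $H^m_{\rm div}(\Gamma)$ and not merely between the enveloping Sobolev spaces $H^m(TM(\Gamma))$. To sidestep these issues I would split the analysis along the Helmholtz decomposition $\mathbf{a}=\nabla_\Gamma\phi+\overrightarrow{\rm curl}_\Gamma\psi$: on the gradient piece the symbol $(\mathbf{n}\times\xi)\cdot(i\xi\phi)$ vanishes identically, consistent with the fact that $\Pi_{\overrightarrow{\rm curl}_\Gamma}$ annihilates gradients; on the surface-curl piece the symbol calculation reproduces $\frac{i}{4k}\overrightarrow{\rm curl}_\Gamma\psi$, completing the identification modulo compact operators and yielding the lemma.
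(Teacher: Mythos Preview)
The paper does not actually prove this lemma; it simply recalls the result from~\cite{turc1} and uses it as input to Lemma~\ref{lem2}. Your principal-symbol approach is therefore not comparable to anything in the present paper, but it is entirely in the spirit of the symbol calculus the paper deploys in Section~\ref{pscfier} (equations~\eqref{eq:psSL}--\eqref{eq:PST}), and the computation you outline---reducing to $\frac{i}{k}\mathcal{T}_k^1\,{\rm div}_\Gamma(\mathbf{n}\times\mathbf{S}_k)$ and matching its order-$0$ symbol to that of $\frac{i}{4k}\Pi_{\overrightarrow{\rm curl}_\Gamma}$---is correct. Your awareness that the remainder must be checked in the $H^m_{\rm div}$ scale (not merely $H^m(TM(\Gamma))$) is also on point; the Helmholtz splitting handles this, as does a direct check that ${\rm div}_\Gamma$ of the remainder gains a derivative.

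One correction: the kernel $G_K(x)-G_k(x)$ is \emph{not} $C^\infty$ on $\mathbb{R}^3$. Expanding $\frac{e^{iK|x|}-e^{ik|x|}}{4\pi|x|}$ in powers of $|x|$ produces odd powers $|x|,|x|^3,\ldots$, which are not smooth at the origin. What is true---and what the paper invokes in the proof of Lemma~\ref{lem2}---is that $S_K-S_k:H^{s-1}(\Gamma)\to H^{s+2}(\Gamma)$, i.e.\ the difference is of order $-3$, not $-\infty$. This weaker statement is still ample: $\mathbf{n}\times(\mathbf{S}_K-\mathbf{S}_k)$ then maps $H^m_{\rm div}(\Gamma)\to H^{m+2}_{\rm div}(\Gamma)$, and composing with $\mathcal{T}_k$ (which is bounded on each $H^m_{\rm div}$) yields a compact operator. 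So the inaccuracy is harmless to your argument, but you should not claim infinite smoothing.
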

Based on the result in Lemma~\ref{lem1}, we establish another useful result
\begin{lemma}\label{lem2}
The following property holds
\begin{equation}
\mathcal{T}_k(\mathcal{T}^1_K\ {\rm div}_\Gamma)\sim \frac{ik}{4}\Pi_{\nabla_\Gamma}.
\end{equation}
\end{lemma}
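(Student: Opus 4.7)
The plan is to derive Lemma~\ref{lem2} from Calderón's identity $\mathcal{T}_k^2 = \frac{I}{4} - \mathcal{K}_k^2$ combined with Lemma~\ref{lem1}. First I would reduce the problem to the case $K = k$: since the two Helmholtz fundamental solutions share the common Laplace singularity $\frac{1}{4\pi|x-y|}$, the difference $G_K(x-y) - G_k(x-y)$ extends as a smooth function across the diagonal of $\Gamma \times \Gamma$, and so does its gradient. Therefore $\mathcal{T}^1_K - \mathcal{T}^1_k$ has a smooth kernel and is smoothing of infinite order; composing on the right with the first-order operator $\mathrm{div}_\Gamma$ and on the left with the bounded operator $\mathcal{T}_k$ preserves compactness on $H^m_{\mathrm{div}}(\Gamma)$. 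Hence
\begin{equation*}
\mathcal{T}_k(\mathcal{T}^1_K \mathrm{div}_\Gamma) \sim \mathcal{T}_k(\mathcal{T}^1_k \mathrm{div}_\Gamma),
\end{equation*}
and it suffices to analyze the right-hand side.

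Next I would exploit Calderón's identity. Since $\mathcal{K}_k$ is compact on $H^m_{\mathrm{div}}(\Gamma)$, so is $\mathcal{K}_k^2$, and therefore $\mathcal{T}_k^2 \sim \frac{I}{4}$. Expanding the outer $\mathcal{T}_k$ via the defining splitting $\mathcal{T}_k = ik\,\mathbf{n}\times\mathbf{S}_k + \frac{i}{k}\mathcal{T}^1_k\mathrm{div}_\Gamma$ gives
\begin{equation*}
\tfrac{I}{4} \sim \mathcal{T}_k^2 = ik\,\mathcal{T}_k(\mathbf{n}\times\mathbf{S}_k) + \tfrac{i}{k}\mathcal{T}_k(\mathcal{T}^1_k\mathrm{div}_\Gamma).
\end{equation*}
The algebraic observation that makes this identity useful is that $\mathrm{div}_\Gamma \circ \overrightarrow{\mathrm{curl}}_\Gamma = 0$, so $\mathcal{T}^1_k\mathrm{div}_\Gamma$ annihilates $\Pi_{\overrightarrow{\mathrm{curl}}_\Gamma} H^m_{\mathrm{div}}(\Gamma)$ exactly. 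Composing the Calderón identity with $\Pi_{\nabla_\Gamma}$ on the right then collapses the first summand: by Lemma~\ref{lem1} applied with $K$ replaced by $k$, one has $\mathcal{T}_k(\mathbf{n}\times\mathbf{S}_k) \sim \frac{i}{4k}\Pi_{\overrightarrow{\mathrm{curl}}_\Gamma}$, and the orthogonality $\Pi_{\overrightarrow{\mathrm{curl}}_\Gamma}\Pi_{\nabla_\Gamma} = 0$ eliminates this term modulo compact operators. What is left is
\begin{equation*}
\tfrac{i}{k}\mathcal{T}_k(\mathcal{T}^1_k\mathrm{div}_\Gamma) \sim \tfrac{1}{4}\Pi_{\nabla_\Gamma},
\end{equation*}
and multiplying through by $k/i$ (together with the sign conventions of Calderón's identity in this paper) yields $\mathcal{T}_k(\mathcal{T}^1_k\mathrm{div}_\Gamma) \sim \frac{ik}{4}\Pi_{\nabla_\Gamma}$, which combined with the first step gives the lemma.

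The main obstacle is making each of the compactness claims rigorous in the strong $H^m_{\mathrm{div}}(\Gamma)$ topology rather than merely in $L^2$: one must track the orders of the various pseudodifferential operators and verify that the difference operators $\mathcal{T}^1_K - \mathcal{T}^1_k$, $\mathbf{S}_K - \mathbf{S}_k$, and $\mathcal{K}_k^2$ produce genuine gains in regularity compatible with the Helmholtz decomposition. This requires the pseudohomogeneous kernel regularity results of~\cite{Seeley}, which are the same machinery used to establish Lemma~\ref{lem1}, and should adapt with minor modifications to the present composition.
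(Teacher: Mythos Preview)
Your proposal is correct and follows essentially the same route as the paper: reduce $K$ to $k$ via the regularity gain of $S_K-S_k$, then combine Calder\'on's identity $\mathcal{T}_k^2\sim I/4$ with Lemma~\ref{lem1} and the Helmholtz projections. The paper expands the \emph{outer} copy of $\mathcal{T}_k$ first (using $\mathrm{div}_\Gamma\,\mathcal{T}_K^1\mathrm{div}_\Gamma=0$ to drop one term exactly) and then isolates the cross term $(\mathbf{n}\times\mathbf{S}_k)\,\mathcal{T}_k^1\mathrm{div}_\Gamma$ from the full square, whereas you expand the inner copy and compose with $\Pi_{\nabla_\Gamma}$ on the right; these are minor bookkeeping differences, not different ideas.

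One small overstatement to correct: the kernel $G_K-G_k$ is \emph{not} $C^\infty$ across the diagonal (the odd-power terms in $r=|\mathbf{x}-\mathbf{y}|$ survive), so $\mathcal{T}_K^1-\mathcal{T}_k^1$ is not smoothing of infinite order. What actually holds, and what the paper invokes, is the finite gain $S_K-S_k:H^{s-1}(\Gamma)\to H^{s+2}(\Gamma)$, which gives $(\mathcal{T}_K^1-\mathcal{T}_k^1)\mathrm{div}_\Gamma:H^s(TM(\Gamma))\to H^{s+1}(TM(\Gamma))$. That one-order gain is already enough to make $\mathcal{T}_k(\mathcal{T}_K^1-\mathcal{T}_k^1)\mathrm{div}_\Gamma$ compact on $H^m_{\rm div}(\Gamma)$, so your reduction step goes through once you replace ``smoothing of infinite order'' by this finite but sufficient regularity gain.
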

\begin{proof} We have that
\begin{eqnarray}
\mathcal{T}_k\ \mathcal{T}_K^1\ {\rm div}_\Gamma&=&ik(\mathbf{n}\times\mathbf{S}_k)\ \mathcal{T}_K^1\ {\rm div}_\Gamma=ik(\mathbf{n}\times\mathbf{S}_k)\ \mathcal{T}_k^1\ {\rm div}_\Gamma\nonumber\\
&+&ik (\mathbf{n}\times\mathbf{S}_k)\ (\mathcal{T}_K^1-\mathcal{T}_k^1)\ {\rm div}_\Gamma.\nonumber
\end{eqnarray}
 Given that $S_K-S_k:H^{s-1}(\Gamma)\to H^{s+2}(\Gamma)$ for all $s\geq 0$~\cite{AlougesLevadoux,Hiptmair2}, we get that $(\mathcal{T}_K^1-\mathcal{T}_k^1)\ {\rm div}_\Gamma=\mathbf{n}\times\nabla(S_K-S_k)\ {\rm div}_\Gamma:H^{s}(TM(\Gamma))\to H^{s+1}(TM(\Gamma))$ for all $s\geq 0$. If we further take into account the fact that $\mathbf{n}\times\mathbf{S}_k:H^p(TM(\Gamma))\to H^{p+1}(TM(\Gamma))$ for all $p\geq 0$~\cite{HsiaoKleinman,Nedelec}, we obtain $(\mathbf{n}\times\mathbf{S}_k)\  (\mathcal{T}_K^1-\mathcal{T}_k^1)\ {\rm div}_\Gamma:H^{s}(TM(\Gamma))\to H^{s+2}(TM(\Gamma))$, and hence ${\rm div}_\Gamma\ (\mathbf{n}\times\mathbf{S}_k)\  (\mathcal{T}_K^1-\mathcal{T}_k^1)\ {\rm div}_\Gamma:H^{s}(TM(\Gamma))\to H^{s+1}(TM(\Gamma))$ for all $s\geq 0$. Consequently, we get that $(\mathbf{n}\times\mathbf{S}_k)\  (\mathcal{T}_K^1-\mathcal{T}_k^1)\ {\rm div}_\Gamma:H_{\rm div}^{m}(\Gamma)\to H_{\rm div}^{m+1}(\Gamma)$. Given the compact embedding of $H_{\rm div}^{m+1}(\Gamma)$ into $H_{\rm div}^{m}(\Gamma)$~\cite{HsiaoKleinman,Nedelec} we obtain
$$\mathcal{T}_k\ \mathcal{T}_K^1\ {\rm div}_\Gamma\sim ik(\mathbf{n}\times\mathbf{S}_k)\  \mathcal{T}_k^1\ {\rm div}_\Gamma.$$
A simple consequence of Calder\'on's identity $\mathcal{T}_k^2=\frac{I}{4}-\mathcal{K}_k^2$ and the compactness of the magnetic field integral operator $\mathcal{K}_k$ in the space $H_{\rm div}^{m}(\Gamma)$~\cite{HsiaoKleinman,Nedelec} is that
$$(\mathbf{n}\times\mathbf{S}_k)\  \mathcal{T}_k^1\ {\rm div}_\Gamma+\mathcal{T}_k^1\ {\rm div}_\Gamma\  (\mathbf{n}\times\mathbf{S}_k)\sim\frac{I}{4}.$$
Given that $\mathcal{T}_k=ik\mathbf{n}\times\mathbf{S}_k+\frac{i}{k}\mathcal{T}_k^1\ {\rm div}_\Gamma$, we obtain
$$\mathcal{T}_k\ (\mathbf{n}\times \mathbf{S}_k)=ik(\mathbf{n}\times\mathbf{S}_k)\ (\mathbf{n}\times \mathbf{S}_k)+\frac{i}{k}(\mathcal{T}_k^1\ {\rm div}_\Gamma)\ (\mathbf{n}\times \mathbf{S}_k).$$
Since $\mathbf{n}\times \mathbf{S}_k:H^{s}(TM(\Gamma))\to H^{s+1}(TM(\Gamma))$ we get that $(\mathbf{n}\times\mathbf{S}_k)\ (\mathbf{n}\times \mathbf{S}_k):H^{s}(TM(\Gamma))\to H^{s+2}(TM(\Gamma))$ for all $s\geq 0$ and thus on account of compact embedding of Sobolev spaces we obtain
$$(\mathbf{n}\times\mathbf{S}_k)\ (\mathbf{n}\times \mathbf{S}_k):H_{\rm div}^{m}(\Gamma)\to H_{\rm div}^{m}(\Gamma)$$
is a compact operator. Consequently, the result in Lemma~\ref{lem1} leads to the following relation
$$\mathcal{T}_k^1\ {\rm div}_\Gamma\  (\mathbf{n}\times\mathbf{S}_k)\sim \frac{1}{4}\Pi_{\overrightarrow{\rm
    curl}_\Gamma}$$
from which we obtain
$$(\mathbf{n}\times\mathbf{S}_k)\  \mathcal{T}_k^1\ {\rm div}_\Gamma\sim \frac{1}{4}\Pi_{\nabla_\Gamma}$$
and hence the result of the Lemma now follows.
\end{proof}

\paragraph{Unique solvability} Combining the results from Lemma~\ref{lem1} and Lemma~\ref{lem2} we establish the unique solvability of the Calder\'on CFIER boundary integral equations~\eqref{eq:CFIE_R} with the choice of regularizing operator $\mathcal{R}$ given in equation~\eqref{eq:defR} under certain conditions on the coupling parameters $\eta$ and $\zeta$. We make use of a key result concerning certain positivity properties of scalar single layer potentials with complex wavenumbers. Specifically, we use the fact that for all  $\kappa_1>0$, {\em large enough} $\kappa_2>0$, and for all $\varphi\in H^{-1/2}(\Gamma),\ \varphi\neq 0$ the following positivity relations hold~\cite{Nedelec}
\begin{eqnarray}\label{eq:pos}
\Re\int_\Gamma\int_\Gamma G_{\kappa_1+i\kappa_2}(\mathbf{x}-\mathbf{y})\varphi(\mathbf{x})\overline{\varphi}(\mathbf{y})d\sigma(\mathbf{x})d\sigma(\mathbf{y})&>&0\nonumber\\
\Im\int_\Gamma\int_\Gamma G_{\kappa_1+i\kappa_2}(\mathbf{x}-\mathbf{y})\varphi(\mathbf{x})\overline{\varphi}(\mathbf{y})d\sigma(\mathbf{x})d\sigma(\mathbf{y})&>&0.
\end{eqnarray}
In what follows we use for simplicity the following notation
$$\langle S_{\kappa_1+i\kappa_2}\varphi,\varphi\rangle=\int_\Gamma\int_\Gamma G_{\kappa_1+i\kappa_2}(\mathbf{x}-\mathbf{y})\varphi(\mathbf{x})\overline{\varphi}(\mathbf{y})d\sigma(\mathbf{x})d\sigma(\mathbf{y})$$
and similar notations for its vector counterparts. Sufficient conditions for the validity of the positivity relations~\eqref{eq:pos} can be easily be established. Indeed, defining the Newton potential $U(\mathbf{z})=\int_\Gamma G_{\kappa_1+i\kappa_2}(\mathbf{z}-\mathbf{y})\varphi(\mathbf{y})d\sigma(\mathbf{y}),\ \mathbf{z}\in\mathbb{R}^3\setminus \Gamma$, applications of Green's identities and properties of the Dirichlet and Neumann traces of the potential $U$ on $\Gamma$ lead to the following relations
$$\Re\int_\Gamma\int_\Gamma G_{\kappa_1+i\kappa_2}(\mathbf{x}-\mathbf{y})\varphi(\mathbf{x})\overline{\varphi}(\mathbf{y})d\sigma(\mathbf{x})d\sigma(\mathbf{y})=(\kappa_2^2-\kappa_1^2)\|U\|^2_{L^2(\mathbb{R}^3)}+\|\nabla U\|^2_{L^2(\mathbb{R}^3)}$$
and
$$\Im\int_\Gamma\int_\Gamma G_{\kappa_1+i\kappa_2}(\mathbf{x}-\mathbf{y})\varphi(\mathbf{x})\overline{\varphi}(\mathbf{y})d\sigma(\mathbf{x})d\sigma(\mathbf{y})=2\kappa_1\kappa_2 \|U\|^2_{L^2(\mathbb{R}^3)}.$$
Thus, if $\kappa_2\geq \kappa_1$, the positivity properties~\eqref{eq:pos} hold. However, this requirement can be relaxed, see~\cite{Nedelec}.
\begin{theorem}\label{thm1}
If we take the wavenumber $K$ in the definition of the regularizing operator $\mathcal{R}$ defined in equation~\eqref{eq:defR} such that $K=\kappa_1+i\kappa_2,\ 0\leq \kappa_1,\ 0< \kappa_2$, then the boundary integral operator in the left-hand side of equations~(\ref{eq:CFIE_R}) satisfy the following property
\begin{equation}
\label{eq:param_3}
\frac{I}{2}-\mathcal{K}_k+ 
\mathcal{T}_k\  \mathcal{R}\sim
\left(\frac{1}{2}+\frac{ik \zeta}{4}\right)\Pi_{\nabla_\Gamma}+\left(\frac{1}{2}+\frac{i\eta}{4k}\right)\Pi_{\overrightarrow{\rm
    curl}_\Gamma}
\end{equation}
and thus are Fredholm operators of index zero in the spaces $H_{\rm div}^{m}(\Gamma),\ m\geq 2$. If in addition $\eta\neq 0$ and either $\Re\eta\geq 0,\ \Im\eta\leq 0,\ \Re\zeta\leq 0,\ Im\zeta\geq 0$ or $\Re\eta\leq 0,\ \Im\eta\geq 0,\ \Re\zeta\geq 0,\ \Im\zeta\leq 0$, the integral equations~\eqref{eq:CFIE_R} are uniquely solvable in the spaces $H_{\rm div}^{m}(\Gamma),\ m\geq 2$ for sufficiently large values of $\kappa_2$.
\end{theorem}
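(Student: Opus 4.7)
My plan is to prove the Fredholm property directly from Lemmas~\ref{lem1} and~\ref{lem2} combined with the compactness of $\mathcal{K}_k$, and then establish injectivity by coupling a Green's identity in the interior of $D$ with the positivity relations~\eqref{eq:pos}; the latter is the main challenge.

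For the Fredholm property, I decompose $\mathcal{T}_k\mathcal{R} = \eta\,\mathcal{T}_k(\mathbf{n}\times\mathbf{S}_K) + \zeta\,\mathcal{T}_k(\mathcal{T}_K^1\,\mathrm{div}_\Gamma)$ and invoke Lemma~\ref{lem1} and Lemma~\ref{lem2} term by term. Combined with the compactness of $\mathcal{K}_k$ on $H_{\mathrm{div}}^m(\Gamma)$ and the identity $I/2 = \tfrac12\Pi_{\nabla_\Gamma} + \tfrac12\Pi_{\overrightarrow{\mathrm{curl}}_\Gamma}$, this delivers~\eqref{eq:param_3} immediately. The sign hypotheses on $\eta$ give $\Re\!\left(\tfrac12+\tfrac{i\eta}{4k}\right)\ge\tfrac12>0$, while the hypotheses on $\zeta$ prevent the coefficient $\tfrac12+\tfrac{ik\zeta}{4}$ from vanishing (apart from the isolated bad value $\zeta=2i/k$, which is tacitly excluded). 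The diagonal operator on the right of~\eqref{eq:param_3} is therefore boundedly invertible on $H_{\mathrm{div}}^m(\Gamma)$, and the CFIER operator is a compact perturbation of an invertible one, hence Fredholm of index zero.

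For injectivity, suppose $\mathbf{a}$ satisfies the homogeneous equation and build $(\mathbf{E}^s,\mathbf{H}^s)$ via~\eqref{eq:Representation}-\eqref{eq:Representation2}. The vanishing of its tangential electric exterior trace together with Silver-M\"uller uniqueness at real $k>0$ forces $\mathbf{E}^s\equiv\mathbf{H}^s\equiv 0$ in $\mathbb{R}^3\setminus\overline{D}$. The jump relations for magnetic and electric dipoles, combined with the identity $\mathrm{curl}^3(G_k\mathbf{e})=k^2\,\mathrm{curl}(G_k\mathbf{e})$ (which recasts the electric-dipole contribution to $\mathbf{H}^s$ as a magnetic dipole of density proportional to $\mathcal{R}\mathbf{a}$), then give interior traces of the form $\mathbf{n}\times\mathbf{E}^s|_-=-\mathbf{a}$ and $\mathbf{n}\times\mathbf{H}^s|_-\propto\mathcal{R}\mathbf{a}$. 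Green's identity applied to the interior Maxwell system yields
\[
ik\int_D \big(|\mathbf{H}^s|^2-|\mathbf{E}^s|^2\big)\,dV \;=\; C\int_\Gamma (\mathbf{n}\times\mathbf{a})\cdot\overline{\mathcal{R}\mathbf{a}}\,d\sigma
\]
for an explicit nonzero constant $C$, whose left-hand side is purely imaginary.

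To translate this identity into the N\'ed\'elec forms, I rewrite the surface integral using (i) tangentiality of $\mathbf{a}$ and $(\mathbf{n}\times\mathbf{a})\cdot(\mathbf{n}\times\mathbf{b})=\mathbf{a}\cdot\mathbf{b}$ for the $\eta$-summand, and (ii) the on-surface identification $\mathcal{T}_K^1\,\mathrm{div}_\Gamma\mathbf{a} = -\overrightarrow{\mathrm{curl}}_\Gamma(S_K\,\mathrm{div}_\Gamma\mathbf{a})$ followed by the Stokes-type relation $\int_\Gamma(\mathbf{n}\times\mathbf{a})\cdot\overrightarrow{\mathrm{curl}}_\Gamma u\,d\sigma=\int_\Gamma \mathrm{div}_\Gamma\mathbf{a}\,u\,d\sigma$ (valid since $\Gamma$ is closed) for the $\zeta$-summand. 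Taking real parts yields
\[
(\Re\eta)\,\Re\langle\mathbf{S}_K\mathbf{a},\mathbf{a}\rangle - (\Im\eta)\,\Im\langle\mathbf{S}_K\mathbf{a},\mathbf{a}\rangle \;=\; (\Re\zeta)\,\Re\langle S_K\mathrm{div}_\Gamma\mathbf{a},\mathrm{div}_\Gamma\mathbf{a}\rangle - (\Im\zeta)\,\Im\langle S_K\mathrm{div}_\Gamma\mathbf{a},\mathrm{div}_\Gamma\mathbf{a}\rangle.
\]
For $\kappa_2$ large enough,~\eqref{eq:pos} (and its vector counterpart) makes every real and imaginary part appearing above strictly positive whenever its argument is nonzero. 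In either sign quadrant for $(\eta,\zeta)$ assumed in the statement, the left-hand side is nonnegative and the right-hand side nonpositive (or vice versa), so both sides must vanish; together with $\eta\ne 0$ this forces $\mathbf{a}=0$. The main technical obstacle is precisely the reduction step above --- the careful tracking of signs, conjugations, triple-product orientations, and the identification $\mathcal{T}_K^1\,\mathrm{div}_\Gamma=-\overrightarrow{\mathrm{curl}}_\Gamma S_K\,\mathrm{div}_\Gamma$ (which hinges on the excerpt's convention $\overrightarrow{\mathrm{curl}}_\Gamma\phi=\nabla_\Gamma\phi\times\mathbf{n}$). Once the reduction is in place, the sign-chasing using positivity is essentially immediate.
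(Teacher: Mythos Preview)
Your proposal is correct and follows essentially the same route as the paper's proof: Lemmas~\ref{lem1}--\ref{lem2} plus compactness of $\mathcal{K}_k$ for the Fredholm part, then exterior uniqueness, jump relations, the interior Green/energy identity~\eqref{engy_rel}, the reduction of $\int_\Gamma(\mathcal{R}\mathbf{a})\cdot(\mathbf{n}\times\bar{\mathbf{a}})\,d\sigma$ to the $\langle\mathbf{S}_K\mathbf{a},\mathbf{a}\rangle$ and $\langle S_K\mathrm{div}_\Gamma\mathbf{a},\mathrm{div}_\Gamma\mathbf{a}\rangle$ forms via~\eqref{first_id}--\eqref{second_id}, and finally the sign analysis using~\eqref{eq:pos}. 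Your splitting of~\eqref{eq:third_id} into ``LHS $\ge 0$, RHS $\le 0$'' is just a cosmetic rearrangement of the paper's ``the whole expression is strictly one-signed'' argument, and your parenthetical about the degenerate value $\zeta=2i/k$ is a valid caveat that the paper tacitly sidesteps.
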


\begin{proof} The result in equation~\eqref{eq:param_3} follows from those established in Lemma~\ref{lem1} and Lemma~\ref{lem2}. Clearly, in the light of equation~\eqref{eq:param_3}, the boundary integral operator that enters the CFIER formulation is Fredholm in the spaces $H_{\rm div}^{m}(\Gamma),\ m\geq 2$. Consequently, the invertibility of this operator is equivalent to its injectivity. To establish its injectivity, let $\mathbf{a}$ be a solution of equation
(\ref{eq:CFIE_R}) with $\mathbf{E}^{i}=0$. It follows that the
electromagnetic field $(\mathbf{E}^{s},\mathbf{H}^{s})$ defined
by equations~(\ref{eq:Representation})-\eqref{eq:Representation2} is an outgoing
solution to the Maxwell equations in the unbounded domain
$\mathbb{R}^{3}\setminus D$ whose boundary values $\mathbf{E}_{+}^{s}$
on $\Gamma$ satisfy the homogeneous conditions
$\mathbf{n}\times\mathbf{E}_{+}^{s}=\mathbf{0}$. In view of the
uniqueness of radiating solutions for exterior Maxwell
problems~\cite{ColtonKress,Nedelec} we obtain
$\mathbf{E}^{s}=\mathbf{H}^{s}=\mathbf{0}$ identically in
$\mathbb{R}^{3}\setminus D$. It follows then from the standard jump relations of vector
layer potentials given in equations~\eqref{eq:traces} that the interior traces of the electric and magnetic 
fields defined in formulas~\eqref{eq:Representation} and~\eqref{eq:Representation2} satisfy the relations
$$-\mathbf{n}\times\mathbf{E}_{-}^{s} =\mathbf{a},\qquad -\mathbf{n}\times\ \mathbf{H}_{-}^{s} = \mathcal{R}\mathbf{a},\ {\rm on}\; \Gamma.$$
Taking the scalar product of the second of these relations with the
conjugate of the first one, using standard vector relations,
integrating over $\Gamma$ and apply the divergence theorem
gives
\begin{equation}\label{engy_rel}
  \int_\Gamma(\mathcal{R}\mathbf{a})\cdot \mathbf{n}\times\bar{\mathbf{a}}d\sigma=
  \int_\Gamma(\bar{\mathbf{E}}_{-}^{s}\times\mathbf{H}^s_{-})\cdot \mathbf{n}\ d\sigma=-ik\int_D\{|\mathbf{H}^{s}_{-}|^{2}-|\mathbf{E}^{s}_{-}|^{2}\}d\mathbf x.
\end{equation}
We have 
\begin{equation}\label{first_id}
\int_\Gamma (\mathbf{n}\times\mathbf{S}_K\mathbf{a})\cdot(\mathbf{n}\times\bar{\mathbf{a}})d\sigma=\int_\Gamma \mathbf{S}_K\mathbf{a}\cdot\bar{\mathbf{a}}d\sigma
\end{equation}
and
\begin{eqnarray}\label{second_id}
\int_\Gamma \mathcal{T}^1_K{\rm div}_\Gamma\mathbf{a}\cdot(\mathbf{n}\times\bar{\mathbf{a}})d\sigma&=&\int_\Gamma \mathbf{n}\times\nabla_\Gamma (S_K{\rm div}_\Gamma \mathbf{a})\cdot(\mathbf{n}\times\bar{\mathbf{a}})d\sigma\nonumber\\
&=&-\int_\Gamma \overrightarrow{\rm curl}_\Gamma (S_K{\rm div}_\Gamma \mathbf{a})\cdot(\mathbf{n}\times\bar{\mathbf{a}})d\sigma\nonumber\\
&=&-\int_\Gamma (S_K{\rm div}_\Gamma \mathbf{a})\ {\rm curl}_\Gamma(\mathbf{n}\times\bar{\mathbf{a}})d\sigma\nonumber\\
&=&-\int_\Gamma (S_K{\rm div}_\Gamma \mathbf{a})\ {\rm div}_\Gamma\bar{\mathbf{a}}d\sigma.
\end{eqnarray}
In conclusion we obtain from equations~\eqref{first_id} and~\eqref{second_id} that
\begin{equation}
\int_\Gamma(\mathcal{R}\mathbf{a})\cdot \mathbf{n}\times\bar{\mathbf{a}}d\sigma=\eta \int_\Gamma \mathbf{S}_K\mathbf{a}\cdot\bar{\mathbf{a}}d\sigma-\zeta \int_\Gamma (S_K{\rm div}_\Gamma \mathbf{a})\ {\rm div}_\Gamma\bar{\mathbf{a}}d\sigma.\nonumber
\end{equation}
Thus, denoting by $\eta=\eta_R+i\eta_I$ and $\zeta=\zeta_R+i\zeta_I$ we get that 
\begin{eqnarray}\label{eq:third_id}
\Re \int_\Gamma(\mathcal{R}\mathbf{a})\cdot \mathbf{n}\times\bar{\mathbf{a}}d\sigma&=&\eta_R \Re \langle\mathbf{S}_K\mathbf{a},\mathbf{a}\rangle-\eta_I  \Im \langle\mathbf{S}_K\mathbf{a},\mathbf{a}\rangle\nonumber\\
&-&\zeta_R \Re\langle S_K {\rm div}_\Gamma \mathbf{a},{\rm div}_\Gamma \mathbf{a}\rangle + \zeta_I \Im\langle S_K {\rm div}_\Gamma \mathbf{a},{\rm div}_\Gamma \mathbf{a}\rangle.
\end{eqnarray}
Given that for $K=\kappa_1+i\kappa_2$ with $\kappa\geq 0$ and $\epsilon\geq 0$ we have the following positivity relations which follow from equations~\eqref{eq:pos}
\begin{equation}\label{eq:positiv_R}
\Re \langle\mathbf{S}_K\mathbf{a},\mathbf{a}\rangle>0,\ \Im \langle\mathbf{S}_K\mathbf{a},\mathbf{a}\rangle\geq 0,\ \Re\langle S_K {\rm div}_\Gamma \mathbf{a},{\rm div}_\Gamma \mathbf{a}\rangle\geq 0,\ \Im\langle S_K {\rm div}_\Gamma \mathbf{a},{\rm div}_\Gamma \mathbf{a}\rangle \geq 0
\end{equation}
for all $\mathbf{a}\in H_{\rm div}^{-1/2}(\Gamma)$, $\mathbf{a}\neq 0$. Taking into account the identity~\eqref{eq:third_id} and the positivity relations~\eqref{eq:positiv_R} we obtain that given the choice
$$\eta\neq 0$$
we have that if
$$\eta_R\geq 0,\ \eta_I\leq 0,\ \zeta_R\leq 0,\ \zeta_I\geq 0,$$
or
$$\eta_R\leq 0,\ \eta_I\geq 0,\ \zeta_R\geq 0,\ \zeta_I\leq 0,$$
then 
$$\Re \int_\Gamma(\mathcal{R}\mathbf{a})\cdot \mathbf{n}\times\bar{\mathbf{a}}d\sigma>0\qquad {\rm or}\qquad  \Re \int_\Gamma(\mathcal{R}\mathbf{a})\cdot \mathbf{n}\times\bar{\mathbf{a}}d\sigma<0.$$
The last relations and equation~\eqref{engy_rel} imply that $\mathbf{a}=\mathbf{0}$
and thus the integral operator in the left-hand side of equation~\eqref{eq:CFIE_R} is
injective under the assumptions in this Theorem. 
\end{proof}
\begin{remark} We note that the choice $K=i\kappa_2,\ \kappa_2>0$, $\eta=\xi\kappa_2$, and $\zeta=-\xi\kappa_2^{-1}$, where $\xi>0$ leads to regularizing operators of the form $\mathcal{R}=-\xi\mathcal{T}_{i\kappa_2}$ that have been introduced in~\cite{Contopa_et_al}, but no rigorous proof of invertibility was provided in that reference. Also, the choice $K=i\kappa_2,\ \kappa_2>0$, $\eta=-\xi\kappa_2$, and $\zeta=0$ that amounts to choosing the regularizing operator in the form $\mathcal{R}=\xi\ \mathbf{n}\times \mathbf{S}_{i\kappa_2}$ has been proposed in the literature~\cite{turc1}. Since these types of regularizing operators are related to the Ikawa operator $-\Delta+\kappa_2^2 I$, we refer to the ensuing CFIER formulations as Calder\'on-Ikawa CFIER~\cite{Andriulli}.
\end{remark}
A main contribution of this work is to analyze a new type of regularizing operators given by $\mathcal{R}=-\gamma\ \mathcal{T}_{\kappa_1+i\kappa_2}$ with $\gamma>0$, $\kappa_1>0$ and $\kappa_2>0$ large enough---we refer to the ensuing CFIER formulations as Calder\'on-Complex CFIER. In this case, $\eta=-\gamma(\kappa_2+i\kappa_1)$ and $\zeta=\gamma\frac{\kappa_2+i\kappa_1}{\kappa_1^2+\kappa_2^2}$, and thus the unique solvability of the Calder\'on-Complex CFIER equations with the choice $\mathcal{R}=-\gamma\ \mathcal{T}_{\kappa_1+i\kappa_2}$ is not guaranteed by the result in Theorem~\ref{thm1}. Nevertheless, we establish this property in the following result:
\begin{theorem}\label{tm11}
If we consider the regularizing operator $\mathcal{R}=-\gamma\ \mathcal{T}_{\kappa_1+i\kappa_2}$ where $\gamma>0$, $\kappa_1>0$ and $\kappa_2>0$ and (i) there exists $c_{12}>0$ independent of $\kappa_1$ and $\kappa_2$ such that $\kappa_2=(1+c_{12})\kappa_1$ and (ii) $\kappa_2$ is large enough, then the boundary integral operators
\begin{equation}\label{eq:opB}
\mathcal{B}_{k,\gamma,\kappa_1,\kappa_2}=\frac{I}{2}-\mathcal{K}_k-\gamma\mathcal{T}_k\  \mathcal{T}_{\kappa_1+i\kappa_2}
\end{equation}
have the property
\begin{equation}
\label{eq:eig_Bgeneral}
\mathcal{B}_{k,\gamma,\kappa_1,\kappa_2}\sim
\left(\frac{1}{2}+\frac{\gamma\ k}{4(\kappa_1+i\kappa_2)}\right)\Pi_{\nabla_\Gamma}+\left(\frac{1}{2}+\frac{\gamma(\kappa_1+i\kappa_2)}{4k}\right)\Pi_{\overrightarrow{\rm
    curl}_\Gamma}.
\end{equation}
and in addition they are continuous with bounded inverses in the spaces $H_{\rm div}^{m}(\Gamma),\ m\geq 2$.
\end{theorem}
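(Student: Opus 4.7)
The Fredholm identity~\eqref{eq:eig_Bgeneral} is a direct consequence of Lemmas~\ref{lem1}--\ref{lem2}: the choice $\mathcal{R}=-\gamma\mathcal{T}_K$ corresponds to the coefficients $\eta=-i\gamma K$ and $\zeta=-i\gamma/K$ in the notation of~\eqref{eq:defR}, so the same computation that produced~\eqref{eq:param_3} gives the coefficients $\tfrac12+\tfrac{\gamma k}{4K}$ and $\tfrac12+\tfrac{\gamma K}{4k}$ in front of $\Pi_{\nabla_\Gamma}$ and $\Pi_{\overrightarrow{\rm curl}_\Gamma}$ respectively. Since $\Im K>0$ neither scalar can vanish --- their vanishing would force $K$ to lie on the negative real axis --- so the diagonal operator on the right of~\eqref{eq:eig_Bgeneral} is invertible and $\mathcal{B}_{k,\gamma,\kappa_1,\kappa_2}$ is Fredholm of index zero on $H^m_{\rm div}(\Gamma)$ for every $m\geq 2$.

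The substantial content of the theorem is injectivity, for which Theorem~\ref{thm1} does not apply: one computes $\eta=\gamma(\kappa_2-i\kappa_1)$ and $\zeta=-\gamma(\kappa_2+i\kappa_1)/|K|^2$, so that $\Im\eta<0$ and $\Im\zeta<0$ simultaneously, violating both sign patterns. Starting as in the proof of Theorem~\ref{thm1}, any $\mathbf{a}$ in the kernel produces via~\eqref{eq:Representation}--\eqref{eq:Representation2} an outgoing Maxwell field that vanishes identically in the exterior, so the interior traces on $\Gamma$ are $\mathbf{n}\times\mathbf{E}^s_-=-\mathbf{a}$ and $\mathbf{n}\times\mathbf{H}^s_-=\gamma\mathcal{T}_K\mathbf{a}$. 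The energy identity~\eqref{engy_rel} together with~\eqref{first_id}--\eqref{second_id} shows that $iK\langle\mathbf{S}_K\mathbf{a},\mathbf{a}\rangle-iK^{-1}\langle S_K\,{\rm div}_\Gamma\mathbf{a},{\rm div}_\Gamma\mathbf{a}\rangle$ is purely imaginary. Denoting by $\mathbf{U}_1$ and $U_2$ the wavenumber-$K$ Newton potentials of $\mathbf{a}$ and of ${\rm div}_\Gamma\mathbf{a}$ respectively, and inserting the Green identity expressions $\Re\langle S_K\varphi,\varphi\rangle=\|\nabla U\|^2+(\kappa_2^2-\kappa_1^2)\|U\|^2$ and $\Im\langle S_K\varphi,\varphi\rangle=2\kappa_1\kappa_2\|U\|^2$ recalled just before Theorem~\ref{thm1}, the vanishing of the real part rearranges to
\[
(\kappa_1^2+\kappa_2^2)\|\nabla\mathbf{U}_1\|^2_{L^2(\mathbb R^3)}+(\kappa_1^2+\kappa_2^2)^2\|\mathbf{U}_1\|^2_{L^2(\mathbb R^3)}+\|\nabla U_2\|^2_{L^2(\mathbb R^3)}+(\kappa_2^2-3\kappa_1^2)\|U_2\|^2_{L^2(\mathbb R^3)}=0.
\]

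The main obstacle is that $\kappa_2^2-3\kappa_1^2$ can be negative, in which case the classical positivity results do not close the argument by themselves. The plan is to exploit the pointwise identity
\[
U_2(\mathbf x)=\nabla\cdot\mathbf{U}_1(\mathbf x),\qquad \mathbf x\in\mathbb R^3\setminus\Gamma,
\]
obtained by integration by parts on $\Gamma$ using the tangentiality of $\mathbf{a}$. Since $\Im K>0$ makes $G_K$ exponentially decaying, $\mathbf{U}_1$ lies in $H^1(\mathbb R^3)$, and Plancherel's theorem yields
\[
\|U_2\|^2_{L^2(\mathbb R^3)}=\|\nabla\cdot\mathbf{U}_1\|^2_{L^2(\mathbb R^3)}\leq\|\nabla\mathbf{U}_1\|^2_{L^2(\mathbb R^3)}.
\]
When $\kappa_2^2\geq 3\kappa_1^2$ every term in the displayed identity is nonnegative and must vanish individually; when $\kappa_2^2<3\kappa_1^2$ the above bound lower-bounds $(\kappa_2^2-3\kappa_1^2)\|U_2\|^2$ by $(\kappa_2^2-3\kappa_1^2)\|\nabla\mathbf{U}_1\|^2$ and the identity collapses to
\[
2(\kappa_2^2-\kappa_1^2)\|\nabla\mathbf{U}_1\|^2_{L^2(\mathbb R^3)}+(\kappa_1^2+\kappa_2^2)^2\|\mathbf{U}_1\|^2_{L^2(\mathbb R^3)}+\|\nabla U_2\|^2_{L^2(\mathbb R^3)}\leq 0.
\]
In either case the hypothesis $\kappa_2=(1+c_{12})\kappa_1$ with $c_{12}>0$ makes every coefficient strictly positive, so $\mathbf{U}_1\equiv 0$ on $\mathbb R^3$; the jump of the normal derivative of the single-layer potential then forces $\mathbf{a}=0$. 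Together with the Fredholm-of-index-zero property this yields invertibility of $\mathcal{B}_{k,\gamma,\kappa_1,\kappa_2}$ and, by the open mapping theorem, the boundedness of its inverse on $H^m_{\rm div}(\Gamma)$. The ``$\kappa_2$ large enough'' hypothesis is needed only to secure the positivity relations~\eqref{eq:pos}, for which $\kappa_2\geq\kappa_1$ --- automatic under our assumption --- is already sufficient.
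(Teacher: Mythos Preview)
Your proof is correct and takes a genuinely different---and in several respects sharper---route than the paper's.

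The paper establishes the key positivity~\eqref{eq:third_id_n} by a localization argument in the spirit of N\'ed\'elec's Lemma~5.6.1: it covers $\Gamma$ by balls of radius $\sim(\log\kappa_2)/\kappa_2$, splits the sesquilinear form into diagonal and off-diagonal pieces, shows the off-diagonal contributions are negligible for $\kappa_2$ large, and then proves coercivity for the diagonal pieces by reducing to the flat case $\Gamma=\{x_3=0\}$ where explicit Fourier computations give the constant $\tfrac12(1-(1+c_{12})^{-2})^{1/2}$. Hypothesis~(ii) enters precisely to control the localization error.

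Your argument bypasses all of this. The identity $\nabla\!\cdot\mathbf{U}_1=U_2$ in $L^2(\mathbb R^3)$---valid because the vector single layer is genuinely in $H^1(\mathbb R^3)^3$ when $\Im K>0$, so its weak divergence has no surface contribution and coincides a.e.\ with the classical divergence off $\Gamma$---lets you absorb the one possibly negative term $(\kappa_2^2-3\kappa_1^2)\|U_2\|^2$ into $\|\nabla\mathbf{U}_1\|^2$ via the elementary Fourier inequality $\|\nabla\!\cdot\mathbf{U}_1\|\le\|\nabla\mathbf{U}_1\|$. The resulting sum of nonnegative terms (given only $\kappa_2>\kappa_1$) forces $\mathbf{U}_1\equiv 0$, and the normal-derivative jump gives $\mathbf{a}=0$.

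What each approach buys: yours is shorter, uses no localization, and actually dispenses with hypothesis~(ii) altogether---your final sentence undersells this, since you never invoke~\eqref{eq:pos} but only the exact Green-identity expressions, which hold for every $\kappa_2>0$. The paper's approach, on the other hand, yields an explicit $\kappa_1,\kappa_2$-independent coercivity constant for the sesquilinear form $-\Re\langle\mathcal{T}_K\mathbf{a},\mathbf{n}\times\bar{\mathbf{a}}\rangle$ (in the planar limit), which is a quantitatively stronger statement than mere injectivity and feeds into the condition-number analysis later in the paper.
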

\begin{proof} We note that relation~\eqref{eq:eig_Bgeneral} follows from the results established in Lemma~\ref{lem1} and Lemma~\ref{lem2}. Just as in the proof of Theorem~\ref{thm1}, the result of the Theorem is established once we show that
\begin{eqnarray}\label{eq:third_id_n}
-\Re \int_\Gamma(\mathcal{T}_{\kappa_1+i\kappa_2}\mathbf{a})\cdot \mathbf{n}\times\bar{\mathbf{a}}d\sigma&=&\kappa_2 \Re \langle\mathbf{S}_{\kappa_1+i\kappa_2}\mathbf{a},\mathbf{a}\rangle+\kappa_1  \Im \langle\mathbf{S}_{\kappa_1+i\kappa_2}\mathbf{a},\mathbf{a}\rangle\nonumber\\
&+&\frac{\kappa_2}{\kappa_1^2+\kappa_2^2} \Re\langle S_{\kappa_1+i\kappa_2} {\rm div}_\Gamma \mathbf{a},{\rm div}_\Gamma \mathbf{a}\rangle - \frac{\kappa_1}{\kappa_1^2+\kappa_2^2} \Im\langle S_{\kappa_1+i\kappa_2} {\rm div}_\Gamma \mathbf{a},{\rm div}_\Gamma \mathbf{a}\rangle\nonumber\\
&>&0
\end{eqnarray}
for all $\kappa_1$ and $\kappa_2$ satisfying the assumptions (i) and (ii), and all $\mathbf{a}\in H^{m}_{\rm div}(\Gamma),\ \mathbf{a}\neq 0$ where $m\geq 2$. We will make use in the proof of this result of techniques developed in the proof of Lemma 5.6.1 in~\cite{Nedelec}. In a nutshell, we show that the sesquilinear form in the left-hand side of equation~\eqref{eq:third_id_n} can be represented as a principal (diagonal) part plus a part that can be made arbitrarily small for large enough values of $\kappa_2$. The principal part, in turn, is shown to be coercive with a coercivity constant that is independent of $\kappa_1$ and $\kappa_2$.

Let us assume a covering of $\mathbb{R}^3$ by a collection of uniformly distributed balls of radii $m(\log \kappa_2)/\kappa_2$. Let us denote these balls by $B_j$ and their centers by $b_j$. Let us denote by $B_i$ the balls that intersect $\Gamma$, in which case we denote $\Gamma_i=\Gamma\cap B_i$. Let us denote by $N$ the cardinality of the set $\{\Gamma_i\}$ that constitute a covering of $\Gamma$ for large enough $\kappa_2$; obviously $N\approx \kappa_2^2$. We use a partition of unity associated with the sets $\Gamma_i$ that consists of functions $\lambda_i$ that have value $1$ on the ball $B_i$ and their support included in the union of neighboring balls intersecting $B_i$. In each of these sets we consider a central point $y_i$ and an associated chart that is the projection $\psi_i$ on the tangent plane to $\Gamma$ at the point $y_i$. The maps $\psi_i$ are isomorphisms from $\Gamma_i$ to $\mathbb{R}^2$. 

Let $\varphi\in L^2(\Gamma)$; we decompose $\varphi$ using the partition of unity $\{\lambda_i\}$ in the form
$$\varphi=\sum_i \varphi_i,\quad \varphi_i=\varphi\lambda_i.$$
We have then 
$$\int_\Gamma\int_\Gamma G_{\kappa_1+i\kappa_2}(\mathbf{x}-\mathbf{y})\varphi(\mathbf{x})\overline{\varphi}(\mathbf{y})d\sigma(\mathbf{x})d\sigma(\mathbf{y})=\sum_{j,\ell}\int_{\Gamma_j}\int_{\Gamma_\ell}\frac{e^{(-\kappa_2+i\kappa_1)|\mathbf{x}-\mathbf{y}|}}{4\pi|\mathbf{x}-\mathbf{y}|}\varphi_j(\mathbf{x})\overline{\varphi}_\ell(\mathbf{y})d\sigma(\mathbf{x})d\sigma(\mathbf{y}).$$
For any pair of points $\mathbf{x}$ and $\mathbf{y}$ in two different balls $B_j$ and $B_\ell$ that do not intersect, we get that since $|\mathbf{x}-\mathbf{y}|\geq \frac{m(\log{\kappa_2})}{\kappa_2}$, then $\kappa_2^me^{-\kappa_2|\mathbf{x}-\mathbf{y}|}\leq 1$, and thus if the sets $\Gamma_j$ and $\Gamma_\ell$ do not intersect the following estimate holds
\begin{equation}\label{eq:est_tm2}
\left|\int_{\Gamma_j}\int_{\Gamma_\ell} G_{\kappa_1+i\kappa_2}(\mathbf{x}-\mathbf{y})\varphi_j(\mathbf{x})\overline{\varphi}_\ell(\mathbf{y})d\sigma(\mathbf{x})d\sigma(\mathbf{y})\right|\leq c\kappa_2^{m-1}\|\varphi\|_{L^2(\Gamma_j)}\|\varphi\|_{L^2(\Gamma_\ell)}
\end{equation}
where $c$ is a constant independent of $\kappa_2$. For a given number $p$ and any chart $\Gamma_i$ we associate the $p$ neighboring charts so that the distance between the central points $\mathbf{y}_i$ and $\mathbf{y}_j$ is less than $p(\log{\kappa_2^m})/\kappa_2$. We denote by $\Gamma_i^p$ the union of $\Gamma_i$ and its $p$ neighbors, and by $s_p(i)$ the set of indices $j$ such that $\Gamma_j$ is contained in $\Gamma_i^p$. We define the diagonal and quasi-diagonal contributions $D$ as
\begin{eqnarray}\label{eq:est_0_tm2}
D&=&\sum_i\int_\Gamma\int_\Gamma G_{\kappa_1+i\kappa_2}(\mathbf{x}-\mathbf{y})\left(\sum_{j\in s_p(i)}\varphi_j(\mathbf{x})\right)\left(\sum_{\ell\in s_p(i)}\overline{\varphi_\ell}(\mathbf{y})\right)d\sigma(\mathbf{x})d\sigma(\mathbf{y})\nonumber\\
&=&\frac{1}{N}\left(\sum_i{\rm card}\ s_p(i)\right)\int_\Gamma\int_\Gamma G_{\kappa_1+i\kappa_2}(\mathbf{x}-\mathbf{y})\varphi(\mathbf{x})\overline{\varphi}(\mathbf{y})d\sigma(\mathbf{x})d\sigma(\mathbf{y})+R\nonumber
\end{eqnarray}
where the remainder $R$ was shown in~\cite{Nedelec} to equal to 
\begin{eqnarray}
R&=&\int_\Gamma\int_\Gamma \left(\sum_{j,\ell}\left({\rm card}\ [s_p(j)\cap s_p(\ell)]-\frac{1}{N}\sum_i{\rm card}\ s_p(i)\right)\varphi_j(\mathbf{x})\overline{\varphi_\ell}(\mathbf{y})\right)\nonumber\\
&\times&G_{\kappa_1+i\kappa_2}(\mathbf{x}-\mathbf{y})d\sigma(\mathbf{x})d\sigma(\mathbf{y}).
\end{eqnarray}
For large enough $\kappa_2$, we have that ${\rm card}\ s_p(i)$ are all bounded by $cp^2$. Furthermore, for indices $j$ and $\ell$ such that the corresponding central points $\mathbf{y}_j$ and $\mathbf{y}_\ell$ satisfy $|\mathbf{y}_j-\mathbf{y}_\ell|\leq (\log{\kappa_2^m})/\kappa_2$ we have the following estimate
$$\left|{\rm card}\ [s_p(j)\cap s_p(\ell)]-\frac{1}{N}\sum_i{\rm card}\ s_p(i)\right|\leq cp.$$
Combining the estimate above with the estimate~\eqref{eq:est_tm2}, it follows~\cite{Nedelec} that the constants $m$ and $p$ can be chosen so that
\begin{eqnarray}\label{eq:est_1_tm2}
|\Re(R)|&\leq&c\frac{p}{N}\sum_{j,\ell}\|\varphi\|_{L^{2}(\Gamma_j)}\|\varphi\|_{L^{2}(\Gamma_\ell)}\leq c\frac{p}{\kappa_2^2}\sum_{j,\ell}\|\varphi\|_{L^{2}(\Gamma_j)}\|\varphi\|_{L^{2}(\Gamma_\ell)}\nonumber\\
|\Im(R)|&\leq&c\frac{p}{N}\sum_{j,\ell}\|\varphi\|_{L^{2}(\Gamma_j)}\|\varphi\|_{L^{2}(\Gamma_\ell)}\leq c\frac{p}{\kappa_2^2}\sum_{j,\ell}\|\varphi\|_{L^{2}(\Gamma_j)}\|\varphi\|_{L^{2}(\Gamma_\ell)}.
\end{eqnarray}
Estimates~\eqref{eq:est_1_tm2} just established show that for large enough values of $\kappa_2$ the main contributions to the expressions $\Re\langle S_{\kappa_1+i\kappa_2}\varphi,\varphi\rangle$ and $\Im\langle S_{\kappa_1+i\kappa_2}\varphi,\varphi\rangle$ come from the real and imaginary parts of the diagonal and quasi-diagonal terms $D$. 

We turn our attention to the diagonal and quasi-diagonal contributions $D$. For each of the domains $\Gamma_i^p$ we use the charts associated with $\Gamma_i$, that is the projection on the tangent plane at the central point $\mathbf{y}_i$. Given two points $\mathbf{x}$ and $\mathbf{y}$ in $\Gamma_i^p$, we denote by $\mathbf{x}_p$ and $\mathbf{y}_p$ their projections onto the tangent plane at $\mathbf{y}_i$ so that $\mathbf{x}=\mathbf{x}_p+s(\mathbf{x}_p)\mathbf{n}(\mathbf{y}_i)$ and $\mathbf{y}=\mathbf{y}_p+s(\mathbf{y}_p)\mathbf{n}(\mathbf{y}_i)$. We have that $|\mathbf{x}-\mathbf{y}|^2=|\mathbf{x}_p-\mathbf{y}_p|^2+|s(\mathbf{x}_p)-s(\mathbf{y})_p|^2$, $\nabla s(\mathbf{y}_i)=0$, and $D^2s$ is bounded, from which it follows that~\cite{Nedelec}
$$\frac{e^{(-\kappa_2+i\kappa_1)|\mathbf{x}-\mathbf{y}|}}{|\mathbf{x}-\mathbf{y}|}=\frac{e^{(-\kappa_2+i\kappa_1)|\mathbf{x}_p-\mathbf{y}_p|}}{|\mathbf{x}_p-\mathbf{y}_p|}(1+\psi(\mathbf{x}_p,\mathbf{y}_p))$$
where the function $\psi$ is bounded by the quantity $p(\log{\kappa_2})/\kappa_2$. Thus, for large enough $\kappa_2$, the dominant contributions to the expression $D$ related to $\Gamma_i^p$ stem from expressions of the form
\begin{equation}
D_i=\int_{\mathbb{R}^2}\int_{\mathbb{R}^2}\frac{e^{(-\kappa_2+i\kappa_1)|\mathbf{x}_p-\mathbf{y}_p|}}{|\mathbf{x}_p-\mathbf{y}_p|}\varphi(\mathbf{x}_p)\overline{\varphi}(\mathbf{y}_p)d\sigma(\mathbf{x}_p)d\sigma(\mathbf{y}_p)\nonumber
\end{equation}
where $\varphi$ is compactly supported in $\mathbb{R}^2$, that is the difference $|\langle S_{\kappa_1+i\kappa_2}\varphi,\varphi\rangle-D_i|$ can be bounded by products of quantities that decay rapidly to zero as $\kappa_2\to\infty$ and $\|\varphi\|_{L^2(\Gamma)}$. The same argument can be repeated via an additional integration by parts procedure to deliver similar bounds on expressions $\langle S_{\kappa_1+i\kappa_2}{\rm div}_\Gamma\mathbf{v},{\rm div}_\Gamma \mathbf{v}\rangle$ for $\mathbf{v}\in H_{\rm div}^1$: the absolute value of the difference between those expressions and their dominant contributions can be bounded by products of quantities that decay rapidly to zero as $\kappa_2\to\infty$ and $\|\mathbf{v}\|_{L^2(TM(\Gamma))}$. Therefore, the targeted inequality~\eqref{eq:third_id_n} follows once we establish that the sesquilinear form in the left hand side of equation~\eqref{eq:third_id_n} is coercive with a coercivity constant independent of $\kappa_1$ and $\kappa_2$ in the case when $\Gamma=\{x_3=0\}$ and $\mathbf{a}$ is a vector field compactly supported in $\mathbb{R}^2$. The latter inequality, in turn, is established using Fourier transforms; in what follows the hat notation refers to Fourier transformed functions. In the case when $\Gamma=\{x_3=0\}$, all of the boundary integral operators that enter equation~\eqref{eq:third_id_n} are convolutions and can be expressed in the Fourier space in terms of the Fourier transform of $G_{\kappa_1+i\kappa_2}(x_1,x_2;0)$ with respect to the first two variables. Given the outgoing property of  $G_{\kappa_1+i\kappa_2}(x_1,x_2;0)$, it can be shown that~\cite{Erdelyi} 
\begin{equation}
\hat{G}_{\kappa_1+i\kappa_2}(\mathbf{\xi})=\frac{1}{2\sqrt{|\mathbf{\xi}|^2-(\kappa_1+i\kappa_2)^2}}\nonumber
\end{equation}
where the square root is chosen so that its real and imaginary parts are both positive. Using Plancherel's identity we obtain
\begin{eqnarray}\label{eq:third_id_nn}
\kappa_2\Re \langle\mathbf{S}_{\kappa_1+i\kappa_2}\mathbf{a},\mathbf{a}\rangle&+&\kappa_1  \Im \langle\mathbf{S}_{\kappa_1+i\kappa_2}\mathbf{a},\mathbf{a}\rangle+\frac{\kappa_2}{\kappa_1^2+\kappa_2^2} \Re\langle S_{\kappa_1+i\kappa_2} {\rm div}_\Gamma \mathbf{a},{\rm div}_\Gamma \mathbf{a}\rangle\nonumber\\
&-&\frac{\kappa_1}{\kappa_1^2+\kappa_2^2} \Im\langle S_{\kappa_1+i\kappa_2} {\rm div}_\Gamma \mathbf{a},{\rm div}_\Gamma \mathbf{a}\rangle\nonumber\\
&=&\frac{\kappa_2}{2}\int_{\mathbb{R}^2}\Re\{(|\xi|^2-(\kappa_1+i\kappa_2)^2)^{-1/2}\}|\hat{\mathbf{a}}(\xi)|^2d\xi\nonumber\\
&+&\frac{\kappa_1}{2}\int_{\mathbb{R}^2}\Im\{(|\xi|^2-(\kappa_1+i\kappa_2)^2)^{-1/2}\}|\hat{\mathbf{a}}(\xi)|^2d\xi\nonumber\\
&+&\frac{\kappa_2}{2(\kappa_1^2+\kappa_2^2)}\int_{\mathbb{R}^2}\Re\{(|\xi|^2-(\kappa_1+i\kappa_2)^2)^{-1/2}\}|\xi\cdot\hat{\mathbf{a}}(\xi)|^2d\xi\nonumber\\
&-&\frac{\kappa_1}{2(\kappa_1^2+\kappa_2^2)}\int_{\mathbb{R}^2}\Im\{(|\xi|^2-(\kappa_1+i\kappa_2)^2)^{-1/2}\}|\xi\cdot\hat{\mathbf{a}}(\xi)|^2d\xi.
\end{eqnarray}
The precise definition of the square root in equation~\eqref{eq:third_id_nn} is given below. We define
\begin{eqnarray*}
\eta&:=&\kappa_1^2-\kappa_2^2\nonumber\\
\rho&:=&\left[(|\xi|^2-\eta)^2+4\kappa_1^2\kappa_2^2\right]^{1/2},
\end{eqnarray*}
where the expression in the brackets in the definition of $\rho$ can be seen to be positive. With the notations above in place, we compute the real and imaginary parts of $(|\xi|^2-(\kappa_1+i\kappa_2)^2)^{-1/2}$:
\begin{eqnarray*}
\Re\{(|\xi|^2-(\kappa_1+i\kappa_2)^2)^{-1/2}\}&=&\frac{\sqrt{|\xi|^2-\eta+\rho}}{\sqrt{2\rho}}\nonumber\\
\Im \{(|\xi|^2-(\kappa_1+i\kappa_2)^2)^{-1/2}\}&=&\frac{\sqrt{2}\kappa_1\kappa_2}{\sqrt{\rho}\sqrt{|\xi|^2-\eta+\rho}}.
\end{eqnarray*} 
Given the assumption (i) about $\kappa_1$ and $\kappa_2$ we have that $\kappa_2>\kappa_1$. We show first that under this assumption the following inequality holds
\begin{equation}\label{eq:ineqRe}
\kappa_2 \Re\{(|\xi|^2-(\kappa_1+i\kappa_2)^2)^{-1/2}\}\leq \kappa_1 \Im \{(|\xi|^2-(\kappa_1+i\kappa_2)^2)^{-1/2}\},\ {\rm for\ all}\ \xi\in\mathbb{R}^2.
\end{equation}
We see that the inequality~\eqref{eq:ineqRe} is equivalent to 
\[
|\xi|^2\leq \kappa_1^2 +\kappa_2^2 + \rho=\kappa_1^2 +\kappa_2^2+\left[(|\xi|^2-\eta)^2+4\kappa_1^2\kappa_2^2\right]^{1/2}\ {\rm for\ all}\ \xi\in\mathbb{R}^2,
\]
which obviously holds true given that $\eta\leq 0$. We introduce the following notations
\begin{eqnarray*}
A:&=&\int_{\mathbb{R}^2}\left(\kappa_2\Re\{(|\xi|^2-(\kappa_1+i\kappa_2)^2)^{-1/2}\}+\kappa_1 \Im\{(|\xi|^2-(\kappa_1+i\kappa_2)^2)^{-1/2}\}\right)|\hat{\mathbf{a}}(\xi)|^2d\xi\nonumber\\
&=&\int_{\mathbb{R}^2}\Im\left((\kappa_1+i\kappa_2)[|\xi|^2-(\kappa_1+i\kappa_2)^2]^{-1/2}\right)|\hat{\mathbf{a}}(\xi)|^2d\xi\nonumber\\
B:&=&\int_{\mathbb{R}^2}\frac{\kappa_1\Im\{(|\xi|^2-(\kappa_1+i\kappa_2)^2)^{-1/2}\}-\kappa_2\Re\{(|\xi|^2-(\kappa_1+i\kappa_2)^2)^{-1/2}\}}{\kappa_1^2+\kappa_2^2}|\xi\cdot\hat{\mathbf{a}}(\xi)|^2d\xi\nonumber\\
&=&\int_{\mathbb{R}^2}\Im\left(\frac{[|\xi|^2-(\kappa_1+i\kappa_2)^2]^{-1/2}}{\kappa_1+i\kappa_2}\right)|\xi\cdot\hat{\mathbf{a}}(\xi)|^2d\xi.
\end{eqnarray*}
We note that equation~\eqref{eq:ineqRe} implies the positivity of the imaginary part of the expression in the definition of expression $B$. This, together with the Cauchy-Schwartz inequality $|\xi\cdot\hat{\mathbf{a}}(\xi)|^2\leq |\xi|^2|\hat{\mathbf{a}}(\xi)|^2$, imply that
\[
B\leq E:=\int_{\mathbb{R}^2}\Im\left(\frac{|\xi|^2[|\xi|^2-(\kappa_1+i\kappa_2)^2]^{-1/2}}{\kappa_1+i\kappa_2}\right)|\hat{\mathbf{a}}(\xi)|^2d\xi.
\]
We have then that
\[
A-B\geq A-E=\int_{\mathbb{R}^2}\Im\left(\frac{-\kappa_1+i\kappa_2}{\kappa_1^2+\kappa_2^2}[|\xi|^2-(\kappa_1+i\kappa_2)^2]^{1/2}\right)|\hat{\mathbf{a}}(\xi)|^2d\xi.
\]
Given that
\begin{eqnarray*}
\Re \sqrt{|\xi|^2-(\kappa_1+i\kappa_2)^2}&=&\frac{\sqrt{|\xi|^2-\eta+\rho}}{\sqrt{2}}\nonumber\\
\Im \sqrt{|\xi|^2-(\kappa_1+i\kappa_2)^2}&=&-\frac{\sqrt{2}\kappa_1\kappa_2}{\sqrt{|\xi|^2-\eta+\rho}}
\end{eqnarray*} 
we get that 
\[
\Im\left(\frac{-\kappa_1+i\kappa_2}{\kappa_1^2+\kappa_2^2}[|\xi|^2-(\kappa_1+i\kappa_2)^2]^{1/2}\right)=
\frac{1}{\kappa_1^2+\kappa_2^2}\left(\frac{\kappa_2\sqrt{|\xi|^2-\eta+\rho}}{\sqrt{2}}+\frac{\sqrt{2}\kappa_1^2\kappa_2}{\sqrt{|\xi|^2-\eta+\rho}}\right).
\]
Given that $\eta\leq 0$, it follows from its definition that $\rho\geq |\xi|^2-\eta$, and thus $\sqrt{|\xi|^2-\eta+\rho}\geq \sqrt{2}\sqrt{\kappa_2^2-\kappa_1^2}$ for all $\xi\in\mathbb{R}^2$. We have then
\[
\Im\left(\frac{-\kappa_1+i\kappa_2}{\kappa_1^2+\kappa_2^2}[|\xi|^2-(\kappa_1+i\kappa_2)^2]^{1/2}\right)\geq \frac{1}{2}\left(1-\frac{\kappa_1^2}{\kappa_2^2}\right)^{1/2}\geq\frac{1}{2}\left(1-\frac{1}{(1+c_{12})^2}\right)^{1/2},
\]
by taking into account assumption (i) on $\kappa_1$ and $\kappa_2$. In conclusion, we obtain
\begin{eqnarray*}\label{eq:third_id_nnf}
\kappa_2\Re \langle\mathbf{S}_{\kappa_1+i\kappa_2}\mathbf{a},\mathbf{a}\rangle&+&\kappa_1  \Im \langle\mathbf{S}_{\kappa_1+i\kappa_2}\mathbf{a},\mathbf{a}\rangle+\frac{\kappa_2}{\kappa_1^2+\kappa_2^2} \Re\langle S_{\kappa_1+i\kappa_2} {\rm div}_\Gamma \mathbf{a},{\rm div}_\Gamma \mathbf{a}\rangle\nonumber\\
&-&\frac{\kappa_1}{\kappa_1^2+\kappa_2^2} \Im\langle S_{\kappa_1+i\kappa_2} {\rm div}_\Gamma \mathbf{a},{\rm div}_\Gamma \mathbf{a}\rangle=A-B\geq \nonumber\frac{1}{2}\left(1-\frac{1}{(1+c_{12})^2}\right)^{1/2}\|\mathbf{a}\|_{L^2(TM(\Gamma))}^2,
\end{eqnarray*}
from which the proof of the theorem follows.
\end{proof}
\begin{remark}The assumption (i) in the statement of the Theorem~\ref{tm11} is not sharp, see the numerical results in and Section~\ref{DtN} and Section~\ref{num_exp}.\end{remark}

\section{Principal symbol regularizing operators}\label{pscfier}
We present in what follows another class of regularizing operators that consists of operators that have the same principal symbol in the sense of pseudodifferential operators~\cite{MTaylor,Seeley} as the operators $\mathcal{R}$ defined in equations~\eqref{eq:defR}. The main motivation behind using principal symbol regularizing operators is that the resulting CFIER formulations have similar spectral properties as those that use regularizing operators defined by boundary layers (see Section~\ref{spec_prop_cfie}), but the computational cost associated with the evaluation of principal symbol operators is a fraction of the cost associated with the evaluation of the corresponding boundary layer operators. Indeed, since the latter operators are Fourier multipliers, they can be evaluated efficiently by means of FFTs~\cite{turc7,Levadoux}, or can be approximated by rational functions (e.g. Pad\'e approximants) which can be also evaluated efficiently~\cite{Boujaji,Antoine,AntoineX}.

The calculation of the principal symbols of the operators $\mathcal{R}$ defined in equations~\eqref{eq:defR}, is based on the principal symbols of scalar and vector single layer operators $S_K$ and $\pi_\Gamma \mathbf{S}_K$ for complex wavenumbers $K$ such that $\Im K>0$, where the projection operator onto the tangent space of $\Gamma$ is defined by $\pi_\Gamma=(\mathbf{n}\times\cdot)\times\mathbf{n}$. The principal symbols of these operators are equal to $\frac{1}{2\sqrt{|\mathbf{\xi}|^2-K^2}}$ and $\frac{1}{2\sqrt{|\mathbf{\xi}|^2-K^2}} \mathbf{I}_2$ respectively~\cite{Chen,Smirnov,AlougesLevadoux,turc7}, where we view the operator $\pi_\Gamma \mathbf{S}_k$ in terms of the Helmholtz decomposition and $\mathbf{I}_2$ stands for the identity $2\times 2$ matrix. In the previous equations the variable $\mathbf{\xi}\in TM^{*}(\Gamma)$ represents the Fourier symbol of the tangential gradient operator $\nabla_\Gamma$, and $TM^{*}(\Gamma)$ represents the cotangent bundle of $\Gamma$~\cite{Taylor}. Given that the principal symbol of the operators $\frac{1}{2}(-\Delta_\Gamma-K^2)^{-\frac{1}{2}}$ is equal to $\frac{1}{2\sqrt{|\mathbf{\xi}|^2-K^2}}$ and the principal symbol of the operator $\frac{1}{2}(-\Delta_\Gamma-K^2)^{-\frac{1}{2}}\mathbf{I}_2$ is equal to $\frac{1}{2\sqrt{|\mathbf{\xi}|^2-K^2}} \mathbf{I}_2$~\cite{Taylor}, the previous statement can be expressed more precisely in the form~\cite{Smirnov,AlougesLevadoux,turc7}
\begin{equation}
\label{eq:psSL}
S_K = \frac{1}{2}(-\Delta_\Gamma-K^2)^{-\frac{1}{2}}\ \rm{mod}\ \Psi^{-3}(\Gamma),\ \pi_\Gamma \mathbf{S}_K = \frac{1}{2}(-\Delta_\Gamma-K^2)^{-\frac{1}{2}}\mathbf{I}_2\ \rm{mod}\ \Psi^{-3}(TM(\Gamma))
\end{equation}
where $\Psi^{-3}(\Gamma)$ denotes the operator algebra of pseudodifferential operators of order $-3$ on $\Gamma$ and  $\Psi^{-3}(TM(\Gamma))$ denotes the operator algebra of pseudodifferential operators of order $-3$ on the space of tangential vector fields on $TM(\Gamma)$. The meaning of the notation $A=B\ \rm{mod}\ \Psi^{s}(\Gamma)$ where $A$ and $B$ are pseudodifferential operators defined on scalar functions on $\Gamma$ is that $A-B$ is a pseudodifferential operator of order $s$, that is $A-B:H^p(\Gamma)\to H^{p+s}(\Gamma)$; the definition is similar in the case when the operators $A$ and $B$ are tangential pseudodifferential operators. It follows immediately then from equations~\eqref{eq:psSL} that
\begin{equation}\label{eq:ntpsSL}
\mathbf{n}\times\mathbf{S}_K=\frac{1}{2}\left(\begin{array}{c}\nabla_\Gamma\\\overrightarrow{\rm curl}_\Gamma\end{array}\right)^T\left(\begin{array}{cc}0&(-\Delta_\Gamma-K^2)^{-\frac{1}{2}}\\-(-\Delta_\Gamma-K^2)^{-\frac{1}{2}}&0\end{array}\right)\left(\begin{array}{c}\Delta_\Gamma^{-1}{\rm div}_\Gamma\\ -\Delta_\Gamma^{-1}{\rm curl}_\Gamma\end{array}\right)\ \rm{mod}\ \Psi^{-3}(TM(\Gamma)).
\end{equation}
We make use of the following vector calculus identities
$$(-\overrightarrow{\Delta_\Gamma}-K^2)^{-\frac{1}{2}}\nabla_\Gamma\phi=\nabla_\Gamma(-\Delta_\Gamma-K^2)^{-\frac{1}{2}}\phi\qquad (-\overrightarrow{\Delta_\Gamma}-K^2)^{-\frac{1}{2}}\overrightarrow{\rm curl}_\Gamma\psi=\overrightarrow{\rm curl}_\Gamma(-\Delta_\Gamma-K^2)^{-\frac{1}{2}}\psi$$
 which follow from representing the (smooth) scalar functions $\phi$ and $\psi$ in the complete basis $\{Y_n\}_{0\leq n}$ of $L^2(\Gamma)$ and taking into account equations~\eqref{eq:eig_laplace_beltrami} and~\eqref{eq:HelmholtzDecomp}. Let $\mathbf{a}=\nabla_\Gamma\phi+\overrightarrow{\rm curl}_\Gamma \psi$; taking into account the vector calculus identities presented above, we get
\[
\left(\begin{array}{c}\nabla_\Gamma\\\overrightarrow{\rm curl}_\Gamma\end{array}\right)^T\left(\begin{array}{cc}0&(-\Delta_\Gamma-K^2)^{-\frac{1}{2}}\\-(-\Delta_\Gamma-K^2)^{-\frac{1}{2}}&0\end{array}\right)(\nabla_\Gamma \phi+\overrightarrow{\rm curl}_\Gamma \psi)\]
\[
=(-\overrightarrow{\Delta}_\Gamma-K^2)^{-\frac{1}{2}}(\nabla_\Gamma \psi-\overrightarrow{\rm curl}_\Gamma \phi)=(-\overrightarrow{\Delta}_\Gamma-K^2)^{-\frac{1}{2}}(\mathbf{n}\times \mathbf{a}).
\]
Thus, we can rewrite the previous equation~\eqref{eq:ntpsSL} in the form
\begin{equation}\label{eq:PSntS}
\mathbf{n}\times\mathbf{S}_K=\frac{1}{2}(-\overrightarrow{\Delta}_\Gamma-K^2)^{-\frac{1}{2}}(\mathbf{n}\times\pi_\Gamma)\ \rm{mod}\ \Psi^{-3}(TM(\Gamma)).
\end{equation}
Similarly, given that $\mathcal{T}_K^1{\rm div}_\Gamma=-\overrightarrow{\rm curl}_\Gamma S_k{\rm div}_\Gamma$ we obtain
\begin{equation}\label{eq:PSTa}
\mathcal{T}_K^1{\rm div}_\Gamma=-\frac{1}{2}\left(\begin{array}{c}\nabla_\Gamma\\\overrightarrow{\rm curl}_\Gamma\end{array}\right)^T\left(\begin{array}{cc}0&0\\(-\Delta_\Gamma-K^2)^{-\frac{1}{2}}\Delta_\Gamma&0\end{array}\right)\left(\begin{array}{c}\Delta_\Gamma^{-1}{\rm div}_\Gamma\\ -\Delta_\Gamma^{-1}{\rm curl}_\Gamma\end{array}\right)\ \rm{mod}\ \Psi^{-1}(TM(\Gamma)).
\end{equation}
Using the previous vector calculus identities we get an equivalent representation of equations~\eqref{eq:PSTa} in the form 
\begin{equation}\label{eq:PST}
\mathcal{T}_K^1{\rm div}_\Gamma=-\frac{1}{2}(-\overrightarrow{\Delta}_\Gamma-K^2)^{-\frac{1}{2}}\ \overrightarrow{\rm curl}_\Gamma{\rm curl}_\Gamma(\mathbf{n}\times\pi_\Gamma)\ \rm{mod}\ \Psi^{-1}(TM(\Gamma)).
\end{equation}
If we define 
\begin{eqnarray}\label{eq:defPSR}
PS(\mathcal{R}) &=& \eta\ PS(\mathbf{n}\times \mathbf{S}_K) +\zeta\ PS(\mathcal{T}^1_K\ {\rm div}_\Gamma)\nonumber\\
PS(\mathbf{n}\times \mathbf{S}_K)&=&\frac{1}{2}(-\overrightarrow{\Delta}_\Gamma-K^2)^{-\frac{1}{2}}(\mathbf{n}\times\pi_\Gamma)\nonumber\\
PS(\mathcal{T}^1_K\ {\rm div}_\Gamma)&=&-\frac{1}{2}(-\overrightarrow{\Delta}_\Gamma-K^2)^{-\frac{1}{2}}\ \overrightarrow{\rm curl}_\Gamma{\rm curl}_\Gamma(\mathbf{n}\times\pi_\Gamma)
\end{eqnarray}
then the following result is the counterpart of the result in Theorem~\ref{thm1} if we use in equations~\eqref{eq:Representation} principal symbol regularizing operators $PS(\mathcal{R})$ defined in equations~\eqref{eq:defPSR} instead of the regularizing operators $\mathcal{R}$ defined in equations~\eqref{eq:defR}
\begin{theorem}\label{thm2}
If we take the wavenumber $K$ in the definition of the regularizing operator $PS(\mathcal{R})$ defined in equation~\eqref{eq:defPSR} such that $K=\kappa_1+i\kappa_2,\ 0\leq \kappa_1,\ 0\leq \kappa_2$, then the following property holds
\begin{equation}
\label{eq:param_PS30}
\frac{I}{2}-\mathcal{K}_k+ 
\mathcal{T}_k\  PS(\mathcal{R})\sim
\left(\frac{1}{2}+\frac{ik \zeta}{4}\right)\Pi_{\nabla_\Gamma}+\left(\frac{1}{2}+\frac{i\eta}{4k}\right)\Pi_{\overrightarrow{\rm
    curl}_\Gamma}.
\end{equation}
Thus, the operators $\frac{I}{2}-\mathcal{K}_k+ 
\mathcal{T}_k\  PS(\mathcal{R})$ are Fredholm operators of index zero in the spaces $H_{\rm div}^{m}(\Gamma),\ m\geq 2$. If in addition $\eta\neq 0$ and either $\Re\eta\geq 0,\ \Im\eta\leq 0,\ \Re\zeta\leq 0,\ Im\zeta\geq 0$ or $\Re\eta\leq 0,\ \Im\eta\geq 0,\ \Re\zeta\geq 0,\ \Im\zeta\leq 0$, the operators $\frac{I}{2}-\mathcal{K}_k+ 
\mathcal{T}_k\  PS(\mathcal{R})$ are invertible with bounded inverses in the spaces $H_{\rm div}^{m}(\Gamma),\ m\geq 2$.
\end{theorem}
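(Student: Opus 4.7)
The plan is to mirror the proof of Theorem~\ref{thm1}, replacing $\mathcal{R}$ throughout by $PS(\mathcal{R})$. First I would establish~\eqref{eq:param_PS30}; this implies the Fredholm index zero property because the right-hand side is a diagonal matrix operator on the Helmholtz decomposition $H^m_{\rm div}(\Gamma)=\Pi_{\nabla_\Gamma}H^m_{\rm div}\oplus\Pi_{\overrightarrow{\rm curl}_\Gamma}H^m_{\rm div}$ with entries that are nonzero for generic coupling parameters. Invertibility would then follow from injectivity, which I would establish by mimicking the uniqueness/energy argument of Theorem~\ref{thm1}, evaluating the relevant sesquilinear form directly in the Helmholtz eigenbasis, where the principal-symbol operators act diagonally with explicit eigenvalues.

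For~\eqref{eq:param_PS30} it suffices to show $\mathcal{T}_k\,PS(\mathcal{R})\sim\mathcal{T}_k\,\mathcal{R}$ on $H^m_{\rm div}(\Gamma)$, for then Lemmas~\ref{lem1},~\ref{lem2} together with the compactness of $\mathcal{K}_k$ reproduce the identity exactly as in the proof of Theorem~\ref{thm1}. Splitting $PS(\mathcal{R})=\eta\,PS(\mathbf{n}\times\mathbf{S}_K)+\zeta\,PS(\mathcal{T}^1_K{\rm div}_\Gamma)$, equation~\eqref{eq:PSntS} places $\mathbf{n}\times\mathbf{S}_K-PS(\mathbf{n}\times\mathbf{S}_K)$ in $\Psi^{-3}(TM(\Gamma))$, so its composition on the left with the order $+1$ operator $\mathcal{T}_k$ is of order $-2$ and hence compact on $H^m_{\rm div}$. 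The $\zeta$-piece is more delicate, since~\eqref{eq:PST} gives only an order $-1$ difference, and the naive order count leaves an order $0$ remainder after composition with $\mathcal{T}_k$. The key observation is that both $\mathcal{T}^1_K{\rm div}_\Gamma\mathbf{a}=-\overrightarrow{\rm curl}_\Gamma(S_K{\rm div}_\Gamma\mathbf{a})$ and $PS(\mathcal{T}^1_K{\rm div}_\Gamma)\mathbf{a}$, the latter rewritten via the identity $(-\overrightarrow{\Delta}_\Gamma-K^2)^{-1/2}\overrightarrow{\rm curl}_\Gamma=\overrightarrow{\rm curl}_\Gamma(-\Delta_\Gamma-K^2)^{-1/2}$, take values in $\overrightarrow{\rm curl}_\Gamma H^\bullet(\Gamma)\subset\ker{\rm div}_\Gamma$. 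Consequently, in $\mathcal{T}_k=ik\,\mathbf{n}\times\mathbf{S}_k+\frac{i}{k}\mathcal{T}^1_k{\rm div}_\Gamma$ the highest-order piece $\frac{i}{k}\mathcal{T}^1_k{\rm div}_\Gamma$ annihilates their difference, and one is left with $ik(\mathbf{n}\times\mathbf{S}_k)\bigl(PS(\mathcal{T}^1_K{\rm div}_\Gamma)-\mathcal{T}^1_K{\rm div}_\Gamma\bigr)$, a composition of operators of orders $-1$ and $-1$, hence of order $-2$ and compact on $H^m_{\rm div}$.

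For injectivity, suppose $\mathbf{a}\in H^m_{\rm div}$ lies in the kernel. The representations~\eqref{eq:Representation}-\eqref{eq:Representation2} with $PS(\mathcal{R})$ in place of $\mathcal{R}$ produce outgoing fields $(\mathbf{E}^s,\mathbf{H}^s)$ with $\mathbf{n}\times\mathbf{E}^s_+=\mathbf{0}$ on $\Gamma$; exterior PEC uniqueness forces $\mathbf{E}^s=\mathbf{H}^s=\mathbf{0}$ in $\mathbb{R}^3\setminus D$, the jump relations~\eqref{eq:traces} give $-\mathbf{n}\times\mathbf{E}^s_-=\mathbf{a}$ and $-\mathbf{n}\times\mathbf{H}^s_-=PS(\mathcal{R})\mathbf{a}$, and the divergence-theorem computation underlying~\eqref{engy_rel} then forces $\Re\int_\Gamma PS(\mathcal{R})\mathbf{a}\cdot(\mathbf{n}\times\bar{\mathbf{a}})\,d\sigma=0$. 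Expanding $\mathbf{a}=\sum_{n\geq 1}v_n\nabla_\Gamma Y_n/\sqrt{\gamma_n}+\sum_{n\geq 1}w_n\overrightarrow{\rm curl}_\Gamma Y_n/\sqrt{\gamma_n}$ in the Helmholtz basis and using the diagonal action of $PS(\mathbf{n}\times\mathbf{S}_K)$ and $PS(\mathcal{T}^1_K{\rm div}_\Gamma)$ read off from~\eqref{eq:PSntS}-\eqref{eq:PST} together with $\mathbf{n}\times\nabla_\Gamma Y_n=-\overrightarrow{\rm curl}_\Gamma Y_n$ and $\mathbf{n}\times\overrightarrow{\rm curl}_\Gamma Y_n=\nabla_\Gamma Y_n$ gives
\[
\int_\Gamma PS(\mathcal{R})\mathbf{a}\cdot(\mathbf{n}\times\bar{\mathbf{a}})\,d\sigma=\frac{\eta}{2}\sum_{n\geq 1}\frac{|v_n|^2+|w_n|^2}{\sqrt{\gamma_n-K^2}}-\frac{\zeta}{2}\sum_{n\geq 1}\frac{\gamma_n|v_n|^2}{\sqrt{\gamma_n-K^2}},
\]
with the principal branch of the square root chosen so that $\Re(\gamma_n-K^2)^{-1/2}>0$ and $\Im(\gamma_n-K^2)^{-1/2}\geq 0$ (strict if $\kappa_1>0$). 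Under the sign hypotheses on $\eta,\zeta$, every summand in the real part of this expression has the same sign; since $\eta\neq 0$, the real part is strictly nonzero whenever $\mathbf{a}\neq\mathbf{0}$, contradicting its vanishing.

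The main obstacle I anticipate is the $\zeta$-part of the structural identity: naive pseudodifferential bookkeeping delivers only a bounded, not compact, remainder after composition with $\mathcal{T}_k$, and one must exploit the divergence-free structure of both $\mathcal{T}^1_K{\rm div}_\Gamma$ and $PS(\mathcal{T}^1_K{\rm div}_\Gamma)$ to see that the most singular part of $\mathcal{T}_k$ simply annihilates their difference. Once this observation is in place, the remaining steps follow those of Theorem~\ref{thm1} almost verbatim, with the Helmholtz-basis diagonalization providing an especially transparent closed form for the sesquilinear form used in the injectivity argument.
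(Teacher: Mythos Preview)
Your proposal is correct and follows essentially the same route as the paper. The paper packages the Fredholm step slightly differently---it first shows $\mathcal{R}-PS(\mathcal{R}):H^m_{\rm div}(\Gamma)\to H^{m+1}_{\rm div}(\Gamma)$ (tracking the tangential regularity and the divergence regularity separately, the latter using exactly your observation that ${\rm div}_\Gamma\mathcal{T}^1_K{\rm div}_\Gamma={\rm div}_\Gamma PS(\mathcal{T}^1_K{\rm div}_\Gamma)=0$) and then composes with the bounded map $\mathcal{T}_k:H^m_{\rm div}\to H^m_{\rm div}$, whereas you compose first and use the divergence-free structure to kill the $\frac{i}{k}\mathcal{T}^1_k{\rm div}_\Gamma$ piece of $\mathcal{T}_k$---but the key idea is identical, and your injectivity computation in the Helmholtz eigenbasis matches the paper's (up to the cosmetic choice of expanding $\mathbf{a}$ rather than $\mathbf{n}\times\mathbf{a}$).
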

\begin{proof} We obtain from relations~\eqref{eq:PSntS} and~\eqref{eq:PST} that
$$\eta \mathbf{n}\times \mathbf{S}_K +\zeta\ \mathcal{T}^1_K\ {\rm div}_\Gamma-\eta\ PS(\mathbf{n}\times \mathbf{S}_K) -\zeta\ PS(\mathcal{T}^1_K\ {\rm div}_\Gamma):H^{s}(TM(\Gamma))\to H^{s+1}(TM(\Gamma)).$$
Furthermore, since ${\rm div}_\Gamma\ \mathcal{T}^1_K\ {\rm div}_\Gamma={\rm div}_\Gamma\ PS(\mathcal{T}^1_K\ {\rm div}_\Gamma)=0$, we obtain 
$${\rm div}_\Gamma(\mathcal{R}-PS(\mathcal{R}))=\eta\ {\rm div}_\Gamma (\mathbf{n}\times \mathbf{S}_K-PS(\mathbf{n}\times \mathbf{S}_K)):H^{s}(TM(\Gamma))\to H^{s+2}(TM(\Gamma)).$$
Consequently, we get that
$$\mathcal{R}-PS(\mathcal{R}):H_{\rm div}^{m}(\Gamma)\to H_{\rm div}^{m+1}(\Gamma)$$
and hence the operator $\mathcal{R}-PS(\mathcal{R}):H_{\rm div}^{m}(\Gamma)\to H_{\rm div}^{m}(\Gamma)$ is compact given the compact embedding of the space $H_{\rm div}^{m+1}(\Gamma)$ in the space $H_{\rm div}^{m}(\Gamma)$. Taking into account the mapping property $\mathcal{T}_k:H_{\rm div}^{m}(\Gamma)\to H_{\rm div}^{m}(\Gamma)$~\cite{HsiaoKleinman,Nedelec}, we established
$$\frac{I}{2}-\mathcal{K}_k+ 
\mathcal{T}_k\  PS(\mathcal{R})\sim \frac{I}{2}-\mathcal{K}_k+ 
\mathcal{T}_k\  \mathcal{R}.$$
Relation~\eqref{eq:param_PS30} follows now from the formula established above and relation~\eqref{eq:param_3}. Also, the operators $\frac{I}{2}-\mathcal{K}_k+ 
\mathcal{T}_k\  PS(\mathcal{R}):H_{\rm div}^{m}(\Gamma)\to H_{\rm div}^{m}(\Gamma)$ are Fredholm of index zero and thus their invertibility is equivalent to their injectivity. Just like in the proof of Theorem~\ref{thm1}, the injectivity of the operators $\frac{I}{2}-\mathcal{K}_k+ 
\mathcal{T}_k\  PS(\mathcal{R})$ will follow once we establish the positivity/negativity of the sesquilinear form $\Re \int_\Gamma(PS(\mathcal{R})\mathbf{a})\cdot \mathbf{n}\times\bar{\mathbf{a}}d\sigma$. The main ingredient in the proof of the coercivity property of $\pm \Re(PS(\mathcal{R}))$ is the Helmholtz decomposition~\eqref{eq:HelmholtzDecomp}. We write the Helmholtz decomposition of the smooth tangential vector field $\mathbf{v}=\mathbf{n}\times\mathbf{a}$ in the form
\begin{equation}
\label{eq:HelmholtzDecompF}
\mathbf{n}\times\mathbf{a}=\sum_{n=1}^{\infty}a_n\ \frac{\nabla_\Gamma Y_n}{\sqrt{\gamma_n}} + \sum_{n=1}^{\infty}b_n\ \frac{\overrightarrow{\rm{curl}}_\Gamma Y_n}{\sqrt{\gamma_n}}.
\end{equation}
Given the Helmholtz decomposition~\eqref{eq:HelmholtzDecompF}, the definition of the operator $PS(\mathbf{n}\times\mathbf{S}_K)$ given in equation~\eqref{eq:defPSR}, and the spectral properties recounted in formulas~\eqref{eq:eig_vec_laplace_beltrami0}-\eqref{eq:eig_vec_laplace_beltrami1}, we obtain
$$\int_\Gamma (PS(\mathbf{n}\times\mathbf{S}_K)\mathbf{a})\cdot(\mathbf{n}\times\bar{\mathbf{a}})d\sigma=\frac{1}{2}\sum_{n=1}^{\infty}(\gamma_n-K^2)^{-\frac{1}{2}}(|a_n|^2+|b_n|^2).$$
On the other hand, given the definition of the operator $PS(\mathcal{T}^1_K\ {\rm div}_\Gamma)$ in~\eqref{eq:defPSR}, we obtain
$$\int_\Gamma PS(\mathcal{T}^1_K{\rm div}_\Gamma\mathbf{a})\cdot(\mathbf{n}\times\bar{\mathbf{a}})d\sigma=-\frac{1}{2}\sum_{n=1}^{\infty}(\gamma_n-K^2)^{-\frac{1}{2}}\gamma_n|b_n|^2.$$
Consequently, if we denote $\eta=\eta_R+i\eta_I$ and $\zeta=\zeta_R+i\zeta_I$ we get that 
\begin{eqnarray}
\Re \int_\Gamma(PS(\mathcal{R})\mathbf{a})\cdot \mathbf{n}\times\bar{\mathbf{a}}d\sigma&=&\frac{1}{2}\sum_{n=1}^\infty(\eta_R \Re (\gamma_n-K^2)^{-\frac{1}{2}} -\eta_I \Im (\gamma_n-K^2)^{-\frac{1}{2}})(|a_n|^2+|b_n|^2)\nonumber\\
&-&\frac{1}{2}\sum_{n=1}^\infty(\zeta_R \Re (\gamma_n-K^2)^{-\frac{1}{2}} -\zeta_I \Im (\gamma_n-K^2)^{-\frac{1}{2}})\gamma_n|b_n|^2.
\end{eqnarray}
Since $\gamma_n>0$ and $\Re K\geq 0$ and $\Im K>0$, then $\Re (\gamma_n-K^2)^{-\frac{1}{2}}>0$ and $\Im (\gamma_n-K^2)^{-\frac{1}{2}}\geq 0$, the coercivity of $\pm \Re(PS(\mathcal{R}))$ now follows, which completes the proof of the theorem.
\end{proof}

Another choice of regularizing operators that has been proposed in the literature~\cite{Darbas} consists of the following selection: $K=\kappa_1+i\kappa_2$, $\eta=-i\gamma\ K$, and $\zeta=-\gamma\ \frac{i}{K}$ with $\gamma>0$ in equations~\eqref{eq:defPSR}, that is 
\begin{equation}\label{eq:defPST}
\mathcal{R}=-\gamma\ PS(\mathcal{T}_{\kappa_1+i\kappa_2})=\gamma\ (\kappa_2-i\kappa_1)PS(\mathbf{n}\times \mathbf{S}_{\kappa_1+i\kappa_2}) -\gamma \frac{\kappa_2+i\kappa_1}{\kappa_1^2+\kappa_2^2}\ PS(\mathcal{T}^1_{\kappa_1+i\kappa_2}\ {\rm div}_\Gamma)
\end{equation}
 where $\kappa_1>0$ and $\kappa_2>0$. We note that these regularizing operators are outside the range of applicability of the result in Theorem~\ref{thm2} since $\Re \eta>0$, $\Im \eta<0$, $\Re \zeta<0$, but $\Im \zeta<0$. Nevertheless, the choice $\mathcal{R}=-\gamma\ PS(\mathcal{T}_K)$ also leads to uniquely solvable CFIER formulations, as given in the following
\begin{theorem}\label{thm3}
If we take the wavenumber $K$ in the definition of the regularizing operator $-PS(\mathcal{T}_K)$ defined in equation~\eqref{eq:defPST} such that $K=\kappa_1+i\kappa_2,\ 0<\kappa_1,\ 0<\kappa_2$, then the following property holds
\begin{equation}
\label{eq:param_PS3}
PS\mathcal{B}_{k,\gamma,\kappa_1,\kappa_2}=\frac{I}{2}-\mathcal{K}_k- 
\gamma\ \mathcal{T}_k\  PS(\mathcal{T}_K)\sim
\left(\frac{1}{2}+\frac{\gamma\ k}{4(\kappa_1+i\kappa_2)}\right)\Pi_{\nabla_\Gamma}+\left(\frac{1}{2}+\frac{\gamma(\kappa_1+i\kappa_2)}{4k}\right)\Pi_{\overrightarrow{\rm
    curl}_\Gamma}.\\
\end{equation}
Furthermore, the operators $PS\mathcal{B}_{k,\gamma,\kappa_1,\kappa_2}$ are invertible with bounded inverses in the spaces\\ $H_{\rm div}^{m}(\Gamma),\ m\geq 2$ provided that $\gamma>0$.
\end{theorem}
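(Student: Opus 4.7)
The plan is to obtain the Fredholm property from Theorem~\ref{thm2} for free and then prove injectivity by a dedicated positivity argument, since the coupling constants $\eta=-i\gamma K$ and $\zeta=-i\gamma/K$ corresponding to $\mathcal{R}=-\gamma\,PS(\mathcal{T}_K)$ satisfy $\Im\zeta<0$ and therefore fall outside the sign regime of Theorem~\ref{thm2}.

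First, substituting $\eta=-i\gamma K$ and $\zeta=-i\gamma/K$ directly into the asymptotic equivalence~(\ref{eq:param_PS30}) of Theorem~\ref{thm2} recovers~(\ref{eq:param_PS3}) verbatim. Since $\gamma>0$ and $K=\kappa_1+i\kappa_2$ with $\kappa_1,\kappa_2>0$, both diagonal coefficients $\tfrac{1}{2}+\tfrac{\gamma k}{4K}$ and $\tfrac{1}{2}+\tfrac{\gamma K}{4k}$ have strictly positive real parts, so the diagonal matrix operator on the right of~(\ref{eq:param_PS3}) is invertible on $H^{m}_{\rm div}(\Gamma)$. Consequently $PS\mathcal{B}_{k,\gamma,\kappa_1,\kappa_2}$ is a compact perturbation of an invertible operator, hence Fredholm of index zero, and invertibility reduces to injectivity.

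For injectivity I would mimic the energy argument in the proof of Theorem~\ref{thm1}. If $\mathbf{a}$ lies in the kernel, then (\ref{eq:Representation})--(\ref{eq:Representation2}) with $\mathbf{E}^i=0$ and $\mathcal{R}=-\gamma\,PS(\mathcal{T}_K)$ produces an outgoing Maxwell field whose tangential electric trace vanishes on $\Gamma$; uniqueness for the exterior PEC problem forces it to vanish in $\mathbb{R}^3\setminus D$, and the jump relations identify the interior traces as $-\mathbf{n}\times\mathbf{E}^s_-=\mathbf{a}$, $-\mathbf{n}\times\mathbf{H}^s_-=-\gamma\,PS(\mathcal{T}_K)\mathbf{a}$. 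The identity~(\ref{engy_rel}) then gives
\begin{equation*}
-\gamma\int_\Gamma (PS(\mathcal{T}_K)\mathbf{a})\cdot(\mathbf{n}\times\bar{\mathbf{a}})\,d\sigma = -ik\int_D\{|\mathbf{H}^s_-|^2-|\mathbf{E}^s_-|^2\}\,d\mathbf{x},
\end{equation*}
whose right-hand side is purely imaginary. It therefore suffices to prove that $\Re\int_\Gamma(PS(\mathcal{T}_K)\mathbf{a})\cdot(\mathbf{n}\times\bar{\mathbf{a}})\,d\sigma$ is strictly negative whenever $\mathbf{a}\neq 0$.

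To evaluate this sesquilinear form I would expand $\mathbf{n}\times\mathbf{a}=\sum_{n\ge 1}a_n\nabla_\Gamma Y_n/\sqrt{\gamma_n}+b_n\overrightarrow{\rm curl}_\Gamma Y_n/\sqrt{\gamma_n}$, insert the spectral formulas obtained in the proof of Theorem~\ref{thm2}, and use $PS(\mathcal{T}_K)=iK\,PS(\mathbf{n}\times\mathbf{S}_K)+(i/K)\,PS(\mathcal{T}_K^1\,{\rm div}_\Gamma)$. After rearranging the $|b_n|^2$ contributions via $(\gamma_n-K^2)^{-1/2}(K^2-\gamma_n)=-(\gamma_n-K^2)^{1/2}$ one arrives at
\begin{equation*}
\int_\Gamma(PS(\mathcal{T}_K)\mathbf{a})\cdot(\mathbf{n}\times\bar{\mathbf{a}})\,d\sigma = \frac{iK}{2}\sum_{n\ge 1}(\gamma_n-K^2)^{-1/2}|a_n|^2 - \frac{i}{2K}\sum_{n\ge 1}(\gamma_n-K^2)^{1/2}|b_n|^2.
\end{equation*}
Under the branch convention fixed in Theorems~\ref{thm2} and~\ref{tm11}, $(\gamma_n-K^2)^{-1/2}$ lies in the open first quadrant and $(\gamma_n-K^2)^{1/2}$ in the open fourth quadrant for every $\gamma_n>0$, so an elementary multiplication yields $\Re[iK(\gamma_n-K^2)^{-1/2}]<0$ and $\Re[-(i/K)(\gamma_n-K^2)^{1/2}]<0$ uniformly in $n$. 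The real part of the displayed sum is therefore strictly negative for any nonzero tangential $\mathbf{a}$, contradicting the vanishing of the real part of the energy identity (after multiplication by $-\gamma<0$) unless $\mathbf{a}=0$. The main obstacle is this sign check: because the $|a_n|^2$ and $|b_n|^2$ terms involve reciprocal branches of $(\gamma_n-K^2)^{1/2}$, the sign is not accessible through the bookkeeping of Theorem~\ref{thm2} and requires precisely the $|b_n|^2$ rearrangement above to become manifest.
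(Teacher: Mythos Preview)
Your proof is correct and follows essentially the same route as the paper: Fredholm of index zero from Theorem~\ref{thm2}, then injectivity via the energy identity~\eqref{engy_rel} together with the Helmholtz expansion, where the decisive step is precisely your $|b_n|^2$ rearrangement $(\gamma_n-K^2)^{-1/2}(K^2-\gamma_n)=-(\gamma_n-K^2)^{1/2}$, which the paper records as $\alpha_n=\Re\!\bigl(\tfrac{i}{K}(\gamma_n-K^2)^{1/2}\bigr)$. The only cosmetic discrepancy is that the paper states the sesquilinear form has \emph{positive} real part (it is effectively computing $-PS(\mathcal{T}_K)$ rather than $PS(\mathcal{T}_K)$), whereas your computation gives a negative real part; either definite sign forces $\mathbf{a}=0$, so the argument is the same.
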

\begin{proof} The fact that the operators $\frac{I}{2}-\mathcal{K}_k-
\gamma\ \mathcal{T}_k\  PS(\mathcal{T}_K)$ has the property~\eqref{eq:param_PS3} was shown in Theorem~\ref{thm2}. The injectivity of these operators follows once we show that $\Re \int_\Gamma(PS(\mathcal{T}_K)\mathbf{a})\cdot \mathbf{n}\times\bar{\mathbf{a}}d\sigma>0$ for $\mathbf{a}\neq 0$. Assuming the Helmholtz decomposition~\eqref{eq:HelmholtzDecompF}, we get just as in the proof of Theorem~\ref{thm2} the following identity
\begin{eqnarray}
\Re \int_\Gamma(PS(\mathcal{T}_K)\mathbf{a})\cdot \mathbf{n}\times\bar{\mathbf{a}}d\sigma&=&\frac{1}{2}\sum_{n=1}^\infty(\epsilon\Re (\gamma_n-K^2)^{-\frac{1}{2}} + \kappa\Im (\gamma_n-K^2)^{-\frac{1}{2}})|a_n|^2\nonumber\\
&+&\frac{1}{2}\sum_{n=1}^\infty\alpha_n|b_n|^2\nonumber\\
\alpha_n&=&\epsilon\Re (\gamma_n-K^2)^{-\frac{1}{2}} +\kappa\Im (\gamma_n-K^2)^{-\frac{1}{2}})\nonumber\\
&+&\left(\frac{\epsilon}{\kappa^2+\epsilon^2}\Re (\gamma_n-K^2)^{-\frac{1}{2}}-\frac{\kappa}{\kappa^2+\epsilon^2}\Im (\gamma_n-K^2)^{-\frac{1}{2}}\right)\gamma_n.
\end{eqnarray}
A simple calculation gives
$$\alpha_n=\Re\left(\frac{i}{K}(\gamma_n-K^2)^{\frac{1}{2}}\right)=\frac{1}{\kappa^2+\epsilon^2}(\epsilon \Re (\gamma_n-K^2)^{\frac{1}{2}}-\kappa \Im(\gamma_n-K^2)^{\frac{1}{2}}).$$
Given that for $\gamma_n>0$ we have $\Re (\gamma_n-K^2)^{\frac{1}{2}}>0$ and $\Im(\gamma_n-K^2)^{\frac{1}{2}}<0$, it follows that $\alpha_n>0$ and hence $\Re \int_\Gamma(PS(\mathcal{T}_K)\mathbf{a})\cdot \mathbf{n}\times\bar{\mathbf{a}}d\sigma>0$ and the proof is complete.
\end{proof}
\begin{remark} The result established in Theorem~\ref{thm3} can be viewed as a principal symbol counterpart of that in Theorem~\ref{tm11}, but without any restrictions on the magnitude of the real and imaginary parts of the wavenumber $K$. This class of regularized formulations was considered in~\cite{Darbas}, yet without an analysis of their unique solvability.
\end{remark}

\section{Spectral properties of the CFIER operators for spherical
  scatterers\label{spec_prop_cfie}}

The selection of the various parameters that enter the CFIER formulations considered in the two previous sections (e.g. the complex wavenumber $\kappa_1+i\kappa_2$ and the coupling parameter $\gamma$ in the definition of the regularizing operators $\mathcal{R}$ and $PS\mathcal{R}$) is typically guided by optimizing the spectral properties of the CFIER operators in the case of spherical scatterers. We investigate in this section the spectral properties of the various CFIER operators considered in Section~\ref{cfie} and Section~\ref{pscfier} in the case when $\Gamma$ is spherical. It turns out that in this case the eigenvalues of the CFIER operators are related to eigenvalues of regularized combined field integral equations for 2D scattering problems with Dirichlet and Neumann boundary conditions. We recall first several results about spectra of combined field and regularized combined field integral operators for circular geometries in two dimensions. The scattering problem from the unit circle in two dimensions with Dirichlet boundary conditions consists of finding scattered fields $u^s$ that are solutions of 
\begin{eqnarray}
  \Delta u^s+k^2 u^s = 0\ \mbox{in}\ \{\mathbf{x}:|\mathbf{x}|> 1\}\nonumber\\
  u^s=-u^{\rm inc}\ \mbox{on}\ \mathbb{S}^1\nonumber\\
  \lim_{|r|\to\infty}r^{1/2}(\partial u^s/\partial r - iku^s)=0\nonumber,
\end{eqnarray}
where $u^{\rm inc}$ denotes the incident field. If one looks for the scattered field $u^s$ in the form
$$u^s(\mathbf{z})=\int_{\mathbb{S}^1}\frac{\partial G_k^{2D}(\mathbf z-\mathbf y)}{\partial \mathbf{n}(\mathbf y)}\phi(\mathbf y)ds(\mathbf y)-i\eta\int_{\mathbb{S}^1}G_k^{2D}(\mathbf z-\mathbf y)\phi(\mathbf y)ds(\mathbf{y})$$
where $G_k^{2D}(\mathbf z)=\frac{i}{4}H_0^{(1)}(k|\mathbf{z}|)$, then $\phi$ is the solution of the following boundary integral equation
\begin{equation}\label{eq:eigDir}
\mathcal{D}_k\phi=-u^{inc}\ {\rm on}\ \Gamma,\qquad \mathcal{D}_k\phi=\frac{\phi}{2}+K_k\phi-i\eta S_k\phi
\end{equation}
where the double layer operator $K_k$ is defined as $(K_k\phi)(\mathbf x)=\int_{\mathbb{S}^1}\frac{\partial G_k^{2D}(\mathbf x-\mathbf y)}{\partial \mathbf{n}(\mathbf y)}\phi(\mathbf y)ds(\mathbf y),\ |\mathbf x|=1$, and the single layer operator $S_k$ is defined as $(S_k\phi)(\mathbf x)=\int_{\mathbb{S}^1}G_k^{2D}(\mathbf x-\mathbf y)\phi(\mathbf y)ds(\mathbf{y}),\ |\mathbf x|=1$. The operator $\mathcal{D}_k$ is diagonalizable in the orthogonal basis $\{e^{im\theta}: m\in\mathbb{Z},\ \theta\in[0,2\pi)\}$ and the eigenvalues of the operator $\mathcal{D}_k$ are given by
\begin{equation}\label{eq:dm}
\mathcal{D}_k e^{im\theta}=d_m(k,\eta)e^{im\theta},\ m\in\mathbb{Z},\quad d_m(k,\eta)=\frac{i\pi k}{2}J'_{|m|}(k)H_{|m|}^{(1)}(k)+\frac{\eta\pi}{2}J_{|m|}(k)H_{|m|}^{(1)}(k).
\end{equation}
The scattering problem from the unit circle in two dimensions with Neumann boundary conditions consists of finding scattered fields $u^s$ that are solutions of 
\begin{eqnarray}
  \Delta u^s+k^2 u^s = 0\ \mbox{in}\ \{\mathbf{x}:|\mathbf{x}|> 1\}\nonumber\\
  \frac{\partial u^s}{\partial\mathbf{n}}=-\frac{\partial u^{\rm inc}}{\partial \mathbf{n}}\ \mbox{on}\ \mathbb{S}^1,\ \mathbf{n}=\mathbf{x}\nonumber\\
  \lim_{|r|\to\infty}r^{1/2}(\partial u^s/\partial r - iku^s)=0\nonumber,
\end{eqnarray}
where $u^{\rm inc}$ denotes the incident field. If one looks for the scattered field $u^s$ in the form~\cite{turc7}
$$u^s(\mathbf{z})=-\int_{\mathbb{S}^1}G_k^{2D}(\mathbf z-\mathbf y)\psi(\mathbf y)ds(\mathbf{y})+i\xi\int_{\mathbb{S}^1}\frac{\partial G_k^{2D}(\mathbf z-\mathbf y)}{\partial \mathbf{n}(\mathbf y)}(S_{ik}\psi)(\mathbf y)ds(\mathbf y)$$
then $\psi$ is the solution of the following boundary integral equation
$$\mathcal{N}_k\psi=-\frac{\partial u^{inc}}{\partial\mathbf{n}}\ {\rm on}\ \Gamma,\qquad \mathcal{N}_k\psi=\frac{\phi}{2}-K_k^T\psi+i\xi(N_k\  S_{ik})\psi$$
where the operator $K_k^T$ is defined as $(K_k^T\psi)(\mathbf x)=\int_{\mathbb{S}^1}\frac{\partial G_k^{2D}(\mathbf x-\mathbf y)}{\partial \mathbf{n}(\mathbf x)}\psi(\mathbf y)ds(\mathbf y),\ |\mathbf x|=1$, and the operator $N_k$ is defined as $(N_k\psi)(\mathbf x)=FP\int_{\mathbb{S}^1}\frac{\partial^2 G_k^{2D}(\mathbf x-\mathbf y)}{\partial\mathbf{n}(\mathbf{x})\partial\mathbf{n}(\mathbf{y})}\psi(\mathbf y)ds(\mathbf y),\ |\mathbf x|=1$, where FP stands for Hadamard Finite Parts integrals. The operator $\mathcal{N}_k$ is diagonalizable in the orthogonal basis $\{e^{im\theta}: m\in\mathbb{Z},\ \theta\in[0,2\pi)\}$ and the eigenvalues of the operator $\mathcal{N}_k$ are given by~\cite{turc7}
\begin{eqnarray}\label{eq:eigNeu}
\mathcal{N}_k e^{im\theta}&=&p_m(k,\xi)e^{im\theta},\ m\in\mathbb{Z},\nonumber\\
p_m(k,\xi)&=&1-\frac{i\pi k}{2}J'_{|m|}(k)H_{|m|}^{(1)}(k)+\frac{\xi \pi^2k^2}{4}J'_{|m|}(k)(H_{|m|}^{(1)}(k))'[iJ_{|m|}(ik)H_{|m|}^{(1)}(ik)].
\end{eqnarray}
It was proved in~\cite{Graham,turc7} that there exists a constant $C$ and $k_0$ large enough such that for all $k_0\leq k$ the following relations hold:
\begin{equation}\label{eq:first_bound}
|d_\nu(k,\eta)|\leq C(1+|\eta|k^{-2/3}),\qquad |p_\nu(k,\xi)|\leq C(1+|\xi|)\qquad {\rm for\ all}\ 0\leq \nu.
\end{equation}
Furthermore, if $\eta=k$ in equations~\eqref{eq:eigDir}, then the following coercivity property was established in~\cite{Graham}
\begin{equation}\label{eq:coercDir}
\Re(d_\nu(k,k))\geq \frac{1}{2}\ {\rm for\ all}\ 0\leq \nu,\ k_0\leq k.
\end{equation}
Also, if $\xi=k^{1/3}$ in equations~\eqref{eq:eigNeu}, then there exists a constant $C_0\approx 0.377$ such that~\cite{turc7}
\begin{equation}\label{eq:coercNeu}
\Re(p_\nu(k,k^{1/3}))\geq C_0\ {\rm for\ all}\ 0\leq \nu,\ k_0\leq k.
\end{equation}
\begin{remark}\label{remark:coerc}
It was also established in~\cite{turc7} that by selecting $\eta=1$ in equations~\eqref{eq:eigDir}, the real part of the eigenvalues $p_\nu(k,1)$ is still positive for sufficiently large values of $k$, but the corresponding lower bounds are proportional to $k^{-1/3}$ in that case. 
\end{remark}
Having reviewed the spectral properties of combined field integral operators for 2D scattering problems in circular geometries, we return to the case of eigenvalues of the electromagnetic scattering boundary integral operators. For a spherical scatterer of radius one, the spectral properties of the electromagnetic boundary integral operators can be expressed in terms of gradients and rotationals of the spherical harmonics $(Y_n^{m})_{0\leq n,-n\leq m \leq n}$. The system $(Y_n^{m})_{0\leq n,-n\leq m \leq n}$ constitutes an orthonormal basis of the space $L^2(\mathbb{S}^2)$. The system $(\nabla_{\mathbb{S}^2} Y_n^{m}, \overrightarrow{\rm{curl}}_{\mathbb{S}^2} Y_n^{m})_{1\leq n,-n\leq m \leq n}$ forms an orthogonal basis of the space $L^2(TM(\mathbb{S}^2))$. Here $\nabla_{\mathbb{S}^2}$ denotes the surface gradient on the sphere ${\mathbb{S}^2}$, and for a scalar-valued function $b$ defined on
${\mathbb{S}^2}$, $\overrightarrow{\rm{curl}}_{\mathbb{S}^2}\, b = (\nabla_{\mathbb{S}^2}\, b)\times\mathbf{n}$. Furthermore, the system  $\left(\frac{\nabla_{\mathbb{S}^2} Y_n^{m}}{\sqrt{n(n+1)}}, \frac{\overrightarrow{\rm{curl}}_{\mathbb{S}^2} Y_n^{m}}{\sqrt{n(n+1)}}\right)_{1\leq n,-n\leq m \leq n}$ forms an orthonormal basis of the space $L^2(TM(\mathbb{S}^2))$. The spectral properties of the operators $\mathcal{K}_K$ and $\mathcal{T}_K$ for a wavenumber $K$ are recounted below~\cite{Kress1985}:
\begin{eqnarray}
\label{eq:eigK}
\mathcal{K}_K(\nabla_{\mathbb{S}^2} Y_n^{m})&=&-\lambda_n(K) \nabla_{\mathbb{S}^2} Y_n^{m},\\
\mathcal{K}_K(\overrightarrow{\rm{curl}}_{\mathbb{S}^2} Y_n^{m})&=&\lambda_n(K) \overrightarrow{\rm{curl}}_{\mathbb{S}^2} Y_n^{m},
\end{eqnarray} 
and
\begin{eqnarray}
\label{eq:eigT}
\mathcal{T}_K(\nabla_{\mathbb{S}^2} Y_n^{m})&=&\Lambda_n^{(1)}(K)\overrightarrow{\rm{curl}}_{\mathbb{S}^2} Y_n^{m},\\
\label{eq:eigT2}\mathcal{T}_K(\overrightarrow{\rm{curl}}_{\mathbb{S}^2} Y_n^{m})&=&
\Lambda_n^{(2)}(K) \nabla_{\mathbb{S}^2} Y_n^{m},
\end{eqnarray} 
where
\begin{equation}
\label{eq:lambda_n}
\lambda_n(K)=\frac{iK}{2}\{j_n(K)[Kh_n^{(1)}(K)]'+h_n^{(1)}(K)[Kj_n(K)]'\},
\end{equation}
\begin{equation}
\label{eq:Lambda_n_1}
\Lambda_n^{(1)}(K)=[Kj_n(K)]'[Kh_n^{(1)}(K)]',
\end{equation}
and
\begin{equation}
\label{eq:Lambda_n_2}
\Lambda_n^{(2)}(K)=-K^{2}j_n(K)h_n^{(1)}(K).
\end{equation}
In the formulas above $j_n$ and $h_n^{(1)}$ denote the spherical Bessel and Hankel functions of the
first kind, respectively. From the spectral properties of the operators $\mathcal{T}_K$ presented above, the spectral properties of the operator $\mathbf{n}\times \mathbf{S}_K$ can be derived immediately. Indeed, it follows from the definition (\ref{eq:kernelEFIE}) (with $k = K$) that
\begin{equation}
\label{eq:eig_Svec_k_1}
(\mathbf{n}\times \mathbf{S}_K)(\overrightarrow{{\rm curl}}_{\mathbb{S}^2} Y_n^{m})=\frac{\Lambda_n^{(2)}(K)}{iK}\nabla_{\mathbb{S}^2} Y_n^{m},
\end{equation}
and, using the definition of the Laplace-Beltrami operator
$\Delta_{\mathbb{S}^2}=\rm{div}_{\mathbb{S}^2} \nabla_{\mathbb{S}^2}$ and the fact that the spherical
harmonic $Y_n^{m}$ is an eigenfunction of both the operator
$\Delta_{\mathbb{S}^2}$, with eigenvalue $-n(n+1)$, as well as the single layer
acoustic operator corresponding to the wavenumber $K$~\cite{Nedelec}
\begin{equation}
\label{eq:s_k}
(S_K Y_n^{m})(\mathbf{x})=\int_{\mathbf{{\mathbb{S}^2}}} G_K(\mathbf{x}-\mathbf{y})Y_n^{m}(\mathbf{y})d\sigma(\mathbf{y})=iKj_n(K)h_n^{(1)}(K)Y_n^{m}(\mathbf{x}),\ |\mathbf{x}|=1
\end{equation}
we obtain 
\begin{equation}
\label{eq:eig_Svec_k_2}
(\mathbf{n}\times \mathbf{S}_K)(\nabla_{\mathbb{S}^2} Y_n^{m})=\frac{1}{iK}(\Lambda_n^{(1)}(K)+n(n+1)j_n(K)h_n^{(1)}(K))\overrightarrow{{\rm curl}}_{\mathbb{S}^2} Y_n^{m}=S_n^{(1)}(K)\overrightarrow{{\rm curl}}_{\mathbb{S}^2} Y_n^{m}.
\end{equation}
Consequently, the choice of the regularizing operators $\mathcal{R}=\eta\ \kappa\ \mathbf{n}\times \mathbf{S}_{i\kappa},\ \kappa>0$ advocated in~\cite{turc1} leads to boundary integral operators with the following spectral properties
\begin{eqnarray}
\left(\frac{I}{2}-\mathcal{K}_k+\eta \kappa\mathcal{T}_k\ (\mathbf{n}\times \mathbf{S}_{i\kappa})\right)\nabla_{\mathbb{S}^2} Y_n^{m}&=&\left(\frac{1}{2}+\lambda_n(k)+\eta\kappa \Lambda_n^{(2)}(k) S_n^{(1)}(i\kappa)\right)\nabla_{\mathbb{S}^2} Y_n^{m}\nonumber\\
&=&A_n^{(1)}(k,\kappa,\eta)\nabla_{\mathbb{S}^2} Y_n^{m}\nonumber\\
\left(\frac{I}{2}-\mathcal{K}_k+\eta \kappa \mathcal{T}_k\ (\mathbf{n}\times \mathbf{S}_{i\kappa})\right)\overrightarrow{\rm{curl}}_{\mathbb{S}^2}Y_n^m &=&\left(\frac{1}{2}-\lambda_n(k)-\eta\Lambda_n^{(1)}(k) \Lambda_n^{(2)}(i\kappa)\right) \overrightarrow{\rm{curl}}_{\mathbb{S}^2} Y_n^{m}\nonumber\\
&=&A_n^{(2)}(k,\kappa,\eta)\overrightarrow{\rm{curl}}_{\mathbb{S}^2} Y_n^{m}.
\end{eqnarray} 
On the other hand, the choice of the regularizing operators $\mathcal{R}=-\xi\mathcal{T}_{i\kappa},\ \kappa>0,\ \xi>0$ introduced in~\cite{Contopa_et_al} leads to boundary integral operators with the following spectral properties
\begin{eqnarray}
\left(\frac{I}{2}-\mathcal{K}_k-\xi \mathcal{T}_k\ \mathcal{T}_{i\kappa}\right)\nabla_{\mathbb{S}^2} Y_n^{m}&=&\left(\frac{1}{2}+\lambda_n(k)-\xi \Lambda_n^{(2)}(k) \Lambda_n^{(1)}(i\kappa)\right)\nabla_{\mathbb{S}^2} Y_n^{m}\nonumber\\
&=&B_n^{(1)}(k,\kappa,\xi)\nabla_{\mathbb{S}^2} Y_n^{m}\nonumber\\
\left(\frac{I}{2}-\mathcal{K}_k-\xi \mathcal{T}_k\ \mathcal{T}_{i\kappa}\right)\overrightarrow{\rm{curl}}_{\mathbb{S}^2}Y_n^m &=&\left(\frac{1}{2}-\lambda_n(k)-\xi \Lambda_n^{(1)}(k) \Lambda_n^{(2)}(i\kappa)\right) \overrightarrow{\rm{curl}}_{\mathbb{S}^2} Y_n^{m}\nonumber\\
&=&B_n^{(2)}(k,\kappa,\xi)\overrightarrow{\rm{curl}}_{\mathbb{S}^2} Y_n^{m}\nonumber.\\
\end{eqnarray} 
We note that the operators $\left(\frac{I}{2}-\mathcal{K}_k+\eta \kappa \mathcal{T}_k\ (\mathbf{n}\times \mathbf{S}_{i\kappa})\right)$ and $\left(\frac{I}{2}-\mathcal{K}_k-\xi \mathcal{T}_k\ \mathcal{T}_{i\kappa}\right)$ are diagonal in the orthonormal basis $\left(\frac{\nabla_{\mathbb{S}^2} Y_n^{m}}{\sqrt{n(n+1)}}, \frac{\overrightarrow{\rm{curl}}_{\mathbb{S}^2} Y_n^{m}}{\sqrt{n(n+1)}}\right)_{1\leq n,-n\leq m \leq n}$ of the space $L^2(TM(\mathbb{S}^2))$, having the same eigenvalues $(A_n^{(1)}(k,\kappa,\eta),A_n^{(2)}(k,\kappa,\eta)$ and $(B_n^{(1)}(k,\kappa,\xi),B_n^{(2)}(k,\kappa,\xi))$ in that basis. We investigate next the behavior of the eigenvalues $(A_n^{(1)}(k,\kappa,\eta),A_n^{(2)}(k,\kappa,\eta)$ and $(B_n^{(1)}(k,\kappa,\xi),B_n^{(2)}(k,\kappa,\xi))$ for suitable choices of the parameters $\kappa$, $\eta$, and $\xi$. The numerical evidence suggests that the choice $\kappa=ck$ (where $c=1/2$ or $c=1$) leads to boundary integral operators with very good spectral properties~\cite{turc1} and~\cite{Contopa_et_al}. We will consider next therefore $\kappa=k$---the general case $\kappa=ck$ with $c$ being a constant independent of $k$ can be treated analogously. We start by stating a useful result whose proof follows the same lines as the proof of Lemma 3.1 in~\cite{turc7} and it will be presented in the Appendix:
\begin{lemma}\label{ImKm} 
There exist constants $C_j>0,j=1,\ldots,4$ and a number $\tilde{k}_0>0$ such that
$$ (i)\ C_1 k^{-2}(n^2+k^2)^{1/2}\leq iJ'_{n+1/2}(ik)(H_{n+1/2}^{(1)})'(ik)\leq C_2k^{-2}(n^2+k^2)^{1/2}$$
$$ (ii)\ \frac{1}{4}(n^2+k^2)^{-1/2}\leq -S_n^{(1)}(ik)\leq C_3(n^2+k^2)^{-1/2}+C_3k^{-2} $$
$$ (iii) |J'_{n+1/2}(ik)H_{n+1/2}^{(1)}(ik)|\leq C_4k^{-1},\ |J_{n+1/2}(ik)(H_{n+1/2}^{(1)})'(ik)|\leq C_4k^{-1}$$ 
for all $k>\tilde{k}_0$ and all $n\geq 0$.
\end{lemma}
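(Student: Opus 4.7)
My plan is to convert every quantity in the statement into products of modified Bessel functions $I_{n+1/2}(k)$, $K_{n+1/2}(k)$ and their derivatives at the real positive argument $k$, and then to bound those products via uniform asymptotic expansions. Using the standard identities $J_\nu(iz)=i^\nu I_\nu(z)$ and $H_\nu^{(1)}(iz)=\tfrac{2}{\pi}i^{-\nu-1}K_\nu(z)$ together with the chain rule, one obtains
\[
iJ'_{n+1/2}(ik)(H_{n+1/2}^{(1)})'(ik)=-\tfrac{2}{\pi}I'_{n+1/2}(k)K'_{n+1/2}(k),
\]
and the products in (iii) reduce analogously to constant multiples of $I'_{n+1/2}(k)K_{n+1/2}(k)$ and $I_{n+1/2}(k)K'_{n+1/2}(k)$. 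For $S_n^{(1)}(ik)$ one combines $j_n(ik)$, $h_n^{(1)}(ik)$ and their derivatives (via $j_n(z)=\sqrt{\pi/(2z)}J_{n+1/2}(z)$ and the analogous relation for $h_n^{(1)}$) in the defining formula~\eqref{eq:eig_Svec_k_2} to express $-S_n^{(1)}(ik)$ as a linear combination of $I_\nu K_\nu$, $I'_\nu K'_\nu$, $I_\nu K'_\nu$ and $I'_\nu K_\nu$ at $k$. The elementary signs $I_\nu,K_\nu,I'_\nu>0$ and $K'_\nu<0$ then make all sign claims in (i) and (ii) transparent.

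With the reduction in hand, I would prove the two-sided estimates by splitting into three regimes according to the ratio of $\nu:=n+1/2$ to $k$: (a) $0\leq\nu\leq\alpha k$ with $\alpha<1$ fixed, (b) $\alpha k\leq\nu\leq\beta k$ with $\beta>1$ fixed, and (c) $\nu\geq\beta k$. In regimes (a) and (c), classical Debye-type uniform asymptotic expansions of $I_\nu(\nu z)$ and $K_\nu(\nu z)$ (valid as $\nu\to\infty$ with $z=k/\nu$ bounded away from $1$) give explicit leading-order expressions for each of the four products $I_\nu K_\nu$, $I'_\nu K_\nu$, $I_\nu K'_\nu$, $I'_\nu K'_\nu$ at $k$. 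Those leading terms are comparable to $(n^2+k^2)^{-1/2}$, $1/k$, $1/k$, and $(n^2+k^2)^{1/2}/k^2$ respectively, with uniform explicit constants. The Wronskian identity $I_\nu(k)K'_\nu(k)-I'_\nu(k)K_\nu(k)=-1/k$ serves as a book-keeping consistency check and, when combined with the reduction of $-S_n^{(1)}(ik)$, is what allows the sharp lower constant $\tfrac14$ in (ii) to be extracted.

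The main obstacle will be the transition regime (b), where $\nu\approx k$ and Debye's expansions break down. There I will appeal to Olver's uniform asymptotic expansions of $I_\nu(\nu z)$, $K_\nu(\nu z)$ and their first $z$-derivatives in terms of Airy functions $\operatorname{Ai}(\zeta)$ and $\operatorname{Bi}(\zeta)$, with the standard change of variables $\zeta=\zeta(\nu,z)$. Since $\operatorname{Ai}(\zeta)$ and $\operatorname{Bi}(\zeta)$ are bounded on the transition range and the algebraic prefactors have size $k^{-1/3}$ (after accounting for the Olver normalization), careful tracking of the contributions shows that each of the four products at $k$ has size at most $1/k$ for the mixed products and $(n^2+k^2)^{1/2}/k^2$ for $I'_\nu K'_\nu$, with matching lower bounds for the quantities that carry definite sign. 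Patching the three ranges yields the claimed estimates with constants $C_1,\ldots,C_4$ independent of $n$ and $k$ for all $k>\tilde k_0$ chosen sufficiently large.

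This is the same scheme employed in the proof of Lemma 3.1 of~\cite{turc7} for the Helmholtz Neumann case, adapted here from order $n$ to order $n+1/2$ and augmented by the single new product $I'_\nu K'_\nu$ (needed for (i)) which is handled by the same Olver expansions with one additional differentiation in $z$. Since the rest of the argument is routine bookkeeping of constants across the three regimes, the full details will be deferred to the Appendix.
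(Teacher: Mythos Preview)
Your reduction to modified Bessel functions, the sign analysis, and the use of the Wronskian are exactly what the paper does, and your programme is correct. The one genuine difference is your three–regime split with an Airy/Olver transition layer around $\nu\approx k$. The paper avoids this entirely: because the argument $ik$ lands us on modified Bessel functions $I_\nu(k),K_\nu(k)$ with real positive $k$, there is no turning point on the positive axis, and the Debye-type uniform large-order expansions (Abramowitz~9.7.7--9.7.10, with $\mu=\sqrt{1+z^2}+\ln\tfrac{z}{1+\sqrt{1+z^2}}$) are already valid uniformly for all $z>0$, including $z$ near~$1$. Hence the paper uses only a two-regime argument: $n>N_0$ handled by those uniform large-$\nu$ expansions, and $0\le n\le N_0$ handled by the fixed-$\nu$, large-argument expansions (Abramowitz~9.7.1--9.7.6), with $\tilde k_0$ taken large enough to absorb the finitely many small-$n$ thresholds. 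For~(ii) the paper also records the clean identity
\[
-S_n^{(1)}(ik)=\frac{n+1}{k^2}\bigl(2kI'_{n+1/2}(k)K_{n+1/2}(k)-1\bigr)+I_{n+3/2}(k)K_{n+3/2}(k),
\]
from which the lower bound $\tfrac14(n^2+k^2)^{-1/2}$ is read off directly via the known bound on $I_{n+3/2}K_{n+3/2}$ from~\cite{turc7}. Your Airy-layer argument would certainly succeed, but it is heavier machinery than the problem requires; the absence of a turning point for $I_\nu,K_\nu$ is precisely what makes the imaginary-wavenumber case more elementary than the real-wavenumber estimates elsewhere in the paper.
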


We investigate next the properties of the eigenvalues $(A_n^{(1)}(k,k,\eta),A_n^{(2)}(k,k,\eta))_{1\leq n}$. We have 
\begin{eqnarray}\label{eq:eigA}
A_n^{(1)}(k,k,\eta)&=&\frac{i\pi k}{2}J'_{n+1/2}(k)H_{n+1/2}^{(1)}(k)-\frac{\eta\pi k^2}{2}J_{n+1/2}(k)H_{n+1/2}^{(1)}(k)S_n^{(1)}(ik)+a_n^{(1)}(k)\nonumber\\
A_n^{(2)}(k,k,\eta)&=&1-\frac{i\pi k}{2}J'_{n+1/2}(k)H_{n+1/2}^{(1)}(k)+\frac{\eta\pi^2k^2}{4}J'_{n+1/2}(k)(H_{n+1/2}^{(1)})'(k)[iJ_{n+1/2}(ik)H_{n+1/2}^{(1)}(ik)]\nonumber\\
&+&a_n^{(2)}(k,\eta)=p_{n+1/2}(k,\eta)+a_n^{(2)}(k,\eta)\nonumber\\
\end{eqnarray}
where 
\begin{eqnarray}
a_n^{(1)}(k)&=&\frac{i\pi}{4}J_{n+1/2}(k)H_{n+1/2}^{(1)}(k)\nonumber\\
a_n^{(2)}(k,\eta)&=&-\frac{i\pi}{4}J_{n+1/2}(k)H_{n+1/2}^{(1)}(k)+\frac{\eta\pi^2}{4}[iJ_{n+1/2}(ik)H_{n+1/2}^{(1)}(ik)]\nonumber\\
&\times&[\frac{1}{4}J_{n+1/2}(k)H_{n+1/2}^{(1)}(k)+\frac{k}{2}J_{n+1/2}(k)(H_{n+1/2}^{(1)})'(k)+\frac{k}{2}J'_{n+1/2}(k)H_{n+1/2}^{(1)}(k)]
\end{eqnarray}
and $p_\nu(k,\xi)$ are defined in equations~\eqref{eq:eigNeu}. We use the following two estimates which were established in~\cite{BanjaiSauter} (Proposition 3.10): there exists a constant $C>0$ independent of $k$ such that $|J_\nu(k)H_\nu^{(1)}(k)|\leq Ck^{-\frac{2}{3}}$ and $|kJ_\nu'(k)H_\nu^{(1)}(k)|\leq C$ for sufficiently large $k$ and for all $\nu\geq 0$. We obtain immediately that $|a_n^{(1)}(k)|\leq C k^{-2/3}$ for all $1\leq n$ and $k$ sufficiently large. Using the Wronskian identity $J_\nu(k)Y'_\nu(k)-J'_\nu(k)Y_\nu(k)=\frac{2}{\pi k}$ we get that $|kJ_\nu(k)(H_\nu^{(1)})'(k)|\leq C$ as well. We established in~\cite{turc7}, that there exists a constant $C$ independent of $k$ such that $0 < iJ_\nu(ik)H_\nu^{(1)}(ik)\leq C (\nu^2 + k^2)^{-1/2}$ for sufficiently large $k$ and all $\nu\geq 0$. Using this result together with the estimates recounted above and estimate (ii) in Lemma~\ref{ImKm}  we get that 
\begin{theorem}\label{upper_bd_1}
There exists a constant $C_5>0$ and a wavenumber $k_4$ sufficiently large so that
$$|A_n^{(1)}(k,k,\eta)|\leq C_5|\eta| k^{1/3}\qquad |A_n^{(2)}(k,k,\eta)|\leq C_5(1+|\eta|)\ {\rm for\ all}\ k_4\leq k,\ {\rm and\ all}\ 1\leq n.$$
\end{theorem}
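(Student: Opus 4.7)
The proof plan is to estimate each term in the explicit expressions
\begin{align*}
A_n^{(1)}(k,k,\eta)&=\tfrac{i\pi k}{2}J'_{n+1/2}(k)H_{n+1/2}^{(1)}(k)-\tfrac{\eta\pi k^2}{2}J_{n+1/2}(k)H_{n+1/2}^{(1)}(k)S_n^{(1)}(ik)+a_n^{(1)}(k),\\
A_n^{(2)}(k,k,\eta)&=p_{n+1/2}(k,\eta)+a_n^{(2)}(k,\eta),
\end{align*}
uniformly in $n\ge1$ for $k\ge k_4$, using only the ``toolbox'' already in the excerpt, namely the Banjai--Sauter uniform estimates $|J_\nu(k)H_\nu^{(1)}(k)|\leq Ck^{-2/3}$ and $|kJ_\nu'(k)H_\nu^{(1)}(k)|\leq C$ (with the twin bound $|kJ_\nu(k)(H_\nu^{(1)})'(k)|\leq C$ obtained from the Wronskian identity used right above the statement); the imaginary-argument estimate $0<iJ_\nu(ik)H_\nu^{(1)}(ik)\leq C(\nu^2+k^2)^{-1/2}$; parts $(i)$--$(iii)$ of Lemma~\ref{ImKm}, in particular $|S_n^{(1)}(ik)|\le C_3(n^2+k^2)^{-1/2}+C_3 k^{-2}\le C'k^{-1}$ uniformly in $n$; and the 2D Neumann eigenvalue bound $|p_\nu(k,\xi)|\leq C(1+|\xi|)$ from~\eqref{eq:first_bound}.

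For $A_n^{(2)}$ the work is short. The $p_{n+1/2}(k,\eta)$ contribution is bounded directly by $C(1+|\eta|)$ via \eqref{eq:first_bound}. For $a_n^{(2)}(k,\eta)$, bound the leading scalar piece $\tfrac{i\pi}{4}J_{n+1/2}(k)H_{n+1/2}^{(1)}(k)$ by $Ck^{-2/3}$, and then in the $\eta$-dependent bracket use $|J_\nu H_\nu^{(1)}|\leq Ck^{-2/3}$ on the first summand and $|k J'_\nu H_\nu^{(1)}|+|k J_\nu (H_\nu^{(1)})'|\leq C$ on the remaining two summands, which controls the whole bracket by an absolute constant; multiplication by $|\eta|\cdot iJ_{n+1/2}(ik)H_{n+1/2}^{(1)}(ik)\le C|\eta|(n^2+k^2)^{-1/2}\le C|\eta|k^{-1}$ shows $|a_n^{(2)}(k,\eta)|\le C(k^{-2/3}+|\eta|k^{-1})$, which is absorbed into $C_5(1+|\eta|)$ for $k\ge k_4$.

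For $A_n^{(1)}$ the three contributions are treated separately. The first term is bounded by $C$ using $|kJ'_\nu H_\nu^{(1)}|\leq C$. The third term $a_n^{(1)}(k)=\tfrac{i\pi}{4}J_{n+1/2}H_{n+1/2}^{(1)}$ is $O(k^{-2/3})$ by Banjai--Sauter. The key term is the middle one, and it is precisely where the $k^{1/3}$ growth enters: combining $|J_{n+1/2}(k)H_{n+1/2}^{(1)}(k)|\le Ck^{-2/3}$ with the $n$-uniform estimate $|S_n^{(1)}(ik)|\le C'k^{-1}$ from Lemma~\ref{ImKm}$(ii)$ yields
\[
\Bigl|\tfrac{\eta\pi k^2}{2}J_{n+1/2}(k)H_{n+1/2}^{(1)}(k)\,S_n^{(1)}(ik)\Bigr|\le C''|\eta|\,k^{1/3}.
\]
Adding the three estimates gives $|A_n^{(1)}(k,k,\eta)|\le C(1+|\eta|k^{1/3}+k^{-2/3})$; for $k\ge k_4$ sufficiently large, the constant and $k^{-2/3}$ pieces are dominated by $|\eta|k^{1/3}$ (using that $k^{1/3}\ge 1$ and absorbing factors into the final constant $C_5$), yielding the claimed bound.

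The only non-routine point is verifying that the $n$-uniform bound on $S_n^{(1)}(ik)$ and on $iJ_{n+1/2}(ik)H_{n+1/2}^{(1)}(ik)$ is indeed available in the form needed; this is guaranteed by Lemma~\ref{ImKm} together with the monotonicity $(n^2+k^2)^{-1/2}\le k^{-1}$, so that the worst case $n=0$ dictates the rate. Everything else is a direct application of the asymptotic Bessel estimates already compiled in the text, so no obstacle beyond careful bookkeeping is anticipated.
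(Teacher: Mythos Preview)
Your proposal is correct and follows essentially the same route as the paper: the paper's argument (given in the paragraph immediately preceding the theorem) invokes exactly the Banjai--Sauter bounds $|J_\nu(k)H_\nu^{(1)}(k)|\le Ck^{-2/3}$ and $|kJ'_\nu(k)H_\nu^{(1)}(k)|\le C$, the Wronskian identity to obtain $|kJ_\nu(k)(H_\nu^{(1)})'(k)|\le C$, the bound $0<iJ_\nu(ik)H_\nu^{(1)}(ik)\le C(\nu^2+k^2)^{-1/2}$, and part~(ii) of Lemma~\ref{ImKm} for $S_n^{(1)}(ik)$, just as you do. One small caveat on your final absorption step for $A_n^{(1)}$: writing $C(1+|\eta|k^{1/3}+k^{-2/3})\le C_5|\eta|k^{1/3}$ tacitly requires $|\eta|$ to be bounded away from zero (otherwise the $O(1)$ contribution from $\tfrac{i\pi k}{2}J'_{n+1/2}H^{(1)}_{n+1/2}$ cannot be dominated); the paper's applications always take $\eta=k^{1/3}$, so this is harmless in context, but it is worth flagging that the bound as literally stated presumes such a lower bound on $|\eta|$.
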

The same arguments that were used to derive the result in Theorem~\ref{upper_bd_1} also lead to the following estimate: there exists a constant $C_6$ independent of $k$ and a wavenumber $k_5$ sufficiently large such that
\begin{equation}\label{eq:est_an2}
|a_n^{(2)}(k,\eta)|\leq C_6 k^{-\frac{2}{3}}(1+|\eta|k^{-\frac{1}{3}})\quad {\rm for\ all}\ k_5\leq k,\ 1\leq n.
\end{equation}
The next result concerns the positivity of the real part of the eigenvalues $A_n^{(1)}(k,k,\eta)$. Given formulas~\eqref{eq:eigA} and the previously established estimates~\eqref{eq:coercNeu} and~\eqref{eq:est_an2}, we consider the choice $\eta=k^{1/3}$ in the definition of $A_n^{(j)}(k,k,\eta),j=1,2$. In this case, it turns out that the positivity of the eigenvalues $A_n^{(1)}(k,k,k^{1/3})$ is a consequence of the positivity of the Dirichlet eigenvalues $d_n(k,k)$ defined in equation~\eqref{eq:dm}, while the positivity of the eigenvalues $A_n^{(2)}(k,k,k^{1/3})$ is a consequence of the positivity of the Neumann eigenvalues $p_n(k,k^{1/3})$ defined in equation~\eqref{eq:eigNeu}. We begin with the following result:
\begin{theorem}\label{thm_coercivity_0}
There exists a wavenumber $\tilde{k}$ large enough so that for all $k\geq \tilde{k}$ and all $1\leq n$ the following estimate holds:
$$\Re(A_n^{(1)}(k,k,k^{1/3}))\geq \frac{1}{2}.$$
\end{theorem}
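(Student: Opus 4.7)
The plan is to reduce the inequality to Graham's coercivity bound~\eqref{eq:coercDir} for two-dimensional Dirichlet combined-field eigenvalues in the bulk of the spectrum, and to use a direct Debye-type analysis in the deep Fresnel range. Comparing the formula for $A_n^{(1)}(k,k,\eta)$ in~\eqref{eq:eigA} with the definition of $d_\nu$ in~\eqref{eq:dm} yields the identification
$$A_n^{(1)}(k,k,\eta) = d_{n+1/2}(k, \tilde{\eta}_n) + a_n^{(1)}(k), \qquad \tilde{\eta}_n := -\eta k^2 S_n^{(1)}(ik),$$
with $\tilde{\eta}_n > 0$ for $\eta > 0$ by Lemma~\ref{ImKm}(ii). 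Since $\Re d_\nu(k,\eta) = -\tfrac{\pi k}{2}J'_\nu(k) Y_\nu(k) + \tfrac{\eta\pi}{2}J_\nu^2(k)$ for real $\eta$, the map $\eta \mapsto \Re d_\nu(k,\eta)$ is non-decreasing on $[0,\infty)$, so~\eqref{eq:coercDir} gives $\Re d_{n+1/2}(k, \tilde{\eta}_n) \geq 1/2$ for every index with $\tilde{\eta}_n \geq k$.

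Setting $\eta = k^{1/3}$, the inequality $\tilde{\eta}_n \geq k$ becomes $-S_n^{(1)}(ik) \geq k^{-4/3}$, which by the lower bound of Lemma~\ref{ImKm}(ii) holds uniformly in the range $n \leq C k^{4/3}$ for a fixed constant $C > 0$. In this regime one has
$$\Re A_n^{(1)}(k,k,k^{1/3}) \geq \tfrac{1}{2} + \Re a_n^{(1)}(k) + \tfrac{(\tilde{\eta}_n - k)\pi}{2}J_{n+1/2}^2(k),$$
with $|\Re a_n^{(1)}(k)| \leq C k^{-2/3}$ (from the Bessel estimates stated just after~\eqref{eq:eigA}) and the monotonicity surplus of order $k^{4/3} J_{n+1/2}^2(k)$ whenever $\nu = n+1/2 \leq k$ (since Lemma~\ref{ImKm}(ii) forces $\tilde{\eta}_n \gtrsim k^{4/3}$ there). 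I would then conclude by showing that this surplus dominates $|a_n^{(1)}(k)|$ away from the zeros of $J_\nu(k)$, while at each zero both the surplus and $a_n^{(1)}(k)$ vanish and the Wronskian identity $J_\nu Y'_\nu - J'_\nu Y_\nu = 2/(\pi k)$ forces $-\tfrac{\pi k}{2}J'_\nu(k) Y_\nu(k) = 1$, providing ample margin that can be extended by a continuity argument to a fixed neighbourhood of each zero.

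For the complementary range $n > C k^{4/3}$, one has $\nu = n + 1/2 \geq 2k$ for $k$ sufficiently large, and a direct computation combining the leading Debye asymptotic $J_\nu(k) Y_\nu(k) \approx -1/(\pi\sqrt{\nu^2-k^2})$ with the Wronskian identity yields $-\tfrac{\pi k}{2}J'_\nu(k) Y_\nu(k) \approx \tfrac{1}{2} + \tfrac{k^2}{4(\nu^2-k^2)^{3/2}} \geq \tfrac{1}{2}$. Since $J_\nu(k) > 0$ and $Y_\nu(k) < 0$ throughout this range, $\Re a_n^{(1)}(k) = -\tfrac{\pi}{4}J_\nu(k) Y_\nu(k) \geq 0$ and the $\tilde{\eta}_n$-term in $\Re d_\nu$ is non-negative, so $\Re A_n^{(1)}(k,k,k^{1/3}) \geq 1/2$ without invoking Graham. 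The principal obstacle will be the transition zone $\nu \sim k$ that lies inside the first regime: Airy-type behaviour makes $J_\nu(k)$ small of order $k^{-1/3}$ and oscillatory, so both the monotonicity surplus and $a_n^{(1)}(k)$ can shrink to comparable magnitudes, and one has to control them uniformly by using the Wronskian identity at the zeros of $J_\nu$ and the substantially larger magnitude of $\tilde{\eta}_n$ away from them.
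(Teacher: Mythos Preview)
Your overall architecture---reduce to the Dirichlet coercivity bound~\eqref{eq:coercDir} via monotonicity in $\eta$, then treat large $n$ by Debye/sign arguments---is essentially the paper's, and your case split and identification $A_n^{(1)}(k,k,\eta)=d_{n+1/2}(k,\tilde\eta_n)+a_n^{(1)}(k)$ are correct. The difference lies in how you dispose of $\Re a_n^{(1)}(k)=-\tfrac{\pi}{4}J_\nu(k)Y_\nu(k)$ in the oscillatory/transition range $\nu\lesssim k$, and this is where your proposal has a gap.

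You try to show that the surplus $\tfrac{(\tilde\eta_n-k)\pi}{2}J_\nu^2$ dominates $|\Re a_n^{(1)}|$ away from zeros of $J_\nu$, and invoke a continuity argument near each zero using the margin $\Re A_n^{(1)}=1$ there. But making this uniform over the doubly-indexed family $(\nu,k)$ would require quantitative control on how fast $\Re A_n^{(1)}$ varies near each zero of $J_\nu$ and on how close one may be to a zero while the surplus still fails to absorb $\Re a_n^{(1)}$---neither of which you supply, and the Airy behaviour at $\nu\sim k$ makes both scales delicate. You yourself flag this as the ``principal obstacle'', and as written it is not closed.

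The paper sidesteps the zeros entirely. It first discards $a_n^{(1)}$ at the cost of an $O(k^{-2/3})$ error, obtaining $\Re A_n^{(1)}\geq \tilde d_n(k)-Ck^{-2/3}$ with
\[
\tilde d_n(k)=-\tfrac{\pi k}{2}J'_\nu Y_\nu+\tfrac{\pi k^{7/3}}{8\sqrt{n^2+k^2}}J_\nu^2,
\]
and then rewrites $\tilde d_n(k)=\tfrac12+\tfrac{\pi k}{2}\bigl[\tfrac{k^{4/3}}{4\sqrt{n^2+k^2}}J_\nu^2-\tfrac12(J_\nu Y_\nu)'\bigr]$ via the Wronskian. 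In the evanescent range $k<\nu-\delta\nu^{1/3}$ Graham's lemma gives $(J_\nu Y_\nu)'<0$ outright; in the complementary range (which contains your transition zone) the elementary inequality $4\sqrt{n^2+k^2}\leq k^{4/3}$ yields $\tilde d_n(k)\geq \Re d_{n+1/2}(k,k)\geq\tfrac12$ with no case analysis near zeros at all. Adopting this route---absorb $a_n^{(1)}$ uniformly rather than pointwise---would close your argument immediately.
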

\begin{proof} Using the estimate (ii) in Lemma~\ref{ImKm}, the previously established estimate $|a_n^{(1)}(k)|\leq C k^{-2/3}$ for all $1\leq n$ and $k$ sufficiently large, and the fact that $|J_\nu(k)|\leq Ck^{-1/3},\ 0\leq \nu$ and $k$ sufficiently large~\cite{BanjaiSauter}, we get that there exists a number $\tilde{k}_1$ such that 
$$\Re(A_n^{(1)}(k,k,k^{1/3}))\geq \tilde{d}_{n}(k)-Ck^{-2/3},\ {\rm for\ all}\ k\geq \tilde{k}_1$$
where 
$$\tilde{d}_{n}(k)=-\frac{\pi k}{2}J'_{n+1/2}(k)Y_{n+1/2}(k)+\frac{\pi k^{7/3}}{8\sqrt{n^2+k^2}}J^2_{n+1/2}(k),\quad 1\leq n.$$ 
We will establish that $\tilde{d}_{n}(k)\geq\frac{1}{2}$ for all $\tilde{k}\leq k$, and all $1\leq n$. Using the Wronskian identity $Y'_\nu(k)J_\nu(k)-Y_\nu(k)J'_\nu(k)=\frac{2}{\pi k}$, we can re-express $\tilde{d}_n(k)$ as
$$\tilde{d}_{n}(k)=\frac{1}{2}+\frac{\pi k}{2}\left(\frac{k^{4/3}}{4\sqrt{n^2+k^2}}J^2_{n+1/2}(k)-\frac{1}{2}(J_{n+1/2}(k)Y_{n+1/2}(k))'\right).$$
It was established in~\cite{Graham} (Lemma 4.5) that there exists $\delta>0$ and $m_0>0$ such that for all $m\geq m_0$ we have $(J_m(k)Y_m(k))'<0$ for all $k\in(0,m-\delta m^{1/3})$. It follows immediately then that for all $n\geq \max\{m_0-1/2,0\}$ we have
\begin{equation}\label{eq:est_T_1}
\tilde{d}_{n}(k)\geq \frac{1}{2},\quad k\in(0,n+1/2-\delta (n+1/2)^{1/3}).
\end{equation}
It remains to consider two cases, that is {\em Case} 1: $n<m_0-1/2$ and $k$ large enough, and {\em Case} 2: $\max\{m_0-1/2,0\}\leq n$ and $k\geq (n+1/2)-\delta (n+1/2)^{1/3}$. In both cases we will compare $\tilde{d}_n(k)$ with $\Re(d_{n+1/2}(k,k))$, where $d_m(k,k)$ was defined in equation~\eqref{eq:dm}. A simple application of the Wronskian identity for Bessel functions allows us to rewrite $\Re(d_{n+1/2}(k,k))$ in the following form
$$\Re(d_{n+1/2}(k,k))=\frac{1}{2}+\frac{\pi k}{2}\left(J^2_{n+1/2}(k)-\frac{1}{2}(J_{n+1/2}(k)Y_{n+1/2}(k))'\right).$$
{\em Case} 1: $n<m_0-1/2$ and $k$ large enough: We require that in this case $\tilde{d}_{n}(k)\geq \Re(d_{n+1/2}(k,k))$, which is equivalent to $4\sqrt{n^2+k^2}\leq k^{4/3}$. For the last inequality, it suffices to require that $4\sqrt{m_0^2+k^2}\leq k^{4/3}$. It follows immediately that $4\sqrt{m_0^2+k^2}\leq k^{4/3}$ provided that $k\geq\max\{m_0,2^{15/2}\}$.\\\\
{\em Case} 2: $\max\{m_0-1/2,0\}\leq n$ and $k\geq (n+1/2)-\delta (n+1/2)^{1/3}$: Again we require that $4\sqrt{n^2+k^2}\leq k^{4/3}$ in this case. Clearly we can choose $m_0$ large enough so that if both $\max\{m_0-1/2,0\}\leq n$ and $(n+1/2)-\delta (n+1/2)^{1/3}\leq k$ hold, then it implies that $\frac{n}{2}\leq (n+1/2)-\delta (n+1/2)^{1/3}\leq k$ and 
$$16(n^2+k^2)\leq 16(4k^2 + k^2) =80k^2\leq k^{8/3}.$$
In conclusion, in both {\em Case} 1 and {\em Case} 2 we have
\begin{equation}\label{eq:est_T_2}
\tilde{d}_n(k)\geq \Re(d_{n+1/2}(k,k)).
\end{equation}
 Given that $\Re(d_\nu(k,k))\geq\frac{1}{2}$ for all large enough $k$ and all $\nu\geq 0$~\cite{Graham}, the result of the Theorem now follows if we take into account estimates~\eqref{eq:est_T_1} and~\eqref{eq:est_T_2}.
\end{proof}

Having established the positivity of $\Re(A_n^{(1)}(k,k,k^{1/3}))$, we establish next the positivity of\newline$\Re(A_n^{(2)}(k,k,k^{1/3}))$:
\begin{theorem}\label{thm_coercivity_00}
There exists a constant $C_7$ and a wavenumber $k'$ such that for all $k\geq k'$ we have that
$$\Re(A_n^{(2)}(k,k,k^{1/3}))\geq C_7\ {\rm for\ all}\ n\geq 1.$$
\end{theorem}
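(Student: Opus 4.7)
The strategy is to leverage the decomposition already exhibited in equation~\eqref{eq:eigA}, which expresses $A_n^{(2)}(k,k,\eta)$ as the Neumann 2D eigenvalue $p_{n+1/2}(k,\eta)$ plus a correction term $a_n^{(2)}(k,\eta)$, and then to combine the coercivity estimate~\eqref{eq:coercNeu} with the smallness bound~\eqref{eq:est_an2}. The specific choice $\eta=k^{1/3}$ was clearly made with precisely this decomposition in mind, so the proof should be essentially a calibration of constants.

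First I would write
\[
\Re\bigl(A_n^{(2)}(k,k,k^{1/3})\bigr)=\Re\bigl(p_{n+1/2}(k,k^{1/3})\bigr)+\Re\bigl(a_n^{(2)}(k,k^{1/3})\bigr).
\]
Applying the coercivity result~\eqref{eq:coercNeu} established in~\cite{turc7} for $\nu=n+1/2\geq 3/2$ (noting that result holds for all $\nu\geq 0$ so in particular at half-integers) yields $\Re\bigl(p_{n+1/2}(k,k^{1/3})\bigr)\geq C_0$ for all $n\geq 1$ and all $k\geq k_0$. Next I would insert $\eta=k^{1/3}$ in the estimate~\eqref{eq:est_an2} to obtain
\[
\bigl|a_n^{(2)}(k,k^{1/3})\bigr|\leq C_6 k^{-2/3}\bigl(1+k^{1/3}\cdot k^{-1/3}\bigr)=2C_6 k^{-2/3},
\]
valid for all $k\geq k_5$ and all $n\geq 1$. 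In particular $\Re\bigl(a_n^{(2)}(k,k^{1/3})\bigr)\geq -2C_6 k^{-2/3}$.

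Combining these two bounds gives
\[
\Re\bigl(A_n^{(2)}(k,k,k^{1/3})\bigr)\geq C_0-2C_6 k^{-2/3}\qquad\text{for all }k\geq \max\{k_0,k_5\},\;n\geq 1.
\]
One then simply chooses $k'\geq\max\{k_0,k_5\}$ so large that $2C_6(k')^{-2/3}\leq C_0/2$, which yields the conclusion with $C_7=C_0/2$.

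The proof contains no genuine obstacle: the deep analytic work has been absorbed into~\eqref{eq:coercNeu} (coercivity of the 2D Neumann CFIE eigenvalues proved in~\cite{turc7}) and into~\eqref{eq:est_an2} (already derived above using the asymptotic bounds from~\cite{BanjaiSauter} together with Lemma~\ref{ImKm}). The only point requiring mild care is to verify that the choice $\eta=k^{1/3}$ makes the correction term $a_n^{(2)}(k,k^{1/3})$ uniformly small in $n$ and decaying in $k$—which is exactly what the factor $(1+|\eta|k^{-1/3})$ in~\eqref{eq:est_an2} is designed to allow. Thus Theorem~\ref{thm_coercivity_00} stands as a straightforward but essential companion to Theorem~\ref{thm_coercivity_0}, supplying the coercivity of the second family of spectral components and permitting a unified coercivity statement for the full Calder\'on--Ikawa CFIER operator on the sphere.
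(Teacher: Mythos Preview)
Your proposal is correct and follows essentially the same approach as the paper's own proof: both use the decomposition $A_n^{(2)}(k,k,k^{1/3})=p_{n+1/2}(k,k^{1/3})+a_n^{(2)}(k,k^{1/3})$ from~\eqref{eq:eigA}, invoke the coercivity bound~\eqref{eq:coercNeu} and the smallness estimate~\eqref{eq:est_an2}, and conclude with $C_7=C_0/2$ by choosing $k'$ large enough.
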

\begin{proof} It follows from the definition of $A_n^{(2)}(k,k,k^{1/3})$ given in equations~\eqref{eq:eigA} that\newline
 $\Re(A_n^{(2)}(k,k,k^{1/3}))\geq \Re(p_{n+1/2}(k,k^{1/3}))-|a_n^{(2)}(k,k^{1/3})|$, which implies that there exists a wavenumber $k'$  such that $\Re(A_n^{(2)}(k,k,k^{1/3}))\geq \frac{C_0}{2}=C_7$ for all $n\geq 1$ and all $k\geq k'$ if we take into account the estimates~\eqref{eq:coercNeu} and~\eqref{eq:est_an2}. 
\end{proof}

Given the fact that $\left(\frac{\nabla_{\mathbb{S}^2} Y_n^{m}}{\sqrt{n(n+1)}}, \frac{\overrightarrow{\rm{curl}}_{\mathbb{S}^2} Y_n^{m}}{\sqrt{n(n+1)}}\right)_{1\leq n,-n\leq m \leq n}$ constitute an orthonormal basis for the space $L^2(TM(\mathbb{S}^2))$ (and an orthogonal basis in the spaces $H^{m}_{\rm div}(\mathbb{S}^2)$ for all $2\leq m$), we derive from the results in Theorem~\ref{upper_bd_1}, Theorem~\ref{thm_coercivity_0}, and Theorem~\ref{thm_coercivity_00} the following
\begin{corollary} Let $\mathcal{A}=\frac{I}{2}-\mathcal{K}_k+k^{4/3} 
\mathcal{T}_k\ (\mathbf{n}\times\mathbf{S}_{ik})$. There exists a constant $C_A$ independent of $k$ and $m$ such that for all sufficiently large $k$ and all $0\leq m$
$$\|\mathcal{A}\|_{H^{m}_{\rm div}(\mathbb{S}^2)\to H^{m}_{\rm div}(\mathbb{S}^2)}\|\mathcal{A}^{-1}\|_{H^{m}_{\rm div}(\mathbb{S}^2)\to H^{m}_{\rm div}(\mathbb{S}^2)}\leq C_A k^{2/3}.$$
\end{corollary}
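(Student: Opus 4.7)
The plan is to reduce the statement to upper and lower bounds on the eigenvalue sequence already computed in this section. First, by writing $k^{4/3}\,\mathbf{n}\times\mathbf{S}_{ik}=\eta\,\kappa\,\mathbf{n}\times\mathbf{S}_{i\kappa}$ with $\kappa=k$ and $\eta=k^{1/3}$, the operator $\mathcal{A}$ is precisely the Calder\'on--Ikawa CFIER operator whose spectral data were derived in~\eqref{eq:eigA}. Consequently $\mathcal{A}$ acts diagonally on the orthogonal system $\bigl(\nabla_{\mathbb{S}^2}Y_n^m/\sqrt{n(n+1)},\,\overrightarrow{\rm curl}_{\mathbb{S}^2}Y_n^m/\sqrt{n(n+1)}\bigr)_{n\geq 1,\,|m|\leq n}$, with eigenvalues $A_n^{(1)}(k,k,k^{1/3})$ on the $\nabla_{\mathbb{S}^2}$-modes and $A_n^{(2)}(k,k,k^{1/3})$ on the $\overrightarrow{\rm curl}_{\mathbb{S}^2}$-modes.

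Second, I would unwind the $H^m_{\rm div}(\mathbb{S}^2)$ norm in this Helmholtz basis. Expanding a generic tangential field as $\mathbf{v}=\sum v_n\,\nabla_{\mathbb{S}^2}Y_n^m/\sqrt{\gamma_n}+\sum w_n\,\overrightarrow{\rm curl}_{\mathbb{S}^2}Y_n^m/\sqrt{\gamma_n}$, and using that $\text{div}_\Gamma$ kills the $\overrightarrow{\rm curl}$-modes while $\text{div}_\Gamma(\nabla_{\mathbb{S}^2}Y_n^m/\sqrt{\gamma_n})=-\sqrt{\gamma_n}Y_n^m$, the spectral characterization of $H^s(\mathbb{S}^2)$ via Laplace--Beltrami eigenvalues gives, with $\gamma_n=n(n+1)$,
$$\|\mathbf{v}\|_{H^m_{\rm div}}^{2}=\sum_{n,m}(1+\gamma_n)^{m+1}|v_n|^{2}+\sum_{n,m}(1+\gamma_n)^{m}|w_n|^{2},$$
and $\mathcal{A}\mathbf{v}$ has exactly the same expansion with $v_n\to A_n^{(1)}v_n$ and $w_n\to A_n^{(2)}w_n$. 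Comparing the two expansions termwise yields
$$\|\mathcal{A}\|_{H^m_{\rm div}\to H^m_{\rm div}}=\sup_{n\geq 1}\max\bigl(|A_n^{(1)}(k,k,k^{1/3})|,\,|A_n^{(2)}(k,k,k^{1/3})|\bigr),$$
and the analogous identity with $|A_n^{(j)}|^{-1}$ holds for $\|\mathcal{A}^{-1}\|$; crucially, neither quantity depends on $m$.

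Third, the upper bound is obtained by specializing Theorem~\ref{upper_bd_1} with $\eta=k^{1/3}$, which furnishes $|A_n^{(1)}|\leq C_5 k^{2/3}$ and $|A_n^{(2)}|\leq C_5(1+k^{1/3})\leq 2C_5 k^{1/3}$ uniformly in $n\geq 1$ for $k\geq k_4$; hence $\|\mathcal{A}\|_{H^m_{\rm div}\to H^m_{\rm div}}\leq C_5 k^{2/3}$. The matching lower bound on the eigenvalues is exactly the content of Theorems~\ref{thm_coercivity_0} and~\ref{thm_coercivity_00}, which give $\Re A_n^{(1)}(k,k,k^{1/3})\geq 1/2$ and $\Re A_n^{(2)}(k,k,k^{1/3})\geq C_7$ for all $n\geq 1$ and sufficiently large $k$. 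Therefore $|A_n^{(j)}|\geq\min(1/2,C_7)>0$, so that $\|\mathcal{A}^{-1}\|_{H^m_{\rm div}\to H^m_{\rm div}}\leq [\min(1/2,C_7)]^{-1}$, and multiplying the two bounds produces the condition number estimate with $C_A:=C_5/\min(1/2,C_7)$, independent of both $k$ and $m$.

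There is no substantive obstacle here, since the genuine analytical work---the boundedness estimate in Theorem~\ref{upper_bd_1} and, more delicately, the uniform coercivity estimates in Theorems~\ref{thm_coercivity_0} and~\ref{thm_coercivity_00}---has already been carried out. The only item meriting care is the $m$-independence of the norm identification; this is transparent once one notes that $\mathcal{A}$ is a Fourier multiplier on the Helmholtz basis and the $H^m_{\rm div}$ inner product is simultaneously diagonal in the same basis, so the $(1+\gamma_n)^{m+1}$ and $(1+\gamma_n)^m$ weights appear identically in the domain and range norms and cancel in the ratio.
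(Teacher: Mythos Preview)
Your proposal is correct and follows essentially the same route as the paper: the paper simply invokes the fact that the normalized Helmholtz system is an orthonormal basis of $L^2(TM(\mathbb{S}^2))$ and orthogonal in $H^m_{\rm div}(\mathbb{S}^2)$, and then cites Theorems~\ref{upper_bd_1},~\ref{thm_coercivity_0}, and~\ref{thm_coercivity_00} to bound the eigenvalues above and below. You have supplied the details the paper leaves implicit---the identification of the $H^m_{\rm div}$ norm with weighted $\ell^2$ sums and the observation that the weights cancel so that the operator and inverse norms reduce to $\sup_n|A_n^{(j)}|$ and $\sup_n|A_n^{(j)}|^{-1}$ independently of $m$---but the logical skeleton is identical.
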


If we use $PS(\mathbf{n}\times\mathbf{S}_{ik})$ instead of $\mathbf{n}\times\mathbf{S}_{ik}$ as regularizing operators in the CFIER formulations~\eqref{eq:CFIE_R} we obtain boundary integral operators $PS\mathcal{A}=\frac{I}{2}-\mathcal{K}_k+k^{4/3} 
\mathcal{T}_k\ (PS(\mathbf{n}\times\mathbf{S}_{ik}))$ whose spectral properties are qualitatively similar to those of the operators $\mathcal{A}$ defined above. Indeed, we have
\begin{eqnarray}
\left(\frac{I}{2}-\mathcal{K}_k+k^{4/3} \mathcal{T}_k\ (PS(\mathbf{n}\times \mathbf{S}_{ik}))\right)\nabla_{\mathbb{S}^2} Y_n^{m}&=&\left(\frac{1}{2}+\lambda_n(k)-\frac{k^{4/3} \Lambda_n^{(2)}(k)}{2(n(n+1)+k^2)^{1/2}}\right)\nabla_{\mathbb{S}^2} Y_n^{m}\nonumber\\
&=&PSA_n^{(1)}(k,k,k^{1/3})\nabla_{\mathbb{S}^2} Y_n^{m}\nonumber\\
\left(\frac{I}{2}-\mathcal{K}_k+k^{4/3} \mathcal{T}_k\ (PS(\mathbf{n}\times \mathbf{S}_{ik}))\right)\overrightarrow{\rm{curl}}_{\mathbb{S}^2}Y_n^m &=&\left(\frac{1}{2}-\lambda_n(k)+\frac{k^{4/3}\Lambda_n^{(1)}(k)}{2(n(n+1)+k^2)^{1/2}}\right) \overrightarrow{\rm{curl}}_{\mathbb{S}^2} Y_n^{m}\nonumber\\
&=&PSA_n^{(2)}(k,k,k^{1/3})\overrightarrow{\rm{curl}}_{\mathbb{S}^2} Y_n^{m}.
\end{eqnarray} 
Similarly to equations~\eqref{eq:eigA} we get
\begin{eqnarray}\label{eq:eigPSA}
PSA_n^{(1)}(k,k,k^{1/3})&=&\frac{i\pi k}{2}J'_{n+1/2}(k)H_{n+1/2}^{(1)}(k)+\frac{\pi k^{7/3}J_{n+1/2}(k)H_{n+1/2}^{(1)}(k)}{4(n(n+1)+k^2)^{1/2}}+a_n^{(1)}(k)\nonumber\\
PSA_n^{(2)}(k,k,k^{1/3})&=&1-\frac{i\pi k}{2}J'_{n+1/2}(k)H_{n+1/2}^{(1)}(k)+\frac{\pi k^{7/3}J'_{n+1/2}(k)(H_{n+1/2}^{(1)})'(k)}{4(n(n+1)+k^2)^{1/2}}\nonumber\\
&+&PSa_n^{(2)}(k,k)\nonumber\\
\end{eqnarray}
where 
\begin{eqnarray}
PSa_n^{(2)}(k,\eta)&=&-\frac{i\pi}{4}J_{n+1/2}(k)H_{n+1/2}^{(1)}(k)+\frac{k^{1/3}\pi}{4(n(n+1)+k^2)^{1/2}}\nonumber\\
&\times&[\frac{1}{4}J_{n+1/2}(k)H_{n+1/2}^{(1)}(k)+\frac{k}{2}J_{n+1/2}(k)(H_{n+1/2}^{(1)})'(k)+\frac{k}{2}J'_{n+1/2}(k)H_{n+1/2}^{(1)}(k)].\nonumber\\
\end{eqnarray}
Given the results in Lemma~\ref{ImKm}, it follows immediately that qualitatively similar results on upper bounds and coercivity properties hold for the eigenvalues $(PSA_n^{(1)}(k,k,k^{1/3}),PSA_n^{(2)}(k,k,k^{1/3}))_{1\leq n}$ and the eigenvalues $(A_n^{(1)}(k,k,k^{1/3}),A_n^{(2)}(k,k,k^{1/3}))_{1\leq n}$; in particular, the condition number of the integral equation formulations based on the operators $PS\mathcal{A}$ grows like $k^{2/3}$ in the high-frequency regime, just like those based on the operators $\mathcal{A}$.
\begin{remark}
In the light of the result mentioned in Remark~\ref{remark:coerc} and the results established in Theorem~\ref{upper_bd_1}, Theorem~\ref{thm_coercivity_0}, and Theorem~\ref{thm_coercivity_00}, we can derive that the same asymptotic bounds would hold for the condition numbers of the CFIER formulations based on the operators $\frac{I}{2}-\mathcal{K}_k+k \mathcal{T}_k\ (\mathbf{n}\times\mathbf{S}_{ik/2})$ and $\frac{I}{2}-\mathcal{K}_k+k \mathcal{T}_k\ (PS(\mathbf{n}\times\mathbf{S}_{ik/2}))$.
\end{remark}

We investigate next the properties of the eigenvalues blue$(B_n^{(1)}(k,k,\xi),B_n^{(2)}(k,k,\xi))_{0\leq n}$. We have
\begin{eqnarray}\label{eq:eigB}
B_n^{(1)}(k,k,\xi)&=&\frac{i\pi k}{2}J'_{n+1/2}(k)H_{n+1/2}^{(1)}(k)+\frac{\xi\pi^2k^2}{4}J_{n+1/2}(k)H_{n+1/2}^{(1)}(k)[iJ'_{n+1/2}(ik)(H_{n+1/2}^{(1)})'(ik)]\nonumber\\
&+&b_n^{(1)}(k,\xi)\nonumber\\
B_n^{(2)}(k,k,\xi)&=&A_n^{(2)}(k,k,\xi)
\end{eqnarray}
where  
\begin{eqnarray}
b_n^{(1)}(k,\xi)&=&\frac{i\pi}{4}J_{n+1/2}(k)H_{n+1/2}^{(1)}(k)+\frac{\xi\pi^2}{4i}J_{n+1/2}(k)H_{n+1/2}^{(1)}(k)\nonumber\\
&\times&[\frac{1}{4}J_{n+1/2}(ik)H_{n+1/2}^{(1)}(ik)+\frac{ik}{2}J_{n+1/2}(ik)(H_{n+1/2}^{(1)})'(ik)+\frac{ik}{2}J'_{n+1/2}(ik)H_{n+1/2}^{(1)}(ik)]\nonumber\\
\end{eqnarray}
and $p_\nu(k,\xi)$ are defined in equations~\eqref{eq:eigNeu}. We make use again of the estimate established in~\cite{BanjaiSauter} (Proposition 3.10): there exists a constant $C>0$ independent of $k$ such that $|J_\nu(k)H_\nu^{(1)}(k)|\leq Ck^{-\frac{2}{3}}$ for sufficiently large $k$ and for all $\nu\geq 0$.  We established in~\cite{turc7}, that there exists a constant $C$ independent of $k$ such that $0 < iJ_\nu(ik)H_\nu^{(1)}(ik)\leq C (\nu^2 + k^2)^{-1/2}\leq C k^{-1}$ for sufficiently large $k$ and all $\nu\geq 0$. Using these two results in conjunction with estimates (iii) in Lemma~\ref{ImKm} we get that there exist $C>0$ and $k_6>0$ such that
\begin{equation}\label{eq:est_ans}
|b_n^{(1)}(k,\xi)|\leq C k^{-\frac{2}{3}}(1+|\xi|)\ {\rm for\ all}\ k\geq k_6,\ 0\leq n.
\end{equation}

We investigate next the possibility to find upper bounds for the quantities $|B_n^{(1)}(k,k,\xi)|$ for all values of $n$ and large enough values of the wavenumber $k$. We make use one more time of the estimate established in~\cite{BanjaiSauter} (Proposition 3.10): there exists a constant $C>0$ independent of $k$ such that $|kJ_\nu'(k)H_\nu^{(1)}(k)|\leq C$ for sufficiently large $k$ and for all $\nu\geq 0$. If we further take into account estimates (i) in Lemma~\ref{ImKm}, we see that upper bounds for the quantities $|B_n^{(1)}(k,k,\xi)|$ for all values of $n$ and large enough values of the wavenumber $k$ can be obtained once upper bounds for the expressions
$$b_\nu(k)=|J_{\nu}(k)H_\nu^{(1)}(k)|\sqrt{\nu^2+k^2}$$
were established for all values of $\nu\geq 0$ and large enough $k$. Given the estimate $|J_\nu(k)H_\nu^{(1)}(k)|\leq Ck^{-\frac{2}{3}}$ for sufficiently large $k$ and for all $\nu\geq 0$, it follows immediately that for large enough values of $k$ we have
$$b_\nu(k)\leq C\sqrt{2}\ k^{1/3},\ \nu\leq k.$$
We plot in Figure~\ref{fig:max_b_nu_k} the values of $k^{-1/3}\max_{k\leq\nu}b_\nu(k)$ for 5041 values of $k$ from $k=8$ to $k=512$. In view of the estimate above and the results illustrated in Figure~\ref{fig:max_b_nu_k}, we are led to the following heuristic result
$$b_\nu(k)\leq C k^{1/3},\ {\rm for\ all}\ k_7\leq k,\ 0\leq \nu$$
which would in turn imply that there exists a constant $C_8$ such that
\begin{equation}\label{eq:est_uBn1}
|B_n^{(1)}(k,k,\xi)|\leq C_8|\xi| k^{1/3}\ {\rm for\ all}\ k_7\leq k,\ {\rm and\ all}\ 0\leq n.
\end{equation}
\begin{figure}
\centering
\includegraphics[height=65mm]{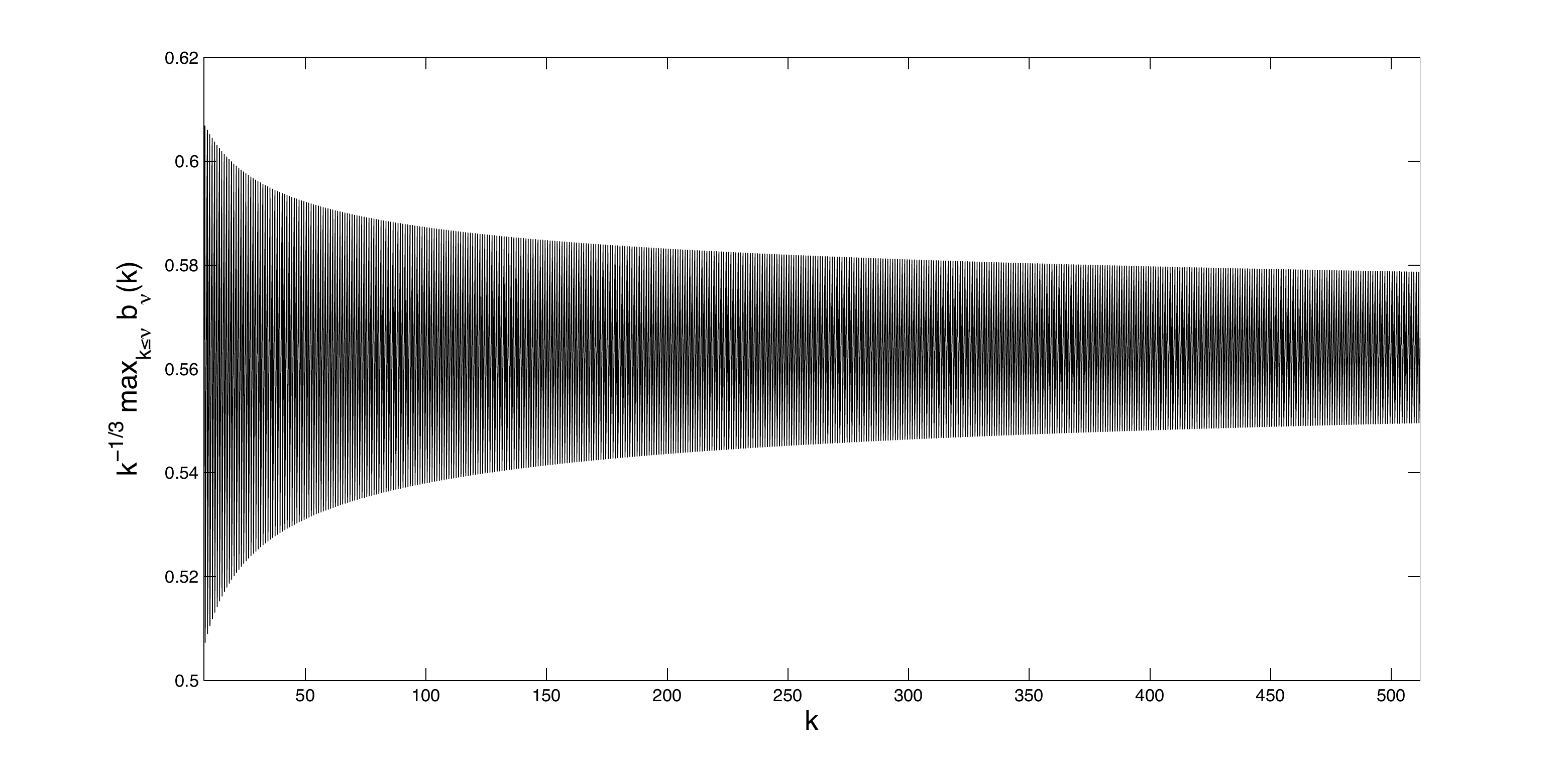}
\caption{Plot of $k^{-1/3}\max_{k\leq\nu}b_\nu(k)$ for 5041 values of $k$ from $k=8$ to $k=512$.}
\label{fig:max_b_nu_k}
\end{figure}
Although a rigorous proof of the heuristic results above is outside the scope of this paper, we give further asymptotic evidence in two important regimes ($\nu\sim k,\ \nu\to\infty$ and $k$ fixed and $\nu\to\infty$) that the heuristic bounds are true. We use the following asymptotic expansions~\cite{Abramowitz} (Formulas (9.3.31)--(9.3.32)) 
$$J_\nu(\nu)=a\nu^{-1/3}+\mathcal{O}(\nu^{-5/3})\qquad Y_\nu(\nu)=-\sqrt{3}a\nu^{-1/3}+\mathcal{O}(\nu^{-5/3})$$
which are valid as $\nu\to\infty$, and where $a=\frac{2^{1/3}}{3^{2/3}\Gamma(2/3)}$, we obtain that for large enough values of $k$ the following estimate is valid
$$b_\nu(k)\sim 2\sqrt{2}a^2k^{1/3},\ \nu\sim k, k\to\infty.$$
The plot of $k^{-1/3}\max_{k\leq\nu}b_\nu(k)$ in Figure~\ref{fig:max_b_nu_k} seems to be consistent with the estimate just derived above, given that $2\sqrt{2}a^2\approx 0.56592$. Furthermore,  we use the result in Formula 9.3 in~\cite{Abramowitz} which is valid for fixed $k$ and $\nu\to\infty$
$$J_\nu(k)H_\nu^{(1)}(k)\sim \frac{1}{2\pi\nu}\left(\frac{ek}{2\nu}\right)^{2\nu}-i\frac{1}{\pi\nu}=\mathcal{O}(\nu^{-1})$$
and we get that for a fixed and large enough $k$ there exists a constant $C(k)$ such that for all $\nu$ large enough we have
$$b_\nu(k)\leq C(k).$$

The next result is a coercivity result that establishes that given the choice $\xi=k^{1/3}$ in the definition of the regularizing operators $\mathcal{R}=-\xi\mathcal{T}_{ik}$, the real parts of the eigenvalues $B_n^{(1)}(k,k,k^{1/3})$ and $B_n^{(2)}(k,k,k^{1/3})$ are bounded from below by strictly positive constants for all values of $n$ for large enough values of the wavenumber $k$. More precisely, we have that
\begin{theorem}\label{thm_coercivity_1}
There exists a constant $C_9$ and a wavenumber $k'$ such that for all $k\geq k'$ we have that
$$\min\{\Re(B_n^{(1)}(k,k,k^{1/3})),\Re(B_n^{(2)}(k,k,k^{1/3}))\}\geq C_9\ {\rm for\ all}\ n\geq 1.$$
\end{theorem}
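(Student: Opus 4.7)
The argument splits according to the two families of eigenvalues appearing in the minimum. For the second family, inspection of the definition in equation~\eqref{eq:eigB} shows that $B_n^{(2)}(k,k,\xi)=A_n^{(2)}(k,k,\xi)$, so Theorem~\ref{thm_coercivity_00} applied with $\xi=k^{1/3}$ immediately gives $\Re(B_n^{(2)}(k,k,k^{1/3}))\geq C_7$ for all $n\geq 1$ and all $k\geq k'$. Hence the substantive part of the argument concerns only $\Re(B_n^{(1)}(k,k,k^{1/3}))$, and the final constant in the theorem will be $C_9=\min\{C_7,1/4\}$ (or any similar choice) once the $B_n^{(1)}$ lower bound is established.

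For the first family, I would take real parts directly in the expression for $B_n^{(1)}(k,k,k^{1/3})$ given in equation~\eqref{eq:eigB}. Writing $H^{(1)}_{n+1/2}=J_{n+1/2}+iY_{n+1/2}$ and observing that the quantity $iJ'_{n+1/2}(ik)(H^{(1)}_{n+1/2})'(ik)$ is real and positive by Lemma~\ref{ImKm}(i), a short expansion produces
\begin{align*}
\Re B_n^{(1)}(k,k,k^{1/3}) &= -\tfrac{\pi k}{2}J'_{n+1/2}(k)Y_{n+1/2}(k) \\
&\quad + \tfrac{\pi^2 k^{7/3}}{4}J^2_{n+1/2}(k)\,\bigl[iJ'_{n+1/2}(ik)(H^{(1)}_{n+1/2})'(ik)\bigr] + \Re b_n^{(1)}(k,k^{1/3}).
\end{align*}
Lemma~\ref{ImKm}(i) combined with the elementary bound $(n^2+k^2)^{1/2}\geq k$ yields $iJ'_{n+1/2}(ik)(H^{(1)}_{n+1/2})'(ik)\geq C_1 k^{-1}$ uniformly in $n\geq 1$, which lower-bounds the middle term by $\tfrac{\pi^2 C_1 k^{4/3}}{4}J^2_{n+1/2}(k)$.

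The decisive step is to observe that once $k$ is large enough so that $\pi C_1 k^{1/3}/2\geq 1$, one has $\tfrac{\pi^2 C_1 k^{4/3}}{4}\geq \tfrac{\pi k}{2}$, and the right-hand side is exactly the coefficient of $J^2_{n+1/2}(k)$ appearing in the real part of the 2D Dirichlet combined field eigenvalue $d_{n+1/2}(k,k)$ from equation~\eqref{eq:dm}: a direct computation using $H^{(1)}=J+iY$ gives $\Re d_\nu(k,k)=-\tfrac{\pi k}{2}J'_\nu(k)Y_\nu(k)+\tfrac{\pi k}{2}J^2_\nu(k)$. Therefore, for $k$ sufficiently large,
\[
\Re B_n^{(1)}(k,k,k^{1/3}) \;\geq\; \Re d_{n+1/2}(k,k) \;-\; |b_n^{(1)}(k,k^{1/3})|.
\]
Invoking the Dirichlet coercivity~\eqref{eq:coercDir} (which holds uniformly in $\nu\geq 0$ for large $k$) together with the remainder estimate $|b_n^{(1)}(k,k^{1/3})|\leq Ck^{-2/3}(1+k^{1/3})\leq 2Ck^{-1/3}$ from equation~\eqref{eq:est_ans}, I conclude $\Re B_n^{(1)}(k,k,k^{1/3})\geq 1/2-2Ck^{-1/3}\geq 1/4$ for $k$ large enough, completing the proof after choosing $k'$ large enough to absorb all the previous smallness thresholds.

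There is no real obstacle; the only delicate point is uniformity in $n$. The tension is between preserving the sharp growth $(n^2+k^2)^{1/2}$ from Lemma~\ref{ImKm}(i), which would give stronger control at large $n$ but destroy the clean identification with a 2D scalar eigenvalue, and using the crude universal lower bound $k$, which preserves the comparison with $d_{n+1/2}(k,k)$. The latter costs nothing, because Dirichlet coercivity~\eqref{eq:coercDir} is itself $\nu$-uniform, and this is precisely why the argument can avoid replicating the case analysis employed in the proof of Theorem~\ref{thm_coercivity_0}. The same strategy would carry over to more general imaginary wavenumber choices $\kappa=ck$ with $c>0$ by adjusting constants.
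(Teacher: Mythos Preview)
Your proof is correct and follows essentially the same approach as the paper's: for $B_n^{(2)}$ you reduce to $A_n^{(2)}$ via Theorem~\ref{thm_coercivity_00}, and for $B_n^{(1)}$ you combine Lemma~\ref{ImKm}(i) with the crude bound $(n^2+k^2)^{1/2}\geq k$ and the threshold $C_1\pi k^{1/3}\geq 2$ to compare directly with the Dirichlet coercivity estimate~\eqref{eq:coercDir}, then absorb the remainder via~\eqref{eq:est_ans}. The only difference is cosmetic---the paper writes the intermediate inequality with the factor $\sqrt{n^2+k^2}$ still present before invoking $\sqrt{n^2+k^2}\geq k$, whereas you collapse it immediately---and your reference to Theorem~\ref{thm_coercivity_00} for the $B_n^{(2)}$ part is in fact the correct one (the paper's citation of Theorem~\ref{thm_coercivity_0} there appears to be a typo).
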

\begin{proof}  We take into account the estimate (i) established in Lemma~\ref{ImKm} to obtain
$$\Re(B_n^{(1)}(k,k,k^{1/3}))\geq -\frac{\pi k}{2}J'_{n+1/2}(k)Y_{n+1/2}(k)+\frac{C_1\pi^2k^{1/3}}{4}J_{n+1/2}^2(k)\sqrt{n^2+k^2}-|b_n^{(1)}(k,k^{1/3})|.$$
On the other hand, the estimate~\eqref{eq:coercDir} can be written explicitly in the form
$$\Re(d_{n+1/2}(k,k))=-\frac{\pi k}{2}J'_{n+1/2}(k)Y_{n+1/2}(k)+\frac{k\pi}{2}J_{n+1/2}^2(k)\geq\frac{1}{2}\ {\rm for\ all}\ 1\leq n$$
and sufficiently large $k$. Since $\sqrt{n^2+k^2}\geq k$ and $C_1\pi k^{1/3}\geq 2$ for sufficiently large $k$, it follows from the previous two estimates that
$$\Re(B_n^{(1)}(k,k,k^{1/3}))\geq \Re(d_{n+1/2}(k,k))-|b_n^{(1)}(k,k^{1/3})|\geq \frac{1}{2}-|b_n^{(1)}(k,k^{1/3})|\ {\rm for\ all}\ n\geq 1$$
and $k$ sufficiently large. The result of the theorem now follows if we take into account estimates~\eqref{eq:est_ans} and the result established in Theorem~\ref{thm_coercivity_0}.
\end{proof}

Given the fact that $\left(\frac{\nabla_{\mathbb{S}^2} Y_n^{m}}{\sqrt{n(n+1)}}, \frac{\overrightarrow{\rm{curl}}_{\mathbb{S}^2} Y_n^{m}}{\sqrt{n(n+1)}}\right)_{1\leq n,-n\leq m \leq n}$ constitute an orthonormal basis for the space $L^2(TM(\mathbb{S}^2))$ (and hence $H^{m}_{\rm div}(\mathbb{S}^2)$ for all $2\leq m$), we derive from the results in Theorem~\ref{upper_bd_1}, equation~\eqref{eq:est_uBn1}, and Theorem~\ref{thm_coercivity_1} the following
\begin{corollary} Let $\mathcal{B}=\frac{I}{2}-\mathcal{K}_k-k^{1/3} 
\mathcal{T}_k\  \mathcal{T}_{ik}$. Assuming that estimate~\eqref{eq:est_uBn1} holds, then there exists a constant $C_B$ independent of $k$ and $m$ such that for all sufficiently large $k$ and all $2\leq m$
$$\|\mathcal{B}\|_{H^{m}_{\rm div}(\mathbb{S}^2)\to H^{m}_{\rm div}(\mathbb{S}^2)}\|\mathcal{B}^{-1}\|_{H^{m}_{\rm div}(\mathbb{S}^2)\to H^{m}_{\rm div}(\mathbb{S}^2)}\leq C_B k^{2/3}.$$
\end{corollary}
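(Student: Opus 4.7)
The plan is to exploit the complete spectral diagonalization of $\mathcal{B}$ in the tangential vector spherical harmonic basis to reduce the operator norm and its inverse to an explicit supremum and reciprocal infimum of moduli of the scalar eigenvalues $B_n^{(1)}(k,k,k^{1/3})$ and $B_n^{(2)}(k,k,k^{1/3})$. Since $\{\nabla_{\mathbb{S}^2}Y_n^m/\sqrt{n(n+1)},\overrightarrow{\rm curl}_{\mathbb{S}^2}Y_n^m/\sqrt{n(n+1)}\}_{n\geq 1,|m|\leq n}$ is an orthogonal basis of $H^s(TM(\mathbb{S}^2))$ for every $s\geq 0$, and since ${\rm div}_{\mathbb{S}^2}\overrightarrow{\rm curl}_{\mathbb{S}^2}=0$ while ${\rm div}_{\mathbb{S}^2}(\nabla_{\mathbb{S}^2} Y_n^m/\sqrt{n(n+1)})=-\sqrt{n(n+1)}\,Y_n^m$, the graph norm of $H^m_{\rm div}(\mathbb{S}^2)$ splits across this basis with weights that depend only on $n$. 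In particular, for $\mathbf{a}=\sum_{n,m}(v_{n,m}\nabla_{\mathbb{S}^2} Y_n^m/\sqrt{n(n+1)}+w_{n,m}\overrightarrow{\rm curl}_{\mathbb{S}^2} Y_n^m/\sqrt{n(n+1)})$ one obtains an expression of the form $\|\mathbf{a}\|_{H^m_{\rm div}}^2\asymp\sum_{n,m}(\alpha_n|v_{n,m}|^2+\beta_n|w_{n,m}|^2)$, and the scalar diagonalization of $\mathcal{B}$ yields
\[
\|\mathcal{B}\|_{H^m_{\rm div}\to H^m_{\rm div}}=\sup_{n\geq 1}\max\{|B_n^{(1)}(k,k,k^{1/3})|,\,|B_n^{(2)}(k,k,k^{1/3})|\},
\]
together with the analogous identity for $\mathcal{B}^{-1}$ whose eigenvalues are the reciprocals.

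Next I would bound the supremum from above. The identity $B_n^{(2)}(k,k,\xi)=A_n^{(2)}(k,k,\xi)$ recorded in \eqref{eq:eigB}, combined with Theorem \ref{upper_bd_1} applied with $\eta=k^{1/3}$, gives $|B_n^{(2)}(k,k,k^{1/3})|\leq C_5(1+k^{1/3})$ uniformly in $n$. The hypothesized estimate \eqref{eq:est_uBn1} specialised at $\xi=k^{1/3}$ yields $|B_n^{(1)}(k,k,k^{1/3})|\leq C_8\,k^{2/3}$ uniformly in $n$. Taking the maximum of these two bounds produces $\|\mathcal{B}\|_{H^m_{\rm div}\to H^m_{\rm div}}\leq C\,k^{2/3}$ for all $k$ sufficiently large, with a constant $C$ independent of $k$ and $m$.

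To bound the infimum from below I would invoke the coercivity result of Theorem \ref{thm_coercivity_1}, which furnishes a threshold wavenumber $k'$ and a constant $C_9>0$ such that $\Re B_n^{(j)}(k,k,k^{1/3})\geq C_9$ for $j=1,2$ and every $n\geq 1$ whenever $k\geq k'$. Since $|z|\geq |\Re z|$, this gives $\inf_{n\geq 1}\min\{|B_n^{(1)}|,|B_n^{(2)}|\}\geq C_9$, and hence $\|\mathcal{B}^{-1}\|_{H^m_{\rm div}\to H^m_{\rm div}}\leq C_9^{-1}$ uniformly in $m$. Multiplying the two bounds yields the desired estimate with $C_B=C/C_9$ independent of $k$ and $m$.

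No step presents a genuine obstacle: the substantive work has already been assembled in Theorem \ref{upper_bd_1}, the heuristic bound \eqref{eq:est_uBn1}, and Theorem \ref{thm_coercivity_1}. The only point deserving mild care is the verification that the graph norm defining $H^m_{\rm div}(\mathbb{S}^2)$ decouples completely across the tangential vector spherical harmonic basis with purely $n$-dependent weights; this is what allows the diagonal eigenvalue formulas for the operator norm and its inverse to transfer verbatim from $L^2(TM(\mathbb{S}^2))$ to every $H^m_{\rm div}(\mathbb{S}^2)$ with $m\geq 2$ and is the reason the condition number bound is uniform in $m$.
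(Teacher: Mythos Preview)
Your proposal is correct and follows essentially the same approach as the paper: the corollary is presented there as an immediate consequence of Theorem~\ref{upper_bd_1} (via $B_n^{(2)}=A_n^{(2)}$), the assumed estimate~\eqref{eq:est_uBn1}, and the coercivity result Theorem~\ref{thm_coercivity_1}, together with the diagonalization of $\mathcal{B}$ in the orthogonal basis $\bigl(\nabla_{\mathbb{S}^2}Y_n^m/\sqrt{n(n+1)},\overrightarrow{\rm curl}_{\mathbb{S}^2}Y_n^m/\sqrt{n(n+1)}\bigr)$. Your write-up simply makes explicit the step---only hinted at parenthetically in the paper---that this basis is orthogonal in every $H^m_{\rm div}(\mathbb{S}^2)$, which is what makes the condition-number bound uniform in $m$.
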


If we use $PS(\mathcal{T}_{ik})$ instead of $\mathcal{T}_{ik}$ as regularizing operators in the CFIER formulations~\eqref{eq:CFIE_R} we obtain boundary integral operators $PS\mathcal{B}=\frac{I}{2}-\mathcal{K}_k-k^{1/3} 
\mathcal{T}_k\ (PS(\mathcal{T}_{ik}))$ whose spectral properties are qualitatively similar to those of the operators $\mathcal{B}$ defined above. Given that for a complex wavenumber $K=\kappa+i\epsilon,\ \kappa\geq 0,\ \epsilon>0$ we have that 
\begin{eqnarray}\label{eq:eigPST}
PS(\mathcal{T}_K)\nabla_{\mathbb{S}^2} Y_n^{m}&=&\frac{i}{2K}(n(n+1)-K^2)^{1/2}\ \overrightarrow{\rm{curl}}_{\mathbb{S}^2}Y_n^m\nonumber\\
PS(\mathcal{T}_K)\overrightarrow{\rm{curl}}_{\mathbb{S}^2}Y_n^m&=&\frac{iK}{2}(n(n+1)-K^2)^{-1/2}\ \nabla_{\mathbb{S}^2} Y_n^{m}
\end{eqnarray}
we obtain 
\begin{eqnarray}
\left(\frac{I}{2}-\mathcal{K}_k-k^{1/3} \mathcal{T}_k\ (PS(\mathcal{T}_{ik}))\right)\nabla_{\mathbb{S}^2} Y_n^{m}&=&\left(\frac{1}{2}+\lambda_n(k)-\frac{\Lambda_n^{(2)}(k)(n(n+1)+k^2)^{1/2}}{2k^{2/3}}\right)\nabla_{\mathbb{S}^2} Y_n^{m}\nonumber\\
&=&PSB_n^{(1)}(k,k,k^{1/3})\nabla_{\mathbb{S}^2} Y_n^{m}\nonumber\\
\left(\frac{I}{2}-\mathcal{K}_k-k^{1/3}\mathcal{T}_k\ (PS(\mathcal{T}_{ik}))\right)\overrightarrow{\rm{curl}}_{\mathbb{S}^2}Y_n^m &=&PSB_n^{(2)}(k,k,k^{1/3})\overrightarrow{\rm{curl}}_{\mathbb{S}^2} Y_n^{m}\nonumber\\
&=&PSA_n^{(2)}(k,k,k^{1/3})\overrightarrow{\rm{curl}}_{\mathbb{S}^2} Y_n^{m}.
\end{eqnarray} 
Taking into account the results in Lemma~\ref{ImKm}, it follows immediately that qualitatively similar results on upper bounds and coercivity properties hold for the eigenvalues\\ $(PSB_n^{(1)}(k,k,k^{1/3}),PSB_n^{(2)}(k,k,k^{1/3}))_{1\leq n}$ and the eigenvalues $(B_n^{(1)}(k,k,k^{1/3}),B_n^{(2)}(k,k,k^{1/3}))_{1\leq n}$; in particular, the condition number of the integral equation formulations based on the operators $PS\mathcal{B}$ grow like $k^{2/3}$ in the high-frequency regime, just like those based on the operators $\mathcal{B}$.

\section{Dirichlet to Neumann maps and nearly optimal choices of regularizing operators for spherical scatterers}\label{DtN}
We present in this subsection the remarkable spectral properties that the Calder\'on-Complex CFIER integral operators $\mathcal{B}_{k,\gamma,\kappa_1,\kappa_2}$ and their principal value analogues $PS\mathcal{B}_{k,\gamma,\kappa_1,\kappa_2}$ defined in equations~\eqref{eq:opB} and~\eqref{eq:param_PS3} respectively possess in the case when $\gamma=2$, $\kappa_1=k$, and $\kappa_2=0.4k^{1/3}$. This choice of parameters $\gamma$, $\kappa_1$, and $\kappa_2$ is motivated by considerations on the Dirichlet-to-Neumann map of the electromagnetic scattering problem~\cite{Darbas}. Specifically, the Dirichlet-to-Neumann (DtN) map is defined as $Y(\mathbf{n}\times\mathbf{E}^s)=\mathbf{n}\times\mathbf{H}^s$. Using the Stratton-Chu representation formula~\cite{co-kr}
\begin{eqnarray}
\mathbf{E}^s(\mathbf{z})&=&{\rm curl}\ \int_\Gamma G_k(\mathbf{z}-\mathbf{y})(\mathbf{n}(\mathbf{y})\times\mathbf{E}^s(\mathbf{y}))d\sigma(\mathbf{y})+\frac{i}{k}{\rm curl}\ {\rm curl}\ \int_\Gamma G_k(\mathbf{z}-\mathbf{y})(\mathbf{n}(\mathbf{y})\times\mathbf{H}^s(\mathbf{y}))d\sigma(\mathbf{y})\nonumber\\
&=&{\rm curl}\ \int_\Gamma G_k(\mathbf{z}-\mathbf{y})(\mathbf{n}\times\mathbf{E}^s)(\mathbf{y})d\sigma(\mathbf{y})+\frac{i}{k}{\rm curl}\ {\rm curl}\ \int_\Gamma G_k(\mathbf{z}-\mathbf{y})Y(\mathbf{n}\times\mathbf{E}^s)(\mathbf{y})d\sigma(\mathbf{y})\nonumber
\end{eqnarray}
we get if we apply the $\mathbf{n}\times\cdot$ trace that $\mathcal{T}_kY=\frac{I}{2}+\mathcal{K}_k$. Clearly, if the regularizing operator $\mathcal{R}$ in equations~\eqref{eq:Representation} were chosen such that $\mathcal{R}=Y$, then the ensuing CFIER boundary integral operators would be equal to the identity operator. From this point of view, the better the regularizing operator $\mathcal{R}$ approximates the DtN operator, the closer the regularized combined field integral operators would be to the identity operator. The DtN operators $Y$ cannot be computed analytically but for simple scatterers (e.g. spherical, planes); for general surfaces $\Gamma$, the computation of the DtN operators, if at all possible, is typically as expensive as the solution of the scattering problem since $Y=\mathcal{T}_k^{-1}\left(\frac{I}{2}+\mathcal{K}_k\right)$. Using Calder\'on's identities and the previous formula we get that $Y\sim-2\mathcal{T}_k$. However, as previously argued, the choice $\mathcal{R}=-2\mathcal{T}_k$ does not lead to uniquely solvable regularized formulations for all wavenumbers $k$. Nevertheless, we can seek an approximation to the DtN operator $Y$ in the form $\mathcal{R}=-2\mathcal{T}_{k+i\kappa_2}$, which does lead to invertible operators $\mathcal{B}_{k,2,k,\kappa_2}=\frac{I}{2}-\mathcal{K}_k-2\mathcal{T}_k\  \mathcal{T}_{k+i\kappa_2}$---see Theorem~\ref{tm11}. On the other hand, using Fourier transforms, the operators DtN can be computed as Fourier multipliers in the case when $\Gamma$ is a plane in $\mathbb{R}^3$. Standard techniques of tangent plane approximations lead to approximations of the DtN operators $Y$ for general smooth surfaces $\Gamma$ in the form $\mathcal{R}=-2PS(\mathcal{T}_{k+i\kappa_2})$~\cite{Darbas}; the complexification in the definition of the latter operator is needed in order to ensure the injectivity of the operators $PS\mathcal{B}_{k,2,k,\kappa_2}=\frac{I}{2}-\mathcal{K}_k-2\mathcal{T}_k\  PS(\mathcal{T}_{k+i\kappa_2})$---see Theorem~\ref{thm2}. The selection of the parameters $\kappa_2$ is guided by considerations on the spectral properties of the DtN operators for spherical scatterers. The spectral properties of the DtN operators in the case $\Gamma=\mathbb{S}^2$~\cite{Nedelec} are given below
\begin{eqnarray}
\label{eq:eigY}
Y(\nabla_{\mathbb{S}^2} Y_n^{m})&=&\frac{i(z_n(k)+1)}{k}\overrightarrow{\rm{curl}}_{\mathbb{S}^2} Y_n^{m}=Z_n^{(1)}(k)\overrightarrow{\rm{curl}}_{\mathbb{S}^2} Y_n^{m},\\
\label{eq:eigY2}Y(\overrightarrow{\rm{curl}}_{\mathbb{S}^2} Y_n^{m})&=&
\frac{ik}{z_n(k)+1} \nabla_{\mathbb{S}^2} Y_n^{m}=Z_n^{(2)}(k) \nabla_{\mathbb{S}^2} Y_n^{m},
\end{eqnarray} 
where $z_n(k)=\frac{k(h_n^{(1)}(k))'}{h_n^{(1)}(k)}$. The value of the parameter $\kappa_2$ can be selected by minimizing the expression $\max_{1\leq n}\{|Z_n^{(1)}(k)+2\Lambda_n^{(1)}(k+i\kappa_2)|,|Z_n^{(2)}(k)+2\Lambda_n^{(2)}(k+i\kappa_2)|\}$, where the values $\{Z_n^{(1)}(k),Z_n^{(2)}(k)\}_{1\leq n}$ were defined in equations~\eqref{eq:eigY}-\eqref{eq:eigY2} and the values $\{\Lambda_n^{(1)}(k+i\kappa_2),\Lambda_n^{(2)}(k+i\kappa_2)\}_{1\leq n}$ defined in equations~\eqref{eq:Lambda_n_1}-\eqref{eq:Lambda_n_2} respectively. For large values of the wavenumber $k$, the maximum sought after occurs for values of the index $n$ such that $n\approx k$. Just as in~\cite{Darbas}, using standard asymptotic formulas of Hankel functions~\cite{Abramowitz} (Formulas (9.3.31)--(9.3.34)), the optimal value $\kappa_2=0.4k^{1/3}$ can be derived in the case when $\Gamma=\mathbb{S}^2$. We present in Figure~\ref{fig:diffBmarion} the quotients $\frac{|Z_n^{(1)}(k)+2\Lambda_n^{(1)}(k+0.4\ ik^{1/3})|}{|Z_n^{(1)}(k)|}$ (black), $\frac{|Z_n^{(2)}(k)+2\Lambda_n^{(2)}(k+0.4\ ik^{1/3})|}{|Z_n^{(2)}(k)|}$ (red) for $k=32$ (top) and $k=160$ (bottom) and and $n=1,\ldots,320$ (bottom). Similar results are obtained if we replace in the formulas above the values $\{\Lambda_n^{(1)}(k+i\kappa_2),\Lambda_n^{(2)}(k+i\kappa_2)\}_{1\leq n}$ by $\{PS\Lambda_n^{(1)}(k+i\kappa_2),PS\Lambda_n^{(2)}(k+i\kappa_2)\}_{1\leq n}$ defined in equation~\eqref{eq:eigPST}. Furthermore, in the case when $\Gamma=R\ \mathbb{S}^2$ (that is spheres of radius $R$) similar calculations lead to the almost optimal choices of the regularizing operators in the form $\mathcal{R}=-2\mathcal{T}_{k+0.4 i R^{-2/3}k^{1/3}}$ and $\mathcal{R}=-2PS(\mathcal{T}_{k+0.4 i R^{-2/3}k^{1/3}})$~\cite{Darbas}. 
\begin{figure}
\centering
\includegraphics[height=65mm]{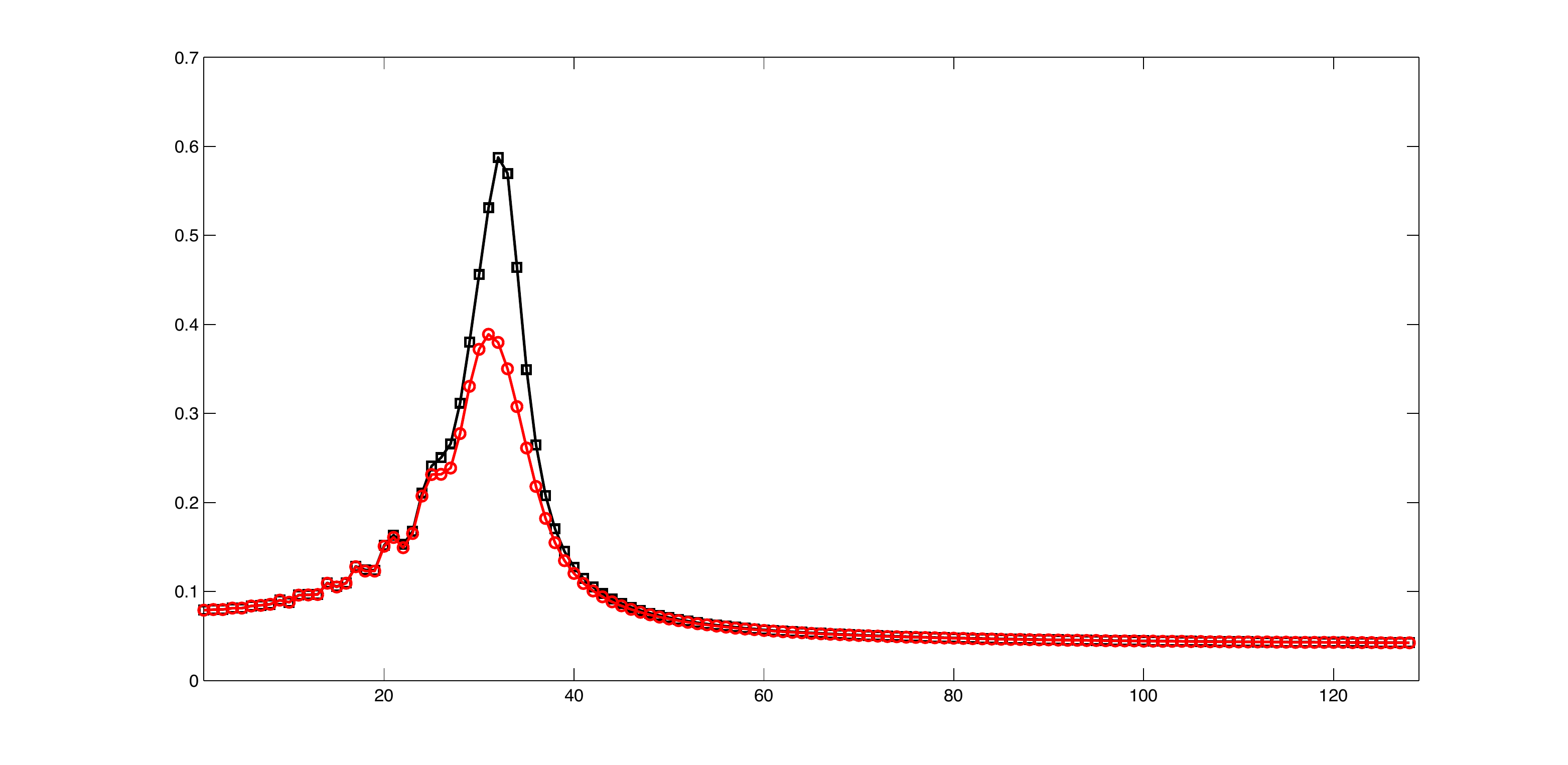}\\
\includegraphics[height=65mm]{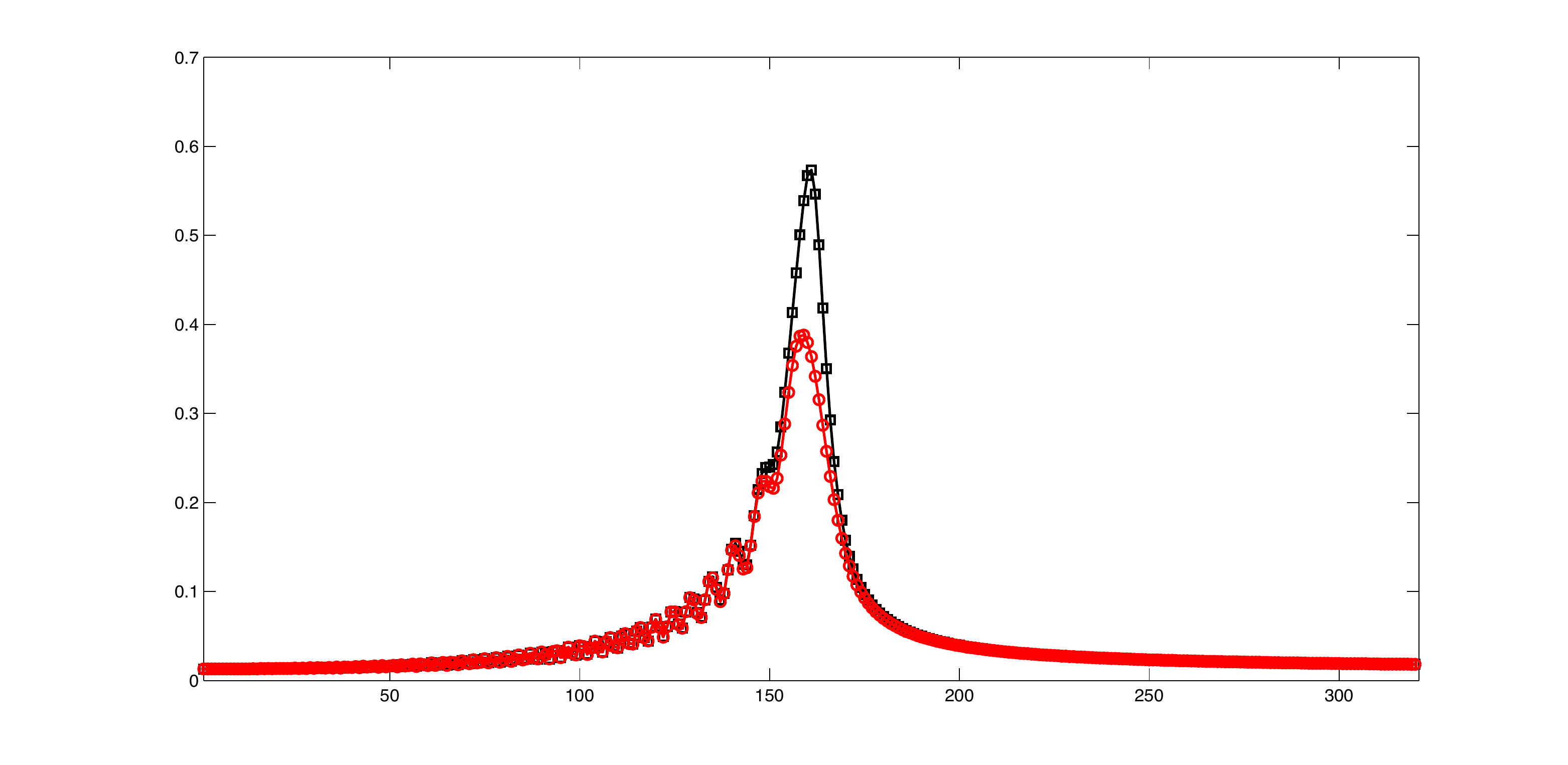}
\caption{Plots of $\frac{|Z_n^{(1)}(k)+2\Lambda_n^{(1)}(k+0.4\ ik^{1/3})|}{|Z_n^{(1)}(k)|}$ (black), $\frac{|Z_n^{(2)}(k)+2\Lambda_n^{(2)}(k+0.4\ ik^{1/3})|}{|Z_n^{(2)}(k)|}$ (red) for $k=32$ (top) and $k=160$ (bottom) for $n=1,\ldots,128$ (top) and $n=1,\ldots,320$ (bottom).}
\label{fig:diffBmarion}
\end{figure}

In addition, the eigenvalues of the operators $\mathcal{B}_{k,2,k,0.4k^{1/3}}$ and $PS\mathcal{B}_{k,2,k,0.4k^{1/3}}$ can be computed easily from equations~\eqref{eq:lambda_n},~\eqref{eq:Lambda_n_1},~\eqref{eq:Lambda_n_2}, and~\eqref{eq:eigPST}. We display in Figure~\ref{fig:Bmarion} and Figure~\ref{fig:Bmarionps} the coercivity constants (that is the minimum value of the real parts of the eigenvalues of these operators) and the condition numbers of the operators $\mathcal{B}_{k,2,k,0.4k^{1/3}}$ and $PS\mathcal{B}_{k,2,k,0.4k^{1/3}}$ respectively for 5041 values of $k$ from $k=8$ to $k=512$. The numerical results depicted in Figure~\ref{fig:Bmarion} and Figure~\ref{fig:Bmarionps} suggest that both operators $\mathcal{B}_{k,2,k,0.4k^{1/3}}$ and $PS\mathcal{B}_{k,2,k,0.4k^{1/3}}$ are coercive for large enough values of $k$ in the case of spherical scatterers, and their condition numbers appear to be bounded independently of the wavenumber $k$ for large enough values of $k$. A rigorous proof of these results is outside the scope of the present effort.
\begin{figure}
\centering
\includegraphics[height=65mm]{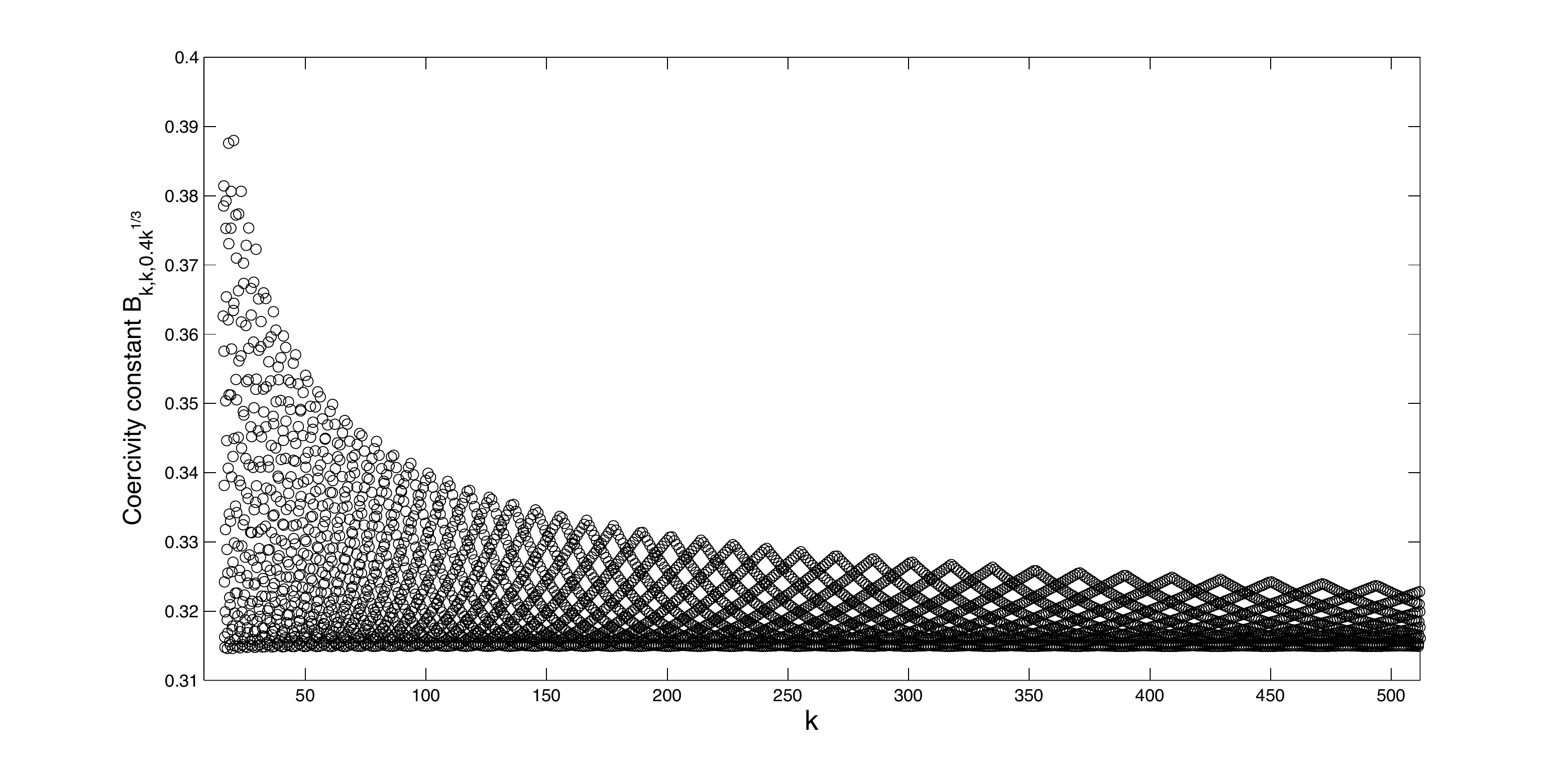}\\
\includegraphics[height=65mm]{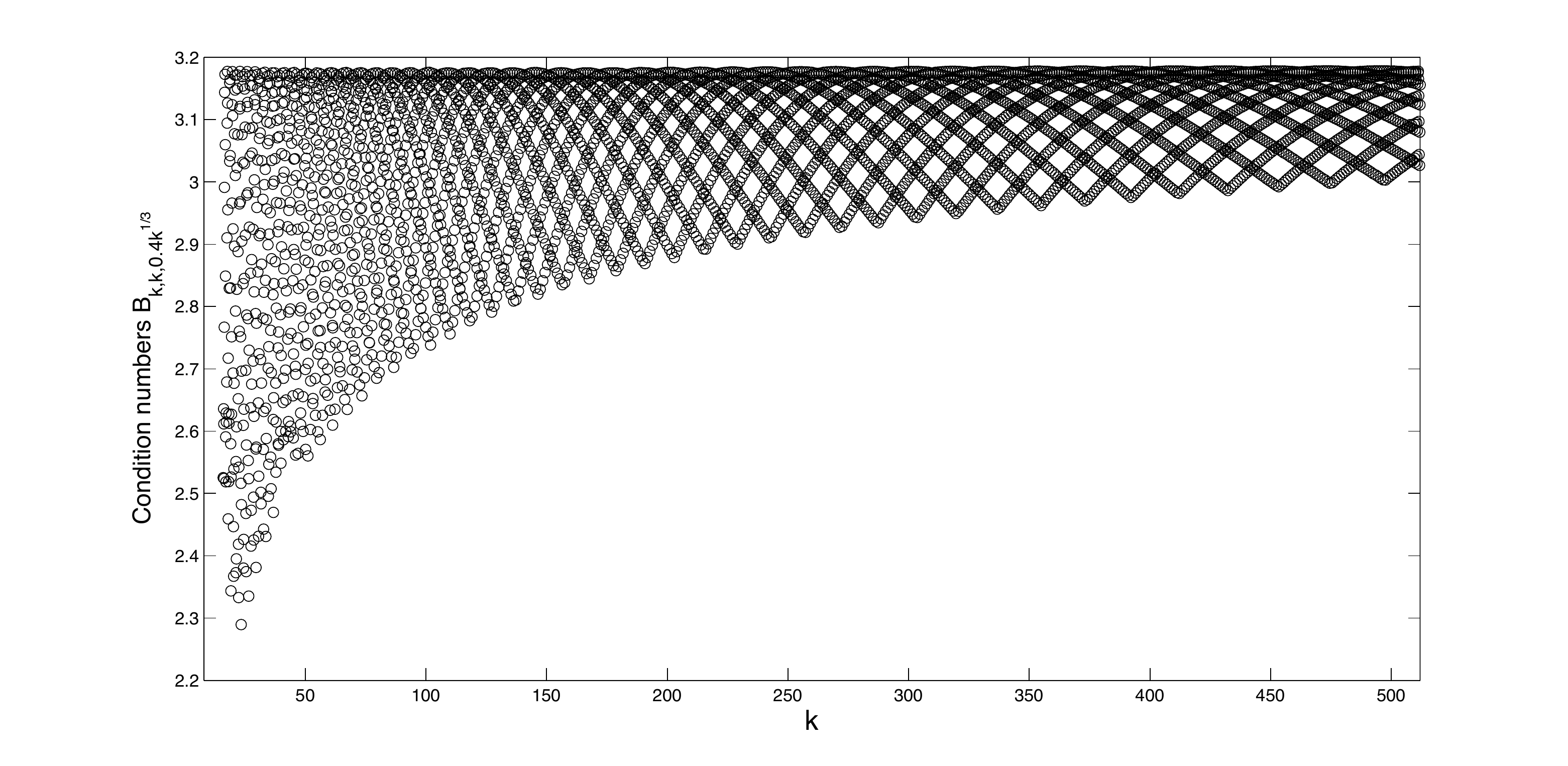}
\caption{Plots of the coercivity constants (top) and condition numbers (bottom) of the operators $\mathcal{B}_{k,2,k,0.4k^{1/3}}$ for 5041 values of the wavenumber $k$ from $k=8$ to $k=512$.}
\label{fig:Bmarion}
\end{figure}

\begin{figure}
\centering
\includegraphics[height=65mm]{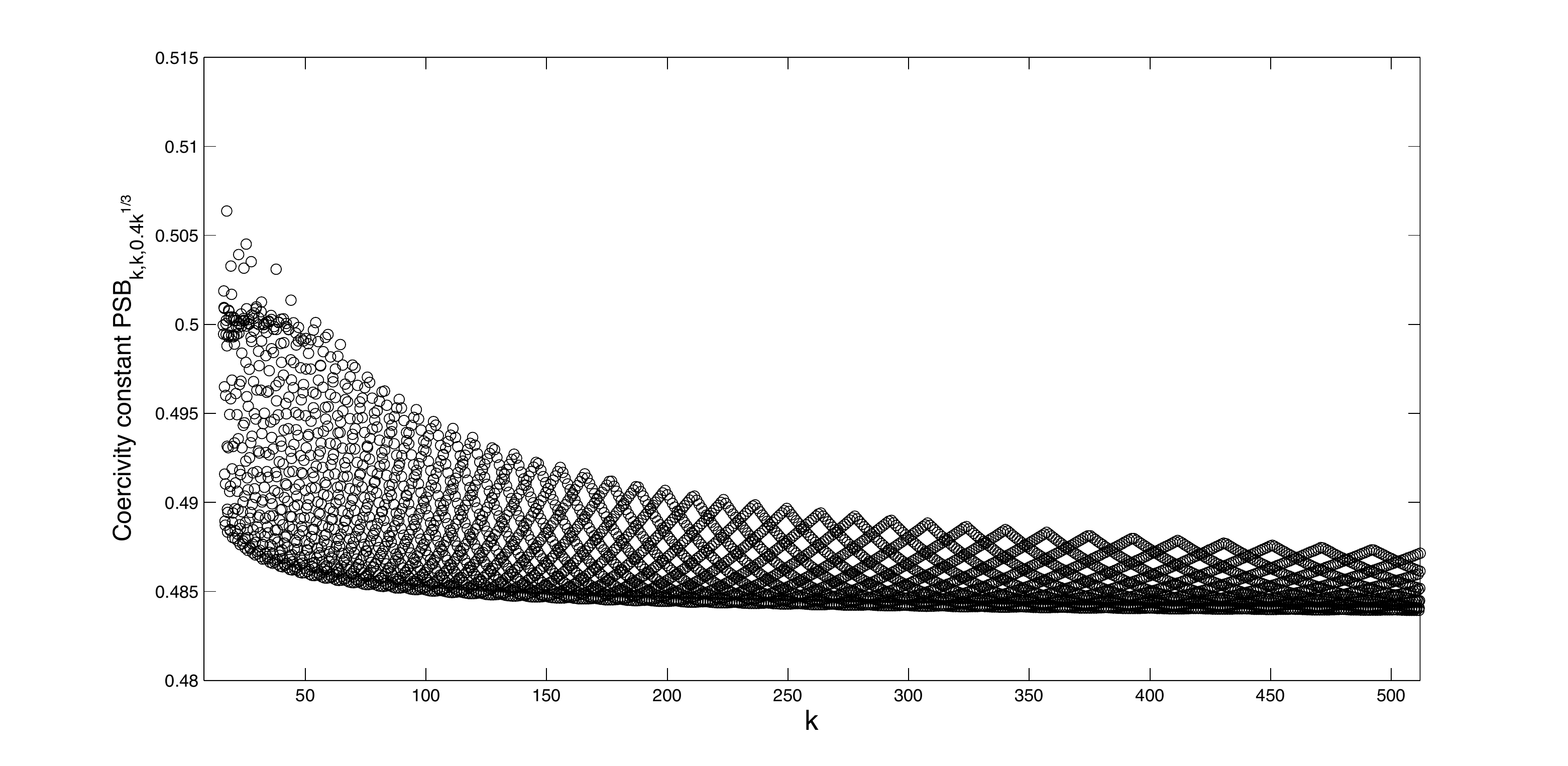}\\
\includegraphics[height=65mm]{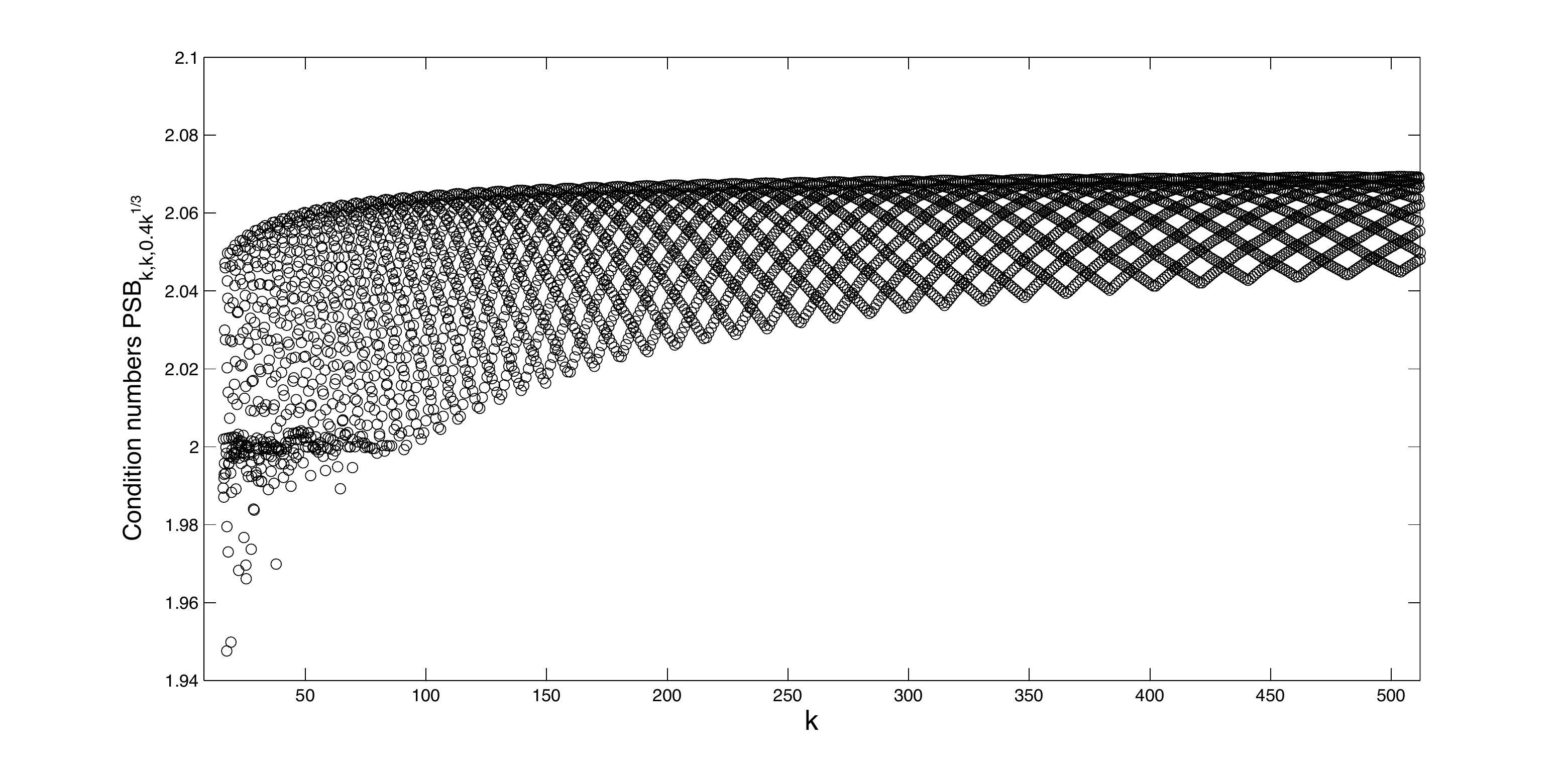}
\caption{Plots of the coercivity constants (top) and condition numbers (bottom) of the operators $PS\mathcal{B}_{k,2,k,0.4k^{1/3}}$ for 5041 values of the wavenumber $k$ from $k=8$ to $k=512$.}
\label{fig:Bmarionps}
\end{figure}

\section{High-frequency numerical experiments}\label{num_exp}

We present in this section a variety of numerical results that
demonstrate the important computational savings that can be garnered from use of regularized combined field integral equations in the high-frequency regime. Most importantly, we showcase the superior performance of solvers based on the novel Calder\'on-Complex CFIER formulations that involve the boundary integral operators
\begin{equation}\label{eq:CFIERc}
\mathcal{B}_{k,2,k,0.4\mathcal{H}^{2/3}k^{1/3}}=\frac{I}{2}-\mathcal{K}_k-2\mathcal{T}_k\  \mathcal{T}_{k+i\ 0.4\mathcal{H}^{2/3}k^{1/3}},
\end{equation}
where $\mathcal{H}$ is the maximum of the absolute values of mean curvatures on surface $\Gamma$. The latter choice is motivated by the fact that $c=R^{-2/3}$ leads to operators $\mathcal{B}_{k,2,k,0.4ck^{1/3}}$ with nearly optimal spectral properties in the high-frequency regime in the case when $\Gamma$ is a sphere of radius $R$ (see Section~\ref{DtN}). For general smooth surfaces $\Gamma$, we followed the heuristical practice of replacing $R^{-1}$ by $\mathcal{H}$. 

In addition, we illustrate the performance of solvers based on Calder\'on-Ikawa CFIER formulations at high frequencies. This type of formulations has been shown to work effectively in the low and medium frequency regime~\cite{Contopa_et_al,turc1,Andriulli1,Andriulli}. We focus on the arguably simplest version of Calder\'on CFIER formulations that involve the following boundary integral operators~\cite{turc1}:
\begin{equation}\label{eq:fIE}
\mathcal{A}_{k,S_{ik/2}}=I/2-\mathcal{K}_k+k \mathcal{T}_k\ (\mathbf{n}\times \mathbf{S}_{ik/2}).
\end{equation} 
The motivation for the consideration of formulations based on the operators $\mathcal{A}_{k,S_{ik/2}}$ is the fact that amongst Calder\'on-Ikawa type CFIER formulations, those based on the operators $\mathcal{A}_{k,S_{ik/2}}$ are almost optimal. Indeed, the properties of the solvers based on the integral operators $\mathcal{A}_{k,S_{ik/2}}$ defined in equations~\eqref{eq:fIE} were investigated in~\cite{turc1} in the low and medium-frequency range. The same reference~\cite{turc1} provides ample numerical comparisons with solvers based on the Calder\'on-Ikawa CFIER operators $\mathcal{B}_{k,1,0,k}$ that were proposed in~\cite{Contopa_et_al}: solvers based on the operators $\mathcal{B}_{k,1,0,k}$ lead to somewhat larger iteration counts than those based on the operators $\mathcal{A}_{k,S_{ik/2}}$, and the cost of evaluating a matrix-vector product associated with the former operators is on average about $1.6$ times more expensive than the that associated with the latter operators. In order to strengthen our claim, we present further numerical evidence in Figure~\ref{fig:Bmarioncomp} on the wavenumber dependence of the condition numbers of the operators $\mathcal{A}_{k,S_{ik/2}}$, $\mathcal{B}_{k,1,0,k}$, and $\mathcal{B}_{k,2,k,0.4k^{1/3}}$ for spherical geometries of radius one. As it can be seen from the results in Figure~\ref{fig:Bmarioncomp}, the condition numbers of the operators $\mathcal{A}_{k,S_{ik/2}}$ and $\mathcal{B}_{k,1,0,k}$ behave asymptotically as $\mathcal{O}(k^{2/3})$ for spherical scatterers, with smaller proportionality constants for the former operators. In contrast, the condition numbers of the operators $\mathcal{B}_{k,2,k,0.4k^{1/3}}$ are bounded independently of frequency for spherical scatterers. 
\begin{figure}
\centering
\includegraphics[height=65mm]{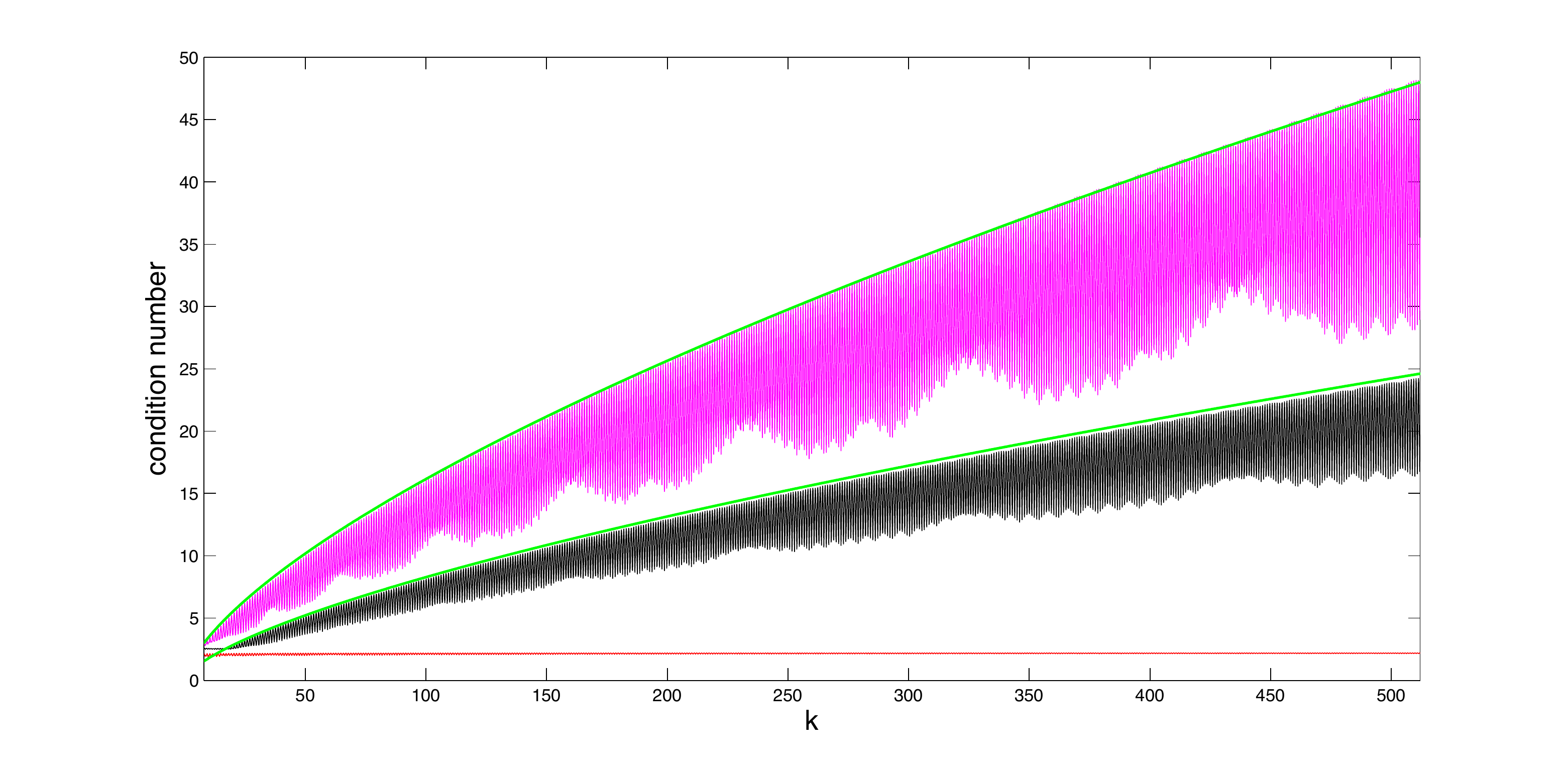}
\caption{Plot of the condition numbers of the three regularized combined field integral operators $\mathcal{A}_{k,S_{ik/2}}$ (black), $\mathcal{B}_{k,1,0,k}$ (magenta), and $\mathcal{B}_{k,2,k,0.4k^{1/3}}$ (red)  as a function of the wavenumber $k$ for 5041 values of $k$ ranging from $k=8$ to $k=512$. A fairly sharp upper bound on the condition number of the operators $\mathcal{A}_{k,S_{ik/2}}$ is given by $0.38\ k^{2/3}$ (plotted in green) and a fairly sharp upper bound on the condition number of the operators $\mathcal{B}_{k,1,0,k}$ is given by $0.75\ k^{2/3}$ (plotted in green). Both upper bounds are in agreement with the theoretical predictions presented in Section~\ref{spec_prop_cfie}.}
\label{fig:Bmarioncomp}
\end{figure}
We compare the performance of solvers based on regularized combined field integral equations involving the integral operators defined in equations~\eqref{eq:CFIERc} and~\eqref{eq:fIE} to those based on the classical combined field integral equation that features the boundary integral operator~\cite{Kress1985}
\begin{equation}\label{eq:cIE}
\mathcal{C}_k=I/2-\mathcal{K}_k+ \mathcal{T}_k\ (\mathbf{n}\times I).
\end{equation}
We re-emphasize that unlike the CFIER formulations considered in this text, the combined field integral equations based on the operator $\mathcal{C}_k$ are not integral equations of the second kind. Nevertheless, formulations based on $\mathcal{C}_k$ are robust for all wavenumbers $k$, and are by far the most widely used formulations in the computational electromagnetic community. We illustrate in this section that solvers based on CFIER formulations give rise to important computational savings over solvers based on CFIE formulations in the high-frequency regime.

Our strategy for evaluation of the relevant discrete
integro-differential operators that enter the integral equation formulations based on the operators $\mathcal{B}_{k,2,k,0.4\mathcal{H}^{2/3}k^{1/3}}$, $\mathcal{A}_{k,S_{ik/2}}$, and $\mathcal{C}_k$ described above relies on a Nystr\"om (collocation) method that uses {\em local coordinate
charts} together with {\em fixed} and {\em floating} partitions of
unity (POU), as proposed in~\cite{BrunoKun,turc1,turc3}. First, following the prescriptions in~\cite{turc1}, we express all the electromagentic boundary integral operators as integral operators with weakly singular kernels, that is
\begin{equation}
\label{eq:MFIEweakly}
(\mathcal K_K \mathbf {a})(\mathbf{x}) = \int_{\Gamma}\left((\mathbf{n}(\mathbf{x})-\mathbf{n}(\mathbf{y}))\cdot \mathbf{a}(\mathbf{y})\nabla_{\mathbf {y}}G_K(\mathbf x-\mathbf {y})+\frac{\partial G_K(\mathbf{x}-\mathbf{y})}{\partial \mathbf{n}(\mathbf{x)}}\mathbf{a}(\mathbf{y})\right)d\sigma(\mathbf{y}),
\end{equation}
and
\begin{eqnarray}
\label{eq:EFIEweakly}
(\mathcal T_K \mathbf{a})(\mathbf{x})&=& iK\ \mathbf{n}(\mathbf{x})\times \int_{\Gamma}G_K(\mathbf{x}-\mathbf{y})\mathbf{a}(\mathbf{y})d\sigma(\mathbf {y})\nonumber\\
&-&\frac{i}{K} \int_{\Gamma}(\mathbf n(\mathbf{y})-\mathbf{n}(\mathbf{x}))\times\nabla_{\mathbf{x}}G_K(\mathbf{x}-\mathbf{y})\rm{div}_\Gamma \mathbf a(\mathbf{y})d\sigma(\mathbf{y})\nonumber\\
&-&\frac{i}{K}\int_{\Gamma}G_K(\mathbf x-\mathbf{y})\ \overrightarrow{\rm{curl}}_\Gamma\ \rm{div}_\Gamma \mathbf{a}(\mathbf{y})d\sigma(\mathbf{y}),
\end{eqnarray}
where $\mathbf{a}$ is a tangential vector field, and $K$ is a wavenumber, possibly complex. We note that in order to perform accurate evaluations of the operators $\mathcal{B}_{k,2,k,0.4 \mathcal{H}^{2/3}k^{1/3}}$ defined in equation~\eqref{eq:CFIERc}, we make use of the explicit cancellation of the composition of the hypersingular terms that enter the definition of the operators $ \mathcal{T}_k$ and $\mathcal{T}_{k+i\ 0.4\ \mathcal{H}^{2/3}k^{1/3}}$ respectively. Accordingly, the operators $\mathcal{B}_{k,2,k,0.4 \mathcal{H}^{2/3}k^{1/3}}$ are evaluated using their equivalent definition
\begin{eqnarray}\label{eq:sIE1}
\mathcal{B}_{k,2,k,0.4\mathcal{H}^{2/3}k^{1/3}}&=&I/2-\mathcal{K}_k- 2i(k+i\ 0.4\mathcal{H}^{2/3}k^{1/3})\mathcal{T}_k\ (\mathbf{n}\times \mathbf{S}_{k+i\ 0.4\mathcal{H}^{2/3}k^{1/3}})\nonumber\\
&+&\frac{2k}{k+i\ 0.4\mathcal{H}^{2/3}k^{1/3}}(\mathbf{n}\times\mathbf{S}_k)\ \mathcal{T}^{1}_{k+i\ 0.4\mathcal{H}^{2/3}k^{1/3}}{\rm div}_\Gamma.
\end{eqnarray}

The various integral operators that
enter equations~\eqref{eq:MFIEweakly} and~\eqref{eq:EFIEweakly} are computed in two stages which consist of (a) the
evaluation of the adjacent/singular interactions of sources (i.e. the
values of the density $\mathbf{a}(\mathbf y)$ or its derivatives for $\mathbf y$ on the
surface $\Gamma$) via the Green's functions (i.e. the terms that
involve $G_K(\mathbf{x}-\mathbf{y})$) when the target points $\mathbf{x}$ are close to the integration points $\mathbf{y}$
and (b) the accelerated evaluation of far-away interactions of sources
that are well-separated. The separation of these contributions is effected by floating POUs which are pairs of functions of the form
$(\eta_\mathbf{x}(\mathbf y),1-\eta_\mathbf{x}(\mathbf y))$ (where
$\eta_\mathbf{x}$ is a function with a ``small'' support which equals
1 in a neighborhood of the point $\mathbf{x}$). The approach for the solution of the integration problems (a) and (b) recounted above relies on use of smooth surface parametrizations of the
surface $\Gamma$ via a family of overlapping two-dimensional patches
$\mathcal{P}^{\ell},\ell=1,\ldots P$ together with smooth mappings to
coordinate sets $\mathcal{H}^\ell$ in two-dimensional space (where
actual integrations are performed) and subordinated partitions of
unity i.e smooth functions $w_\ell$ supported on $\mathcal{P}^\ell$
such that $\sum_\ell w_\ell=1$ throughout $\Gamma$. This framework
allows us to (i) reduce the integration of the tangential densities
$\mathbf{a}$ over the surface $\Gamma$ to calculations of integrals of
smooth vector fields $\mathbf{a}^\ell$ compactly supported in the
planar sets $\mathcal{H}^\ell$ and (ii) compute the derivatives of the
density $\mathbf{a}$ via derivatives of smooth and periodic functions
only~\cite{turc1,turc3}. The first part (a) requires the analytic resolution
of weakly singular Green's functions (i.e. the order of the singularity is $\mathcal
O(|\mathbf x -\mathbf y|^{-1})$) which is performed via polar changes
of variables whose Jacobian cancels the singularity and interpolation
procedures that allow for evaluations of the densities at radial
integration points. On the other hand, part (b) corresponds to the
acceleration procedure performed based on {\em two-face planar arrays}
of ``equivalent sources'' that allow for 3D FFT based fast evaluations
of cartesian grid convolutions of the equivalent source intensities
with Green's functions~\cite{BrunoKun,turc3}. The various boundary integral operators that are components of the operators described in equations~\eqref{eq:MFIEweakly} and~\eqref{eq:EFIEweakly} are vector versions of the scalar operators presented in~\cite{turc3}. A lengthy discussion was given in~\cite{turc3} on the accelerated evaluation of those boundary integral operators and on the specific choices of the various parameters that are needed in the accelerators. The accelerated evaluation of electromagentic integral operators of the type~\eqref{eq:MFIEweakly} and~\eqref{eq:EFIEweakly} use the following four parameters: (1) the number $L^3$ of small boxes that make up the partition of a box enclosing the scatterer; (2) the number $M^{eq}$ of equivalent sources placed on faces of each of the small boxes so that the far-field contributions of the equivalent sources matches in the least square sense the one of the real surface sources; (3) the number $n^{coll}$ of collocation points needed in the solution of the least-square problem related to equivalent sources expansions; and (4) the number $n^w$ of plane waves needed for the evaluation of the fields on the scatterer from the FFTs based computations of the fields arising from the Cartesian grids placed equivalent sources. As a result of the acceleration procedure, and for all of the formulations considered in this text the cost of one matrix-vector product is $\mathcal{O}(N^{4/3}\log{N})$, where $N$ is the number of discretization points---see Table~\ref{results33}.
\begin{remark} We note that Galerkin discretizations of the Calder\'on-Ikawa CFIER operators $\mathcal{B}_{k,1,0,k}$ were presented in~\cite{Andriulli}; the algorithms presented therein ought to be easily extendable to the Galerkin discretization of the operators $\mathcal{B}_{k,2,k,0.4\mathcal{H}^{2/3}k^{1/3}}$. Very recently, Galerkin discretization of the operators
 \begin{equation*}
PS\mathcal{B}_{k,2,k,0.4\mathcal{H}^{2/3}k^{1/3}}=\frac{I}{2}-\mathcal{K}_k-2\mathcal{T}_k\  PS(\mathcal{T}_{k+i\ 0.4\mathcal{H}^{2/3}k^{1/3}}),
\end{equation*}
were presented in~\cite{Boujaji}, where the square root operators featured in the definition of the principal symbol operators were replaced by their Pad\'e approximations. However, the results based on this formulations appear to be less accurate than those based on the classical CFIE formulation~\cite{Boujaji}. The implementation of the same operators in the framework of our Nystr\"om discretization is currently underway. The main difficulty is handling with care the composition $\mathcal{T}_k\  PS(\mathcal{T}_{k+i\ 0.4\mathcal{H}^{2/3}k^{1/3}})$ along the lines of equation~\eqref{eq:sIE1}---note that per equation~\eqref{eq:defPSR} the operator $PS(\mathcal{T}_{k+i\ 0.4\mathcal{H}^{2/3}k^{1/3}})$ features second order surface derivatives. Thus, in order to prevent losses of accuracy, the evaluation of the operator $PS(\mathcal{T}_{k+i\ 0.4\mathcal{H}^{2/3}k^{1/3}})$, which is a Fourier multiplier composed to a second order differential operator, should be pursued using discrete Helmholtz decompositions and FFTs.
\end{remark}

In what follows we present the major steps of the accelerated algorithms for the evaluation of the matrix vector products associated with each of the three operators $\mathcal{C}_k$, $\mathcal{A}_{k,S_{ik/2}}$, and $\mathcal{B}_{k,2,k,0.4\mathcal{H}^{2/3}k^{1/3}}$.

\begin{center}
{\em Algorithm~I: Accelerated CFIE algorithm for the evaluation of the operator $\mathcal{C}_k$~\eqref{eq:cIE}}
\end{center}
\begin{enumerate}
\item For a given tangential density $\mathbf{a}$, evaluate $\mathbf{n}\times \mathbf{a}$ and compute the surface derivatives ${\rm div}_\Gamma(\mathbf{n}\times \mathbf{a})$, $\overrightarrow{\rm{curl}}_\Gamma\ \rm{div}_\Gamma (\mathbf{n}\times \mathbf{a})$ and combine the densities $\mathbf{a}$, $\mathbf{n}\times \mathbf{a}$, ${\rm div}_\Gamma(\mathbf{n}\times \mathbf{a})$, and $\overrightarrow{\rm{curl}}_\Gamma\ \rm{div}_\Gamma (\mathbf{n}\times \mathbf{a})$ into an extended vector density $\tilde{\mathbf{a}}$;
\item Apply the operators $\mathcal{K}_k$ and $\mathcal{T}_k$ to the
corresponding components of the vector density
   $\tilde{\mathbf{a}}$---using accelerated algorithms with wavenumber $k$ and lumping common kernel components that enter in the
   definitions of $\mathcal{K}_k$ and $\mathcal{T}_k$.
\end{enumerate}

\begin{center}
{\em Algorithm~II: Accelerated CFIER algorithm for the evaluation of the operator $\mathcal{A}_{k,S_{ik/2}}$~\eqref{eq:fIE}}
\end{center}
\begin{enumerate}
\item For a given tangential density $\mathbf{a}$, evaluate $\mathbf{n}\times \mathbf{S}_{ik/2}\mathbf{a}$ using accelerated algorithms with imaginary wavenumbers $ik/2$;
\item Compute the surface derivatives ${\rm div}_\Gamma(\mathbf{n}\times \mathbf{S}_{ik/2}\mathbf{a})$, $\overrightarrow{\rm{curl}}_\Gamma\ {\rm div}_\Gamma (\mathbf{n}\times \mathbf{S}_{ik/2}\mathbf{a})$ and combine the densities $\mathbf{a}$, $\mathbf{n}\times \mathbf{S}_{ik/2}\mathbf{a}$, ${\rm div}_\Gamma(\mathbf{n}\times \mathbf{S}_{ik/2}\mathbf{a})$, and $\overrightarrow{{\rm curl}}_\Gamma\ {\rm div}_\Gamma (\mathbf{n}\times \mathbf{S}_{ik/2}\mathbf{a})$ into an extended vector density $\tilde{\mathbf{a}}$;
\item Apply the operators $\mathcal{K}_k$ and $\mathcal{T}_k$ to the
corresponding components of the vector density
   $\tilde{\mathbf{a}}$---using accelerated algorithms with wavenumber $k$ and lumping common kernel components that enter in the
   definitions of $\mathcal{K}_k$ and $\mathcal{T}_k$.
\end{enumerate}

\begin{center}
{\em Algorithm~III: Accelerated CFIER algorithm for the evaluation of the operator $\mathcal{B}_{k,2,k,0.4\mathcal{H}^{2/3}k^{1/3}}$~\eqref{eq:sIE1}}
\end{center}
\begin{enumerate}
\item For a given tangential density $\mathbf{a}$, compute the surface derivatives ${\rm div}_\Gamma\mathbf{a}$, $\overrightarrow{\rm{curl}}_\Gamma\ \rm{div}_\Gamma \mathbf{a}$ and combine the densities $\mathbf{a}$, ${\rm div}_\Gamma \mathbf{a}$, and $\overrightarrow{\rm{curl}}_\Gamma\ \rm{div}_\Gamma \mathbf{a}$ into an extended vector density $\tilde{\mathbf{a}}_1$;
\item Apply the operators $\mathbf{n}\times \mathbf{S}_{k+i0.4\mathcal{H}^{2/3}k^{1/3}}$ and $\mathcal{T}_{k+i0.4\mathcal{H}^{2/3}k^{1/3}}^{1}{\rm div}_\Gamma$ to the corresponding components of the extended vector density $\tilde{\mathbf{a}}_1$ using accelerated algorithms with complex wavenumbers $k+i0.4\mathcal{H}^{2/3}k^{1/3}$ and lumping common kernel components that enter the definitions of the operators $\mathbf{n}\times \mathbf{S}_{k+i0.4\mathcal{H}^{2/3}k^{1/3}}$ and $\mathcal{T}_{k+i0.4\mathcal{H}^{2/3}k^{1/3}}^{1}{\rm div}_\Gamma$;
\item Compute the surface derivatives ${\rm div}_\Gamma(\mathbf{n}\times \mathbf{S}_{k+i0.4\mathcal{H}^{2/3}k^{1/3}}\mathbf{a})$, $\overrightarrow{{\rm curl}}_\Gamma\ {\rm div}_\Gamma (\mathbf{n}\times \mathbf{S}_{k+i0.4\mathcal{H}^{2/3}k^{1/3}}\mathbf{a})$ and combine the densities $\mathbf{a}$, $\mathbf{n}\times \mathbf{S}_{k+i0.4\mathcal{H}^{2/3}k^{1/3}}\mathbf{a}$, ${\rm div}_\Gamma(\mathbf{n}\times \mathbf{S}_{k+i0.4\mathcal{H}^{2/3}k^{1/3}}\mathbf{a})$, and $\overrightarrow{{\rm curl}}_\Gamma\ {\rm div}_\Gamma (\mathbf{n}\times \mathbf{S}_{k+i0.4\mathcal{H}^{2/3}k^{1/3}}\mathbf{a})$ into an extended vector density $\tilde{\mathbf{a}}_2$;
\item Apply the operators $\mathcal{K}_k$ and $\mathcal{T}_k$ to the
corresponding components of the vector density
   $\tilde{\mathbf{a}}_2$---using accelerated algorithms with wavenumber $k$ and lumping common kernel components that enter in the
   definitions of $\mathcal{K}_k$ and $\mathcal{T}_k$; at the same time apply the operator $\mathbf{n}\times\mathbf{S}_k$ to the density $\mathcal{T}^{1}_{k+i0.4\mathcal{H}^{2/3}k^{1/3}}{\rm div}_\Gamma\mathbf{a}$ using accelerated algorithms with wavenumber $k$;
\item Perform a linear combination of the results obtained in the previous step according to the formula~\eqref{eq:sIE1}.
\end{enumerate}

Solutions of the linear systems arising from the
discretization of the boundary integral equations
under consideration, namely those based on the boundary integral operators $\mathcal{B}_{k,2,k,0.4\mathcal{H}^{2/3}k^{1/3}}$, $\mathcal{A}_{k,S_{ik/2}}$, and $\mathcal{C}_k$ defined in equations~\eqref{eq:CFIERc},~\eqref{eq:fIE}, and~\eqref{eq:cIE} are
obtained by means of the fully complex version of the iterative solver GMRES~\cite{SaadSchultz} without restart.  All of the results contained in the tables presented in this section were
obtained by prescribing a GMRES residual tolerance equal to $10^{-4}$. We present results for four scattering surfaces: a sphere of radius
one, an elongated ellipsoid of principal axes $2$, $0.5$ and $0.5$, an ellipsoid of principal axes $2$, $0.5$, and $2$, and a non-convex bean shaped geometry given by the equation~\cite{BrunoKun}
$$\frac{x^{2}}{a^{2}(1-\alpha_3\cos{\frac{\pi z}{R}})}+\frac{(\alpha_1
  R\cos{\frac{\pi z}{R}})^{2}}{b^{2}(1-\alpha_2\cos{\frac{\pi
      z}{R}})}+\frac{z^{2}}{c^{2}}=R^{2},$$ with $a=0.8$, $b=0.8$,
$c=1$, $\alpha_1=0.3$, $\alpha_2=0.4$, $\alpha_3=0.1$, and
$R=1$. We consider scattering problems involving wavenumbers that correspond to scatterers whose diameters $D$ correspond to $10.2\lambda$, $20.4\lambda$, $30.6\lambda$, $40.8\lambda$, and $51.0\lambda$ 
respectively.  All of the results contained in the tables presented in this section were
obtained by  using discretizations corresponding to either $6$ or $9$ points/wavelength, leading to discretizations of size $N$. Given the coercivity results established in Section~\ref{spec_prop_cfie} for spherical scatterers, and which are probably true for the four scatterers considered (see reference~\cite{turc7} for numerical evidence of similar claims in the scalar case), discretizations corresponding to $6$ points/wavelength lead to equally accurate results throughout the very wide frequency range considered. For every scattering experiment we present the maximum relative error amongst
all directions $\hat{\mathbf{x}}=\frac{\mathbf{x}}{|\mathbf{x}|}$ of
the far field $\mathbf{E}_\infty(\hat{\mathbf{x}})$:
\begin{equation}
\label{eq:far_field}
\mathbf{E}(\mathbf{x})=\frac{e^{ik|\mathbf{x}|}}{|\mathbf{x}|}\left(\mathbf{E}_\infty(\hat{\mathbf{x}})+\mathcal{O}\left(\frac{1}{|\mathbf{x}|}\right)\right)\quad{\mbox as}\quad
|\mathbf{x}|\rightarrow\infty.\\
\end{equation}
The maximum relative far-field error, which we denote by $\varepsilon_\infty$,
\begin{equation}
\label{eq:farField_error}
\varepsilon_\infty=\frac{\max_{\hat{\mathbf{x}}}|\mathbf{E}_\infty^{\rm
  calc}(\hat{\mathbf{x}})-\mathbf{E}_\infty^{\rm ref}(\hat{\mathbf{x}})|}{\max_{\hat{\mathbf{x}}}|\mathbf{E}_\infty^{\rm ref}(\hat{\mathbf{x}})|},
\end{equation}
was evaluated in our numerical examples as the maximum difference evaluated at sufficiently many points between far fields
$\mathbf{E}_\infty^{\rm calc}$ obtained from our numerical solutions
and corresponding far fields $\mathbf{E}_\infty^{\rm ref}$ associated
with reference solutions. The reference solutions $\mathbf{E}_\infty^{\rm ref}$ were computed by Mie series in the case of spherical scatterers and by use of Combined Field Integral Equations based on the boundary integral operator $\mathcal{C}_k$ defined in equation~\eqref{eq:cIE} and the same levels of discretization. We used accelerator parameters that lead to small computational times and memory usage, while delivering results with about three digits of accuracy in the far-field metrics $\varepsilon_\infty$. Specifically, in all our numerical experiments we have used the following three accelerator parameter values: $M^{eq}=6$, $n^{coll}=10$, and $n^w=6$. The values of the remaining accelerator parameters were chosen as follows: $L=6$ for scattering problems with $D=10.2\lambda$; $L=12$ for scattering problems with $D=20.4\lambda$; $L=16$ for scattering problems with $D=30.6\lambda$; $L=24$ for scattering problems with $D=40.8\lambda$; and $L=30$ for scattering problems with $D=51.0\lambda$. The computational times and the memory usage reported resulted from a  C++ numerical implementation of our various accelerated algorithms on a workstation with 16 cores, 24 GB RAM and each processor is $2.27$ GHz Intel (R) Xeon. All of the times reported correspond to runs performed on a single processor running GNU/Linux, and
using the GNU/gcc compiler, the PETSC 3.0 library for the fully
complex implementation of GMRES, and the FFTW3 library for evaluation
of FFTs.  

We begin the presentation of our numerical results with an illustration in Table~\ref{results30} of the accuracy that can be achieved by our accelerated solvers for two scatterers: a sphere of diameter $20.4\lambda$ and an ellipsoid of size $20.4\lambda\times 5.1\lambda\times 5.1\lambda$. We used two discretizations corresponding to $6$ points and respectively $9$ points per wavelength. In the case of the sphere, the far-field errors were computed using as reference solutions given by Mie series, whereas in the case of the ellipsoid the reference solution was computed using the CFIE formulation based on the operator $\mathcal{C}_k$ and a finer discretization corresponding to $12$ points per wavelength. As it can be seen, the numbers of iterations required to reach the desired GMRES residual are stable in the case of solvers based on the second kind Fredholm operators $\mathcal{A}_{k,S_{ik/2}}$ and $\mathcal{B}_{k,2,k,0.4\mathcal{H}^{2/3}k^{1/3}}$, while in the case of solvers based on the CFIE operators $\mathcal{C}_k$ the numbers of iterations grow with the size of the discretization.
\begin{table}
\begin{center}
\resizebox{!}{1.6cm}{
\begin{tabular}{|c|c|c|c|c|c|c|c|}
\hline
$D$  & $N$ & \multicolumn{2}{c|}{$\mathcal{C}_k$~\eqref{eq:cIE}} & \multicolumn{2}{c|}{$\mathcal{A}_{k,S_{ik/2}}$~\eqref{eq:fIE}} & \multicolumn{2}{c|}{$\mathcal{B}_{k,2,k,0.4\mathcal{H}^{2/3}k^{1/3}}$~\eqref{eq:CFIERc}}\\
\cline{3-8}
& & It & $\epsilon_\infty$ & It & $\epsilon_\infty$ & It & $\epsilon_\infty$\\
\hline
Sphere & 193,548 & 43 & 2.5 $\times$ $10^{-3}$ & 21  & 3.0 $\times$ $10^{-3}$ & 8 & 1.1 $\times$ $10^{-3}$\\
\hline
Sphere & 437,772 & 50 & 1.5 $\times$ $10^{-4}$ & 21 & 4.8 $\times$ $10^{-4}$ & 8 & 2.3 $\times$ $10^{-4}$\\
\hline
Ellipsoid & 193,548 & 75 & 1.1 $\times$ $10^{-3}$ & 15 & 9.4 $\times$ $10^{-4}$ & 11 & 9.3 $\times$ $10^{-4}$\\
\hline
Ellipsoid & 437,772 & 93 & 9.4 $\times$ $10^{-5}$ & 15 & 1.8 $\times$ $10^{-4}$& 10  & 1.0 $\times$ $10^{-4}$\\
\hline
\end{tabular}}
\caption{\label{results30}{ Convergence of our solvers based on formulations that involve the boundary integral operators $\mathcal{C}_k$, $\mathcal{A}_{k,S_{ik/2}}$, and  $\mathcal{B}_{k,2,k,0.4\mathcal{H}^{2/3}k^{1/3}}$. Accelerated computations for a sphere of diameter $20.4\lambda$ and an ellipsoid of size $20.4\lambda\times 5.1\lambda\times 5.1\lambda$ for two discretizations corresponding to $6$ points and respectively $9$ points per wavelength, under $x$-polarized plane wave normal incidence.}}
\end{center}
\end{table}

We illustrate in Tables~\ref{results31}-\ref{results33b} the performance of our accelerated solvers based on formulations that involve the boundary integral operators $\mathcal{C}_k$, $\mathcal{A}_{k,S_{ik/2}}$, and  $\mathcal{B}_{k,2,k,0.4\mathcal{H}^{2/3}k^{1/3}}$. In all the numerical experiments presented in Tables~\ref{results31}-\ref{results33}--which correspond to strictly convex scatterers, we assumed $x$-polarized plane wave normal incidence. The numerical experiments related to the non-convex bean-shaped scatterer in Table~\ref{results33b} assumed the polarization
$(0,-\frac{1}{\sqrt{2}},\frac{1}{\sqrt{2}})$ and, in order for the
configuration to give rise to multiple reflections, a direction of
incidence
$(-\frac{\sqrt{3}}{3},-\frac{\sqrt{3}}{3},-\frac{\sqrt{3}}{3})$. As it can be seen from the results presented in Tables~\ref{results31}-\ref{results33b}, the numbers of GMRES iterations associated with solvers based on the operators $\mathcal{C}_k$ and $\mathcal{A}_{k,S_{ik/2}}$ grow with the frequency, which is in an accord with the theoretical predictions for spherical scatterers presented in Section~\ref{spec_prop_cfie}. In contrast, solvers based on the operators $\mathcal{B}_{k,2,k,0.4\mathcal{H}^{2/3}k^{1/3}}$ appear to require numbers of GMRES iterations that do not depend on the frequency for both convex and non-convex scatterers whose curvatures vary slowly. We note that in the high-frequency range presented in Tables~\ref{results31}-\ref{results33b}, the solvers based on the operators $\mathcal{B}_{k,2,k,0.4\mathcal{H}^{2/3}k^{1/3}}$ outperform the solvers based on the classical CFIE operators $\mathcal{C}_k$ in each numerical experiment performed, and the gains provided by the use of the former formulations over those using the latter formulations become more significant with increased problem size. Notably, these gains can be up to factors of $3.3$ for problems of $51$ wavelengths in electromagnetic size. Also, solvers based on the operators $\mathcal{A}_{k,S_{ik/2}}$ consistently outperform those based on the classical CFIE operators $\mathcal{C}_k$ in terms of computational times, and the gains can be up to factors of $2.6$ for high-frequencies. Furthermore, solvers based on the operators $\mathcal{B}_{k,2,k,0.4\mathcal{H}^{2/3}k^{1/3}}$ outperform those based on the operators $\mathcal{A}_{k,S_{ik/2}}$ in terms of computational times, in some cases for all frequencies considered (e.g. Table~\ref{results31}, Table~\ref{results33}, and Table~\ref{results33b}), and in the other case for higher frequencies (e.g. Table~\ref{results32}); these gains can be up to a factor of $2$ for the bean-shaped scatterer of size $51$ wavelengths. Given that the levels of accuracy reached by our solvers seems to be commensurate for all the formulations considered, formulations based on the operators $\mathcal{B}_{k,2,k,0.4\mathcal{H}^{2/3}k^{1/3}}$ appear to be very suitable for high-frequency simulations.

\begin{table}
\begin{center}
\resizebox{!}{1.6cm}{
\begin{tabular}{|c|c|c|c|c|c|c|c|}
\hline
$D$  & $N$ & \multicolumn{2}{c|}{$\mathcal{C}_k$~\eqref{eq:cIE}} & \multicolumn{2}{c|}{$\mathcal{A}_{k,S_{ik/2}}$~\eqref{eq:fIE}} & \multicolumn{2}{c|}{$\mathcal{B}_{k,2,k,0.4\mathcal{H}^{2/3}k^{1/3}}$~\eqref{eq:CFIERc}}\\
\cline{3-8}
& & It/ Total time & $\epsilon_\infty$ & It/ Total time & $\epsilon_\infty$ & It/ Total time & $\epsilon_\infty$\\
\hline
10.2 $\lambda$  & 47,628 & 37/0.38 h & 8.0 $\times$ $10^{-4}$& 16/0.25 h & 1.0 $\times$ $10^{-3}$ & 8/0.18 h & 1.0 $\times$ $10^{-3}$\\
\hline
20.4 $\lambda$ & 193,548 & 43/2.48 h & 2.5 $\times$ $10^{-3}$ & 21/1.89 h & 3.0 $\times$ $10^{-3}$ & 8/1.09 h & 1.1 $\times$ $10^{-3}$\\
\hline
30.6 $\lambda$& 437,772 & 48/7.71 h & 1.9 $\times$ $10^{-3}$ & 25/6.60 h & 2.7 $\times$ $10^{-3}$& 8/3.17 h & 2.4 $\times$ $10^{-3}$\\
\hline
40.8 $\lambda$& 780,300 & 54/17.6 h & 3.0 $\times$ $10^{-3}$ & 29/15.0 h & 3.8 $\times$ $10^{-3}$ & 8/6.34 h & 2.6 $\times$ $10^{-3}$\\
\hline
51.0 $\lambda$& 1,175,628 & 58/35.8 h & 4.0 $\times$ $10^{-3}$ & 32/33.4 h & 4.5 $\times$ $10^{-3}$ & 8/12.1 h & 3.1 $\times$ $10^{-3}$\\
\hline
\end{tabular}}
\caption{\label{results31}{ Performance of our solvers based on formulations that involve the boundary integral operators $\mathcal{C}_k$, $\mathcal{A}_{k,S_{ik/2}}$, and  $\mathcal{B}_{k,2,k,0.4\mathcal{H}^{2/3}k^{1/3}}$. Accelerated computations for spheres of diameters $D$, $x$-polarized plane wave normal incidence.}}
\end{center}
\end{table}

\begin{table}
\begin{center}
\resizebox{!}{1.6cm}{
\begin{tabular}{|c|c|c|c|c|c|c|}
\hline
$D$  & $N$ & \multicolumn{1}{c|}{$\mathcal{C}_k$~\eqref{eq:cIE}} & \multicolumn{2}{c|}{$\mathcal{A}_{k,S_{ik/2}}$~\eqref{eq:fIE}} & \multicolumn{2}{c|}{$\mathcal{B}_{k,2,k,0.4\mathcal{H}^{2/3}k^{1/3}}$~\eqref{eq:CFIERc}}\\
\cline{3-7}
& & It/ Total time & It/ Total time & $\epsilon_\infty$ & It/ Total time & $\epsilon_\infty$\\
\hline
10.2 $\lambda$  & 47,628 & 69/0.70 h & 14/0.21 h & 6.2 $\times$ $10^{-4}$ & 11/0.25 h & 7.6 $\times$ $10^{-4}$\\
\hline
20.4 $\lambda$ & 193,548 & 75/4.32 h & 15/1.35 h & 9.3 $\times$ $10^{-4}$ & 11/1.48 h & 1.0 $\times$ $10^{-3}$\\
\hline
30.6 $\lambda$& 437,772 & 80/12.8 h & 16/4.22 h & 6.7 $\times$ $10^{-4}$& 11/4.33 h & 7.7 $\times$ $10^{-4}$\\
\hline
40.8 $\lambda$& 780,300 & 86/28.0 h & 18/9.30 h & 8.6 $\times$ $10^{-4}$ & 11/8.72 h & 9.2 $\times$ $10^{-4}$\\
\hline
51.0 $\lambda$& 1,175,628 & 89/54.9 h & 20/20.8 h & 1.3 $\times$ $10^{-3}$ & 11/16.6 h & 1.1 $\times$ $10^{-3}$\\
\hline
\end{tabular}}
\caption{\label{results32}{Performance of our solvers based on formulations that involve the boundary integral operators $\mathcal{C}_k$, $\mathcal{A}_{k,S_{ik/2}}$, and  $\mathcal{B}_{k,2,k,0.4\mathcal{H}^{2/3}k^{1/3}}$. Accelerated computations for ellipsoids of size $D\times D/4\times D/4$, $x$-polarized plane wave normal incidence.}}
\end{center}
\end{table}

\begin{table}
\begin{center}
\resizebox{!}{1.6cm}{
\begin{tabular}{|c|c|c|c|c|c|c|}
\hline
$D$  & $N$ & \multicolumn{1}{c|}{$\mathcal{C}_k$~\eqref{eq:cIE}} & \multicolumn{2}{c|}{$\mathcal{A}_{k,S_{ik/2}}$~\eqref{eq:fIE}} & \multicolumn{2}{c|}{$\mathcal{B}_{k,2,k,0.4\mathcal{H}^{2/3}k^{1/3}}$~\eqref{eq:CFIERc}}\\
\cline{3-7}
& & It/ Total time & It/ Total time & $\epsilon_\infty$ & It/ Total time & $\epsilon_\infty$\\
\hline
10.2 $\lambda$  & 47,628 & 68/0.69 h & 19/0.29 h & 6.8 $\times$ $10^{-4}$ & 11/0.25 h & 1.1 $\times$ $10^{-3}$\\
\hline
20.4 $\lambda$ & 193,548 & 73/4.21 h & 20/1.80 h & 6.9 $\times$ $10^{-4}$ & 11/1.48 h & 9.5 $\times$ $10^{-4}$\\
\hline
30.6 $\lambda$& 437,772 & 76/12.16 h & 21/5.54 h & 6.6 $\times$ $10^{-4}$& 11/4.33 h & 7.0 $\times$ $10^{-4}$\\
\hline
40.8 $\lambda$& 780,300 & 78/25.39 h & 22/11.3 h & 7.5 $\times$ $10^{-4}$ & 10/7.93 h & 9.7 $\times$ $10^{-4}$\\
\hline
51.0 $\lambda$& 1,175,628 & 79/48.7 h & 23/24.0 h & 7.9 $\times$ $10^{-4}$ & 10/15.1 h & 8.8 $\times$ $10^{-4}$\\
\hline
\end{tabular}}
\caption{\label{results33}{Performance of our solvers based on formulations that involve the boundary integral operators $\mathcal{C}_k$, $\mathcal{A}_{k,S_{ik/2}}$, and  $\mathcal{B}_{k,2,k,0.4\mathcal{H}^{2/3}k^{1/3}}$. Accelerated computations for ellipsoids of size $D\times D/4\times D$, $x$-polarized plane wave normal incidence.}}
\end{center}
\end{table}

\begin{table}
\begin{center}
\resizebox{!}{1.6cm}{
\begin{tabular}{|c|c|c|c|c|c|c|}
\hline
$D$  & $N$ & \multicolumn{1}{c|}{$\mathcal{C}_k$~\eqref{eq:cIE}} & \multicolumn{2}{c|}{$\mathcal{A}_{k,S_{ik/2}}$~\eqref{eq:fIE}} & \multicolumn{2}{c|}{$\mathcal{B}_{k,2,k,0.4\mathcal{H}^{2/3}k^{1/3}}$~\eqref{eq:CFIERc}}\\
\cline{3-7}
& & It/ Total time & It/ Total time & $\epsilon_\infty$ & It/ Total time & $\epsilon_\infty$\\
\hline
10.2 $\lambda$  & 47,628 & 56/0.71 h & 17/0.32 h & 7.1 $\times$ $10^{-4}$ & 10/0.29 h & 5.0 $\times$ $10^{-4}$\\
\hline
20.4 $\lambda$ & 193,548 & 68/5.01 h & 23/2.50 h & 6.5 $\times$ $10^{-4}$ & 11/1.83 h & 3.7 $\times$ $10^{-4}$\\
\hline
30.6 $\lambda$& 437,772 & 74/15.7 h & 27/8.49 h & 5.6 $\times$ $10^{-4}$& 11/5.34 h & 3.0 $\times$ $10^{-4}$\\
\hline
40.8 $\lambda$& 780,300 & 82/35.2 h & 31/20.4 h & 4.9 $\times$ $10^{-4}$ & 11/10.8 h & 3.3 $\times$ $10^{-4}$\\
\hline
51.0 $\lambda$& 1,175,628 & 88/69.7 h & 35/43.4 h & 4.4 $\times$ $10^{-4}$ & 11/20.7 h & 3.1 $\times$ $10^{-4}$\\
\hline
\end{tabular}}
\caption{\label{results33b}{Performance of our solvers based on formulations that involve the boundary integral operators $\mathcal{C}_k$, $\mathcal{A}_{k,S_{ik/2}}$, and  $\mathcal{B}_{k,2,k,0.4\mathcal{H}^{2/3}k^{1/3}}$. Accelerated computations for bean-shaped scatterers of diameters $D$, under plane-wave incidence of direction $(-\frac{\sqrt{3}}{3},-\frac{\sqrt{3}}{3},-\frac{\sqrt{3}}{3})$ and polarization
$(0,-\frac{1}{\sqrt{2}},\frac{1}{\sqrt{2}})$.
}}
\end{center}
\end{table}

We illustrate in Table~\ref{results34} statistics of the memory requirements and costs of one matrix-vector product associated to solvers based on formulations that involve the boundary integral operators $\mathcal{C}_k$, $\mathcal{A}_{k,S_{ik/2}}$, and  $\mathcal{B}_{k,2,k,0.4\mathcal{H}^{2/3}k^{1/3}}$. As it can be seen, the cost of one matrix-vector product of all of these formulations is $\mathcal{O}(N^{4/3}\log{N})$. The computational times required by one matrix-vector product for the cases of the sphere and the ellipsoid are virtually the same given that the parametrizations of these scatterers was constructed identically. Two more conclusions can be drawn from the results in Table~\ref{results34}. The computational times required by the bean-shaped cases are slightly larger on account of our use of larger overlaps between the patches in the chart atlas. This is necessary in this case to achieve comparable levels of accuracy as for the sphere and ellipsoid configurations. A matrix-vector product resulted from
discretization of the formulations based on the operators $\mathcal{A}_{k,S_{ik/2}}$ is on
average at most $1.6$ more computationally expensive than the
matrix-vector product for the CFIE formulation based on the operators $\mathcal{C}_k$ at
the same level of discretization. A matrix-vector product resulted from
discretization of the formulations based on the operators $\mathcal{B}_{k,2,k,0.4\mathcal{H}^{2/3}k^{1/3}}$ is on
average at most $2.5$ more computationally expensive than the
matrix-vector product for the CFIE formulation based on the operators $\mathcal{C}_k$ at
the same level of discretization.

\begin{table}
\begin{center}
\resizebox{!}{1.8cm}{
\begin{tabular}{|c|c|c|c|c|c|c|c|}
\hline
$D$  & $N$ & \multicolumn{2}{c|}{$\mathcal{C}_k$~\eqref{eq:cIE}} & \multicolumn{2}{c|}{$\mathcal{A}_{k,S_{ik/2}}$~\eqref{eq:fIE}} & \multicolumn{2}{c|}{$\mathcal{B}_{k,2,k,0.4\mathcal{H}^{2/3}k^{1/3}}$~\eqref{eq:CFIERc}}\\
\cline{3-8}
& & Mem & Time/It & Mem & Time/It &  Mem & Time/It \\
\hline
10.2 $\lambda$  & 47,628 & 0.1 Gb & 0.61 m  & 0.16 Gb & 0.93 m & 0.22 Gb & 1.38 m \\
\hline
20.4 $\lambda$ & 193,548 & 0.4 Gb & 3.46 m & 0.64 Gb & 5.40 m & 0.88 Gb & 8.10 m \\
\hline
30.6 $\lambda$& 437,772 & 0.9 Gb & 9.63 m & 1.44 Gb & 15.84 m & 1.98 Gb & 23.64 m \\
\hline
40.8 $\lambda$& 780,300 & 1.6 Gb & 19.55 m & 2.5 Gb & 31.03 m & 3.5 Gb & 47.58 m \\
\hline
51.0 $\lambda$& 1,175,628 & 2.5 Gb & 37.03 m & 3.9 Gb & 62.62 m & 5.5 Gb & 90.60 m \\
\hline
\end{tabular}}
\caption{\label{results34}{ Statistics of the memory requirements and costs of one matrix-vector product associated to solvers based on formulations that involve the boundary integral operators $\mathcal{C}_k$, $\mathcal{A}_{k,S_{ik/2}}$, and  $\mathcal{B}_{k,2,k,0.4\mathcal{H}^{2/3}k^{1/3}}$. The cost of one matrix-vector product of all of these formulations is $\mathcal{O}(N^{4/3}\log{N})$.}}
\end{center}
\end{table}

We conclude the numerical experiments with an illustration in Figure~\ref{fig:iteration_counts} of the iteration counts required by our solvers based on the integral operators $\mathcal{A}_{k,S_{ik/2}}$ and $\mathcal{B}_{k,2,k,0.4\mathcal{H}^{2/3}k^{1/3}}$ respectively to reach GMRES residuals of $10^{-4}$ for two scatterers, namely a unit sphere and an ellipsoid with principal axes $2$, $0.5$, and $2$, and $121$ wavenumbers $k=8,9,\ldots,127,128$. The corresponding electromagnetic sizes of the scattering problems range from $2.5$ to $40.8$ wavelengths. As it can be seen, the number of GMRES iterations required by our solvers based on the operators $\mathcal{B}_{k,2,k,0.4\mathcal{H}^{2/3}k^{1/3}}$ are independent of frequency. In the case of the spherical scatterers and bean-shaped scatterers, the runtimes of our solvers based on the operators $\mathcal{B}_{k,2,k,0.4\mathcal{H}^{2/3}k^{1/3}}$ are always smaller than those based on the operators $\mathcal{A}_{k,S_{ik/2}}$. For ellipsoid scatterers, the runtimes of our solvers based on the operators $\mathcal{B}_{k,2,k,0.4\mathcal{H}^{2/3}k^{1/3}}$ are smaller than those based on the operators $\mathcal{A}_{k,S_{ik/2}}$ for scattering problems corresponding to wavenumbers $k$ that are greater than equal to the value $k_c=98$.

\begin{figure}
\centering
\includegraphics[height=85mm]{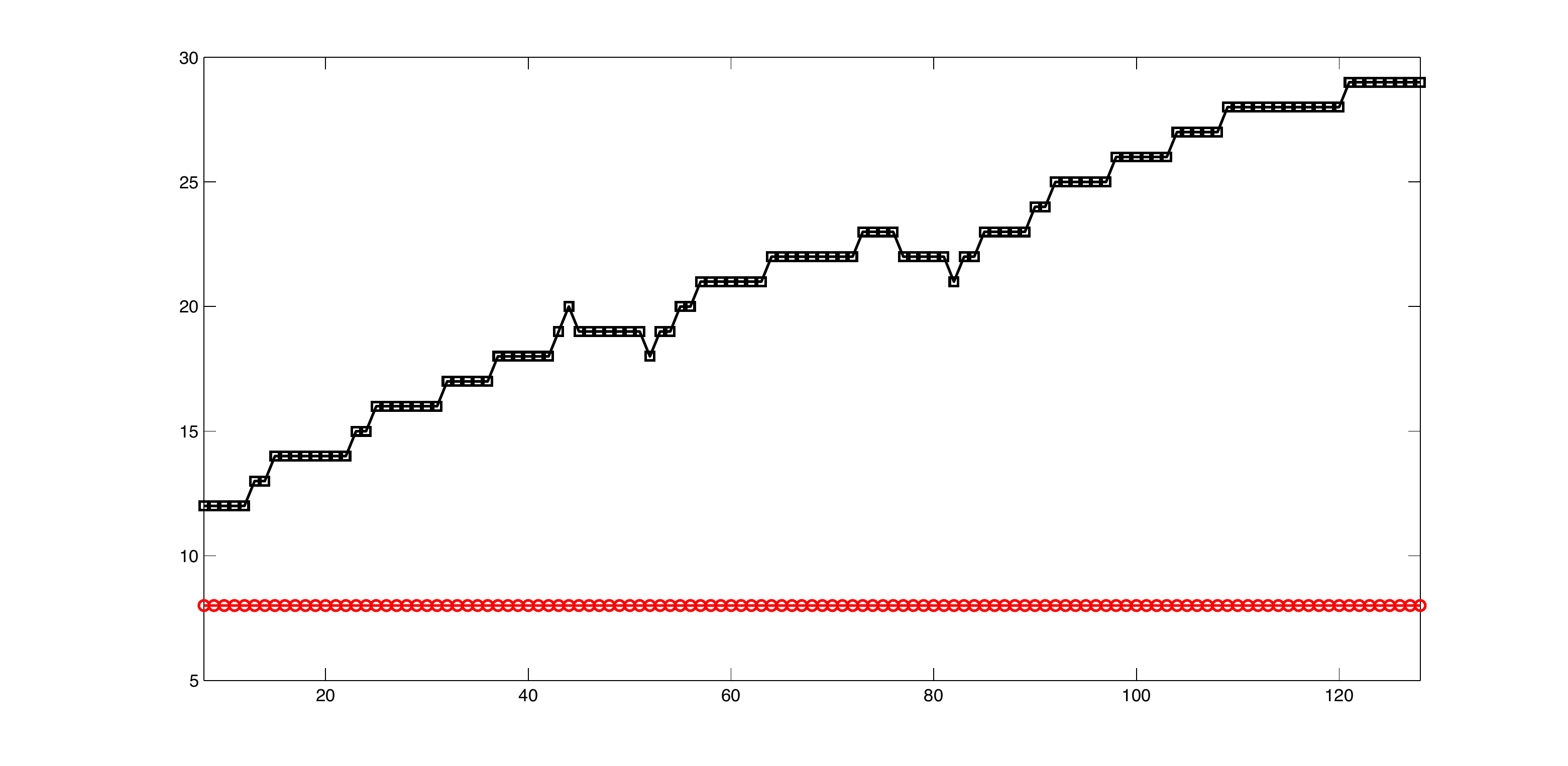}\\
\includegraphics[height=85mm]{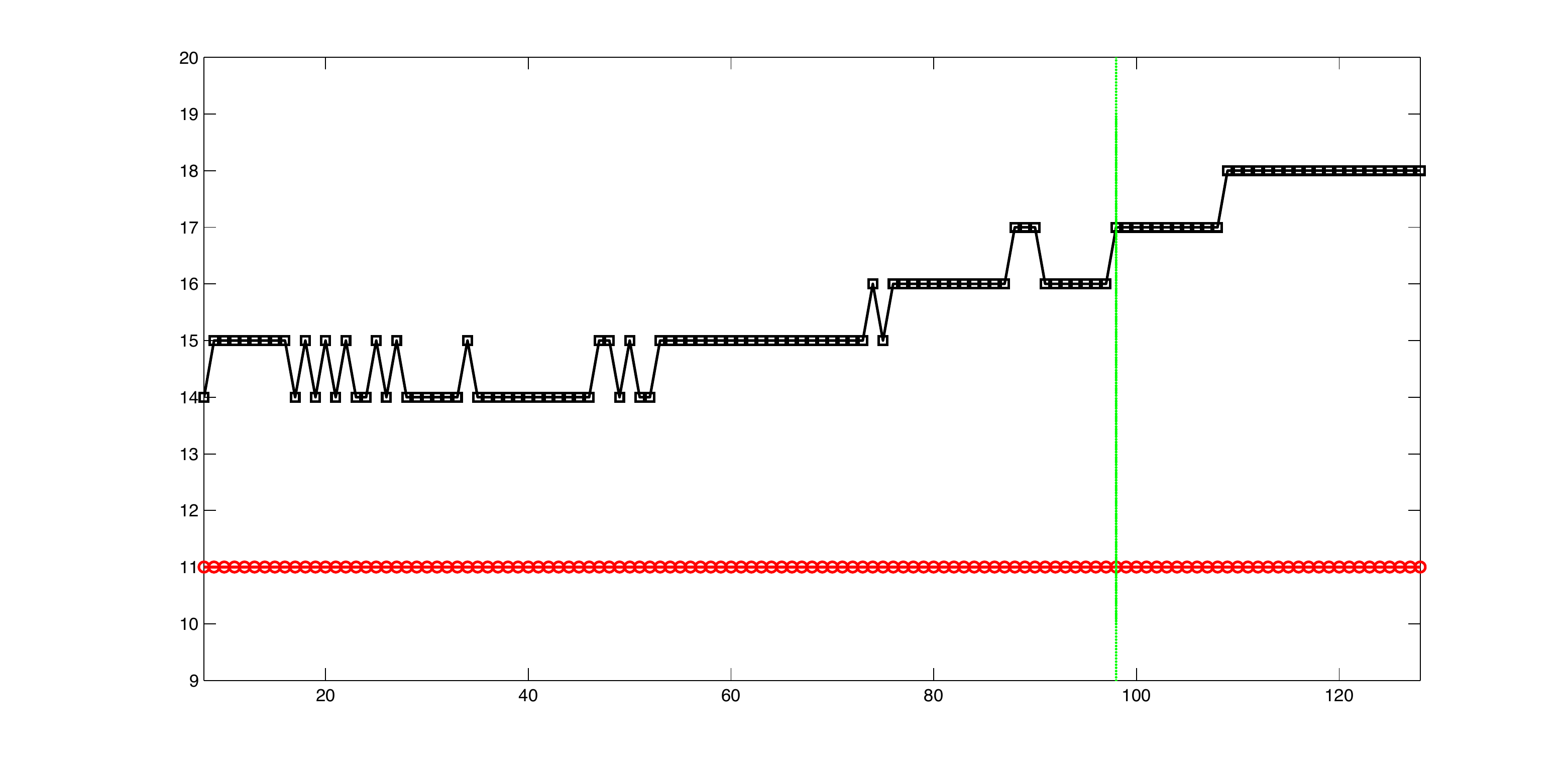}
\caption{Plot of the numbers of iterations required by our solvers based on the operators $\mathcal{A}_{k,S_{ik/2}}$ (black)  and $\mathcal{B}_{k,2,k,0.4\mathcal{H}^{2/3}k^{1/3}}$ (red)  as a function of the wavenumber $k$ for 121 values of $k$ ranging from $k=8$ to $k=128$ for a unit sphere scatterer (top) and an ellipsoid of of principal axes $2$, $0.5$, and $2$ (bottom). The vertical green line in the figure on the bottom represent the cut-off frequency $k_c=98$: for values of the wavenumber larger than $k_c$ our solvers based on the operators $\mathcal{B}_{k,2,k,0.4\mathcal{H}^{2/3}k^{1/3}}$ require less computational times than our solvers based on the operators $\mathcal{A}_{k,S_{ik/2}}$.}
\label{fig:iteration_counts}
\end{figure}

We summarize the results presented in this section as follows:
\begin{enumerate}
\item the numbers of iterations required by solvers based on the CFIE operators $\mathcal{C}_k$ defined in equation~\eqref{eq:cIE} and the Calder\'on-Ikawa CFIER operators $\mathcal{A}_{k,S_{ik/2}}$ defined in equation~\eqref{eq:fIE} depend on the wavenumber $k$;
\item the numbers of iterations required by solvers based on the Calder\'on-Complex CFIER operators $\mathcal{B}_{k,2,k,0.4\mathcal{H}^{2/3}k^{1/3}}$ defined in equation~\eqref{eq:CFIERc} are small and independent of the wavenumber $k$;
\item although due to the more complex nature of their underlying operators, matrix-vector products associated with the  Calder\'on-Complex CFIER operators $\mathcal{B}_{k,2,k,0.4\mathcal{H}^{2/3}k^{1/3}}$ are more expensive than those associated to the CFIE operators $\mathcal{C}_k$ and  Calder\'on-Ikawa CFIER operators $\mathcal{A}_{k,S_{ik/2}}$, solvers based on the former formulations give rise to important computational gains over solvers based on the latter formulations;
\item for a given wavenumber $k$, the numbers of iterations required by solvers based on both CFIER operators $\mathcal{A}_{k,S_{ik/2}}$ and $\mathcal{B}_{k,2,k,0.4\mathcal{H}^{2/3}k^{1/3}}$ do not depend on the discretization size; this is in contrast to the case of solvers based on the classical CFIE operators $\mathcal{C}_k$.
\end{enumerate}

We conclude this section with a brief comparison between the Calder\'on-Complex CFIER formulations showcased in this work and other regularized formulations based on approximations of the DtN operators proposed in the literature
\begin{enumerate}
\item the Calder\'on-Complex CFIER operators $\mathcal{B}_{k,2,k,0.4\mathcal{H}^{2/3}k^{1/3}}$ were shown to have excellent properties in the high-frequency regime for scatterers with slowly varying curvatures; a Galerkin discretization of these seems possible along the lines in~\cite{Andriulli,Andriulli1}. Although the evaluation of the regularizers $\mathcal{T}_{k+i\kappa_2}$ is somewhat expensive, it has the advantage of being not that different from that of the EFIE operators $\mathcal{T}_k$. The operator composition $\mathcal{T}_k\ \mathcal{T}_{k+i\kappa_2}$ can be evaluated without a fuss in the context of Nystr\"om discretization and it also can be evaluated in the context of Galerkin discretizations, albeit there are some subtleties associated with the latter case~\cite{Andriulli,Andriulli1};
\item another choice of regularizing operators was given in~\cite{BorelLevadouxAlouges,AlougesLevadoux} by $\mathcal{R}=-2\sum_j \chi_j\mathcal{T}_k\chi_j$ where $\chi_j$ are smooth and compactly supported functions such as $\sum_j \chi_j^2=1$ on $\Gamma$; the ensuing formulations do not appear to lead to iteration counts that are independent of frequency~\cite{BorelLevadouxAlouges,AlougesLevadoux}, and the evaluation of that regularizer is more expensive than the evaluation of the regularizer $\mathcal{R}=-2\mathcal{T}_{k+i\kappa_2}$ that we used in this text;
\item in the very recent contribution~\cite{Boujaji}, Galerkin discretizations of the operators $PS\mathcal{B}_{k,2,k,0.4k^{1/3}}$ using loop and star basis functions were introduced for the solution of PEC scattering problems. The results presented in that contribution show that the iteration counts of these formulations are independent of frequency in the low to medium frequency range, even for Lipschitz domains. Another recent contribution~\cite{LevadouxN} uses the same type of principal value regularizing operators for the solution of time-harmonic Maxwell equations with impedance boundary conditions via boundary integral equations. The use of Fast Multipoles in conjunction with Galerkin discretization of the regularized formulations enables the solution of larger size problems (up to $10$ wavelength) in~\cite{LevadouxN}. The regularized formulations are shown to lead to numbers of iterations that are virtually independent of frequency for a larger range of frequencies. One potential drawback of these formulations is their need to invert surface vector Laplace-Beltrami operators which prevents the use of non-conforming meshes~\cite{Bendali1}.
\end{enumerate}

\section{Conclusions}

We presented several versions of Regularized Combined Field Integral
Equations formulations for the solution of electromagnetic scattering equations with PEC boundary conditions. These formulations consist of combined field representations where the electric field integral operators act on certain regularizing operators. The construction of the regularizing operators, in turn, is based on Calder\'on's calculus and complexification techniques. These
integral equation formulations are well conditioned on account of the
choice of the regularizing operators. Some of the resulting integral operators possess excellent
eigenvalues clustering properties, which translate in the case of redsmooth scatterers with slowly varying  curvatures into 
very small numbers of iterations necessary to obtain the solution of the ensuing
linear systems, regardless of the discretization size and the frequency of the scattering problems. In addition, the
far field errors incurred by our implementation are of the same order
as those resulting from the classical Combined Field Integral Equation
counterparts. Thus, the important gains in computational complexity make the Regularized Combined Field
Integral Equations discussed in this text a viable method of solution to PEC 
scattering problems.

\section*{Acknowledgments}
Yassine Boubendir gratefully acknowledge support from NSF through contract DMS-1016405 and DMS-1319720. Catalin Turc gratefully acknowledge support from NSF through contract DMS-1312169. 

\section{Appendix}

We present in this section a proof of the technical Lemma~\ref{ImKm} 
\begin{lemma}
There exist constants $C_j>0,j=1,\ldots,4$ and a number $\tilde{k}_0>0$ such that
$$ (i)\ C_1 k^{-2}(n^2+k^2)^{1/2}\leq iJ'_{n+1/2}(ik)(H_{n+1/2}^{(1)})'(ik)\leq C_2k^{-2}(n^2+k^2)^{1/2}$$
$$ (ii)\ \frac{1}{4}(n^2+k^2)^{-1/2}\leq -S_n^{(1)}(ik)\leq C_3(n^2+k^2)^{-1/2}+C_4k^{-2},\ $$
$$ (iii) |J'_{n+1/2}(ik)H_{n+1/2}^{(1)}(ik)|\leq C_4k^{-1},\ |J_{n+1/2}(ik)(H_{n+1/2}^{(1)})'(ik)|\leq C_4k^{-1}$$ 
for all $k>\tilde{k}_0$ and all $n\geq 0$.
\end{lemma}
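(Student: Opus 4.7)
The plan is to reduce every quantity in (i)--(iii) to products of the modified Bessel functions $I_\nu(k)$, $K_\nu(k)$ and their derivatives with $\nu=n+1/2$, for which sharp two-sided bounds uniform in $\nu\ge 0$ and large $k$ are available from Debye/Nicholson-type uniform asymptotic expansions. Using $J_\nu(ik)=i^\nu I_\nu(k)$ and $H_\nu^{(1)}(ik)=\tfrac{2}{\pi i^{\nu+1}}K_\nu(k)$, together with the chain rule, one computes
\[
iJ'_{n+1/2}(ik)(H^{(1)}_{n+1/2})'(ik) \;=\; -\tfrac{2}{\pi}I'_\nu(k)K'_\nu(k),\qquad J_{n+1/2}(ik)H^{(1)}_{n+1/2}(ik) \;=\; -\tfrac{2i}{\pi}I_\nu(k)K_\nu(k),
\]
and analogous formulas for the two mixed products in (iii). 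Since $I'_\nu>0>K'_\nu$ for real $k>0$, the expression in (i) is positive, and (iii) reduces to bounds on $|I_\nu K'_\nu|$ and $|I'_\nu K_\nu|$.

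The core analytic ingredient consists of the following uniform estimates, valid for $\nu\ge 0$ and $k\ge\tilde k_0$:
\[
\tfrac{1}{2}(\nu^2+k^2)^{-1/2}\ \le\ I_\nu(k)K_\nu(k)\ \le\ C(\nu^2+k^2)^{-1/2},
\]
\[
c_1 k^{-2}(\nu^2+k^2)^{1/2}\ \le\ -I'_\nu(k)K'_\nu(k)\ \le\ c_2 k^{-2}(\nu^2+k^2)^{1/2},
\]
\[
|I_\nu(k)K'_\nu(k)|+|I'_\nu(k)K_\nu(k)|\ \le\ C\,k^{-1}.
\]
The first is the classical Nicholson-type estimate (or a direct consequence of the integral representation $I_\nu(k)K_\nu(k)=\int_0^\infty J_0(2k\sinh t)e^{-2\nu t}\,dt$, which shows monotonicity in $\nu$ and yields the sharp leading constant); the second follows by differentiating the Debye expansion of $I_\nu K_\nu$ together with the modified Bessel equation; the third is obtained from the Wronskian $I'_\nu K_\nu - I_\nu K'_\nu = 1/k$ combined with the first two. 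Granting these, items (i) and (iii) are immediate.

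For (ii), I would use the paper's relation $-k\,S_n^{(1)}(ik)=\Lambda_n^{(1)}(ik)+n(n+1)j_n(ik)h_n^{(1)}(ik)$ and introduce the modified Riccati--Bessel functions $\Phi_n(x)=\sqrt{\pi x/2}\,I_\nu(x)$ and $\Psi_n(x)=\sqrt{2x/\pi}\,K_\nu(x)$, both solutions of $u''=(1+\nu^2/x^2)u$. A direct calculation yields $\Lambda_n^{(1)}(ik)=-\tfrac{1}{k}\Phi_n'(k)\Psi_n'(k)$ and $j_n(ik)h_n^{(1)}(ik)=-\tfrac{1}{k^2}\Phi_n(k)\Psi_n(k)$. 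Substituting $2\Phi_n'\Psi_n'=(\Phi_n\Psi_n)''-2(1+\nu^2/k^2)\Phi_n\Psi_n$ from the ODE, and using the arithmetic identity $\nu^2-n(n+1)=\tfrac14$, one obtains the clean expression
\[
-S_n^{(1)}(ik)\;=\;\Bigl(1+\tfrac{1}{4k^2}\Bigr)\frac{h(k)}{k}\;-\;\frac{h''(k)}{2k},\qquad h(x)\;:=\;x\,I_\nu(x)K_\nu(x).
\]
Using $h(k)\asymp k(\nu^2+k^2)^{-1/2}$ from the first uniform estimate and bounding $h''(k)=\mathcal{O}(k^{-2}(\nu^2+k^2)^{-1/2})$---either by differentiating the Debye expansion of $h$ or, more robustly, by expressing $h''$ as a linear combination of $h/k^2$ and $I'_\nu K'_\nu$, each already controlled---one reads off that the leading behavior of $-S_n^{(1)}(ik)$ is $\tfrac12(\nu^2+k^2)^{-1/2}$, with the remaining two terms contributing corrections of size $\mathcal{O}(k^{-2})$. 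The two-sided bound in (ii) then follows, the constant $\tfrac14$ in the lower bound being realized once $k$ is taken large enough to absorb the $\mathcal{O}(k^{-2})$ remainders.

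The main technical obstacle is ensuring that the error terms in the asymptotic estimates are controlled \emph{uniformly} in the order $n$. The Debye expansion is uniform only for $\nu/k$ bounded away from $0$ and $\infty$, so the regimes $\nu\ll k$ and $\nu\gg k$ must be handled by the classical fixed-order asymptotics $I_\nu(k)K_\nu(k)\sim 1/(2k)$ and the large-order asymptotics $I_\nu(k)K_\nu(k)\sim 1/(2\nu)$, respectively; one must then verify that the patched estimates are two-sided and consistent across the transitional regime $\nu\sim k$. This patching proceeds exactly as in the proof of Lemma~3.1 of~\cite{turc7}, to which the statement of the present lemma is the direct electromagnetic analogue.
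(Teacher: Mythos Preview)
Your treatment of (i) and (iii) is essentially the paper's: reduce to $I_\nu,K_\nu$ and their derivatives via $J_\nu(ik)=i^\nu I_\nu(k)$, $H_\nu^{(1)}(ik)=\tfrac{2}{\pi i^{\nu+1}}K_\nu(k)$, then patch the Debye expansion (for $\nu$ large) with the fixed-order large-$k$ asymptotics, exactly as in Lemma~3.1 of~\cite{turc7}. No issues there.

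For (ii) you take a genuinely different route. The paper uses the recurrences $J'_\nu=\tfrac{\nu}{z}J_\nu-J_{\nu+1}$ (and analogues) together with the Wronskian $I_\nu K_{\nu+1}+I_{\nu+1}K_\nu=1/k$ to rewrite
\[
-S_n^{(1)}(ik)=\frac{n+1}{k^{2}}\bigl(2kI'_{\nu}(k)K_{\nu}(k)-1\bigr)+I_{\nu+1}(k)K_{\nu+1}(k),\qquad \nu=n+\tfrac12,
\]
so that the two pieces are separately and directly estimable: the second is $\asymp(\nu^2+k^2)^{-1/2}$ by the Nicholson-type bound, and the first is shown to be $\mathcal{O}(k^{-2})$ (uniformly in $n$) from the next-order terms of the asymptotics. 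Your decomposition via the Riccati--Bessel product $h=kI_\nu K_\nu$ is elegant and, after correcting two slips, gives the clean identity $-S_n^{(1)}(ik)=\tfrac{h(k)}{k}-\tfrac{h''(k)}{2k}$: first, the Riccati--Bessel ODE is $u''=\bigl(1+\tfrac{n(n+1)}{x^2}\bigr)u$, not $1+\tfrac{\nu^2}{x^2}$ (this is exactly where the identity $\nu^2-n(n+1)=\tfrac14$ enters, and it makes your extra $\tfrac{1}{4k^2}$ term disappear); second, $\Lambda_n^{(1)}(ik)=-\Phi_n'(k)\Psi_n'(k)$, with no additional factor of $1/k$. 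These are harmless lower-order slips.

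The substantive gap is your proposed bound on $h''$. Writing $h''$ ``as a linear combination of $h/k^2$ and $I'_\nu K'_\nu$, each already controlled'' does not work: expanding gives
\[
h''=(I_\nu K_\nu)'+2\Bigl(k+\tfrac{n(n+1)}{k}+\tfrac{1}{4k}\Bigr)I_\nu K_\nu+2k\,I'_\nu K'_\nu,
\]
and the last two terms are each of size $\asymp k^{-1}(\nu^2+k^2)^{1/2}$, which is not small; the smallness of $h''$ comes from their near-cancellation, which a term-by-term bound cannot see. Differentiating the Debye expansion of $h$ does give the correct estimate $h''=\mathcal{O}\bigl(k(\nu^2+k^2)^{-3/2}\bigr)$ (so that $\tfrac{h''}{2k}\ll(\nu^2+k^2)^{-1/2}$), but then you must again control the error terms uniformly across the three regimes, i.e.\ redo the patching one derivative deeper. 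The paper's index-shift avoids this: the only ``cancellation'' it has to quantify is the scalar $2kI'_\nu K_\nu-1$, which is a single next-order Debye computation rather than a second derivative of the full product.
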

\begin{proof}
(i) We begin by using the representation of the functions $J_{n+1/2}(ik)$ and $H_{n+1/2}^{(1)}(ik)$ in terms of the Bessel and Hankel functions of the third kind
$$iJ'_{n+1/2}(ik)(H_{n+1/2}^{(1)})'(ik)=-\frac{2}{\pi}I'_{n+1/2}(k)K'_{n+1/2}(k)$$
 where $I'_\nu(k)>0$ and $K'_\nu(k)<0$ for $\nu\geq 0$. Using the uniform asymptotic expansions as $\nu\rightarrow\infty$ (Formulas 9.7.9 and 9.7.10 in~\cite{Abramowitz})
\begin{eqnarray}\label{eq:estimateK}
I'_\nu(\nu z)&\sim&\frac{1}{\sqrt{2\pi\nu}}\frac{(1+z^2)^\frac{1}{4}}{z}e^{\nu\mu}(1+\mathcal{O}(\nu^{-1}))\nonumber\\
K'_\nu(\nu z)&\sim&-\frac{\sqrt{\pi}}{\sqrt{2\nu}}\frac{(1+z^2)^\frac{1}{4}}{z}e^{-\nu\mu}(1+\mathcal{O}(\nu^{-1}))
\end{eqnarray}
where $\mu=\sqrt{1+z^2}+\ln\frac{z}{1+\sqrt{1+z^2}}$,  we get that there exists a constant $N_0>0$ such that
\begin{equation}
\frac{1}{4} k^{-2}((n+1/2)^2+k^2)^{1/2}\leq -I'_{n+1/2}(k)K'_{n+1/2}(k)\leq k^{-2}((n+1/2)^2+k^2)^{1/2},\nonumber
\end{equation}
for all $n>N_0,\ k>0$. Given that we can choose the constant $N_0$ above to further satify $(n+1/2)^2+k^2<2(n^2+k^2)$ for all $n>N_0$ and all $k>0$, we obtain the following estimate 
\begin{equation}\label{eq:bound_eig_ik_00}
\frac{1}{4} k^{-2}(n^2+k^2)^{1/2}\leq -I'_{n+1/2}(k)K'_{n+1/2}(k)\leq \sqrt{2}k^{-2}(n^2+k^2)^{1/2},\ {\rm for\ all}\ n>N_0,\ k>0.
\end{equation}
 Using the asymptotic expansion (Formula 9.7.6 in~\cite{Abramowitz}) which is valid for $\nu$ fixed and $z\to\infty$
\begin{equation}
-I'_\nu(z)K'_\nu(z)\sim \frac{1}{2z}\left(1+\frac{1}{2}\frac{\mu-3}{(2z)^2}+\mathcal{O}\left(\frac{\mu^2}{z^4}\right)\right),\ \mu=4\nu^2
\end{equation}
we get that for a fixed $n$ there exists $k_n'$ such that
\begin{equation}\label{eq:bound_eig_ik_1}
\frac{1}{4k}\leq -I'_{n+1/2}(k)K'_{n+1/2}(k)\leq \frac{1}{k},\ {\rm for\ all}\ k\geq k'_n.
\end{equation}
It follows from the estimate~\eqref{eq:bound_eig_ik_1} that there exist $k_1=\max_{n\leq N_0}k'_n$ and  $c=(1+N_0^2/k_1^2)^{-\frac{1}{2}}$ such that
\begin{equation}\label{eq:bound_eig_ik_2}
\frac{1}{4} ck^{-2}(n^2+k^2)^{1/2}\leq \frac{1}{4k}\leq -I'_{n+1/2}(k)K'_{n+1/2}(k)\leq \frac{1}{k}\leq \sqrt{2}k^{-2}(n^2+k^2)^{1/2},\ 0\leq n\leq N_0,\ k\geq k_1.
\end{equation}
We obtain by combining estimates~\eqref{eq:bound_eig_ik_00} and~\eqref{eq:bound_eig_ik_2} that there exist constants $C_1=\frac{1}{2\pi}\max(1,(1+N_0^2/k_1^2)^{-\frac{1}{2}})$ and $C_2=\frac{2\sqrt{2}}{\pi}$ such that
\begin{equation}\label{eq:bound_eig_ik}
C_1 k^{-2}(n^2+k^2)^{1/2}\leq iJ'_{n+1/2}(ik)(H_{n+1/2}^{(1)})'(ik)=-\frac{2}{\pi}I'_{n+1/2}(k)K'_{n+1/2}(k)\leq C_2 k^{-2}(n^2+k^2)^{1/2}
\end{equation}
for all $n\geq 0$ and all $k\geq k_1$.\\

(ii) In order to prove the estimate concerning $S_n^{(1)}(ik)$, we use the identities (formulas 9.1.27 in~\cite{Abramowitz}) 
$$J'_{n+1/2}(K)=\frac{m+1/2}{K}J_{n+1/2}(K)-J_{n+3/2}(K),\ (H^{(1)}_{n+1/2})'(K)=\frac{n+1/2}{K}H_{n+1/2}^{(1)}(K)-H_{n+3/2}^{(1)}(K)$$to derive
\begin{eqnarray}
-S_n^{(1)}(ik)&=&\frac{\pi}{2ik^2}[(2n^2+3n+1)J_{n+1/2}(ik)H^{(1)}_{n+1/2}(ik)-k^2J_{n+3/2}(ik)H^{(1)}_{n+3/2}(ik)\nonumber\\
&-&ik(n+1)(J_{n+1/2}(ik)H^{(1)}_{n+3/2}(ik)+J_{n+3/2}(ik)H^{(1)}_{n+1/2}(ik))].\nonumber
\end{eqnarray}
We use Bessel and Hankel functions of the third kind $K_{n+1/2}$ and $I_{n+1/2}$ together with the Wronskian identity $I_\nu(k)K_{\nu+1}(k)+I_{\nu+1}(k)K_\nu(k)=\frac{1}{k}$ (formula 9.6.15 in~\cite{Abramowitz}) to reexpress the previous identity in the form
\begin{eqnarray}
-S_n^{(1)}(ik)&=&-\frac{1}{k^2}[(2n^2+3n+1)I_{n+1/2}(k)K_{n+1/2}(k)-k^2I_{n+3/2}(k)K_{n+3/2}(k)\nonumber\\
&+&k(n+1)(2I_{n+3/2}(k)K_{n+1/2}(k)-1/k)].\nonumber
\end{eqnarray}
Using the fact that $I'_{n+1/2}(k)=\frac{n+1/2}{k}I_{n+1/2}(k)+I_{n+3/2}(k)$ ( (formulas 9.6.26 in~\cite{Abramowitz})) we get
\begin{equation}\label{eq:final_form_eq_S1}
-S_n^{(1)}(ik)=\frac{n+1}{k^2}(2kI'_{n+1/2}(k)K_{n+1/2}(k)-1)+I_{n+3/2}(k)K_{n+3/2}(k).
\end{equation}
Using the uniform asymptotic expansions as $\nu\rightarrow\infty$ (Formulas 9.7.8 and 9.7.9 in~\cite{Abramowitz})
\begin{eqnarray}\label{eq:estimateK1}
K_\nu(\nu z)&\sim&\frac{\sqrt{\pi}}{\sqrt{2\nu}}\frac{e^{-\nu\mu}}{(1+z^2)^\frac{1}{4}}(1+\mathcal{O}(\nu^{-1}))\nonumber\\
I'_\nu(\nu z)&\sim&\frac{1}{\sqrt{2\pi\nu}}\frac{(1+z^2)^\frac{1}{4}}{z}e^{\nu\mu}(1+\mathcal{O}(\nu^{-1}))
\end{eqnarray}
where $\mu=\sqrt{1+z^2}+\ln\frac{z}{1+\sqrt{1+z^2}}$,  we get that there exist constants $N_1>0$ and $c_1>0$ such that
\begin{equation}\label{eq:bound_eig_ik_0}
\frac{1}{2} c_1k^{-2}\leq \frac{n+1}{k^2}(2kI'_{n+1/2}(k)K_{n+1/2}(k)-1)\leq c_1k^{-2},\ {\rm for\ all}\ n>N_1,\ k>0.
\end{equation}
 Using the asymptotic expansions (Formula 9.7.2 and 9.7.3 in~\cite{Abramowitz}) which are valid for $\nu$ fixed and $z\to\infty$
\begin{eqnarray}\label{eq:estimateK1fixedm}
K_\nu(z)&\sim&\frac{\sqrt{\pi}}{\sqrt{2z}}e^{-z}\left(1+\frac{\mu-1}{8z}+\mathcal{O}\left(\frac{\mu^2}{z^2}\right)\right)\nonumber\\
I'_\nu(z)&\sim&\frac{1}{\sqrt{2\pi z}}e^{z}\left(1-\frac{\mu+3}{8z}+\mathcal{O}\left(\frac{\mu^2}{z^2}\right)\right),\ \mu=4\nu^2
\end{eqnarray}
we get that for a fixed $n\geq 0$ there exists $\tilde{k}_n$ such that
\begin{equation}\label{eq:bound_eig_ik_1n}
\frac{n+1}{4k^3}\leq \frac{m+1}{k^2}(2kI'_{n+1/2}(k)K_{n+1/2}(k)-1)\leq \frac{n+1}{k^3},\ {\rm for\ all}\ k\geq \tilde{k}_n.
\end{equation}
We conclude that there exists a constant $\tilde{c}=\max(c_1,\frac{N_1+1}{k_2})$ where $\tilde{k}_2=\max_{0\leq n \leq N_1}\tilde{k}_n$, such that
\begin{equation}\label{eq:bound_eig_ik_1n}
0\leq \frac{n+1}{k^2}(2kI'_{n+1/2}(k)K_{n+1/2}(k)-1)\leq \frac{\tilde{c}}{k^2},\ {\rm for\ all}\ k\geq \tilde{k}_2,\ {\rm for\ all}\ n\geq 0.
\end{equation}
Furthermore, if we use the following result established in Lemma 3.1 in~\cite{turc7}, namely there exists a constant $\tilde{C}_1$ and a number $\hat{k}_2>0$ such that
$$\frac{1}{4}(n^2+k^2)^{-1/2}\leq I_{n+3/2}(k)K_{n+3/2}(k) \leq \tilde{C}_1(n^2+k^2)^{-1/2} \ {\rm for\ all}\ k\geq \hat{k}_2,\ {\rm for\ all}\ n\geq 0$$
together with the estimate established in equation~\eqref{eq:bound_eig_ik_1n}, we obtain if we take into account equation~\eqref{eq:final_form_eq_S1} that there exists a constant $C_3$ and a number $k_2=\max\{\tilde{k}_2,\hat{k}_2\}$ such that
$$\frac{1}{4}(n^2+k^2)^{-1/2}\leq -S_n^{(1)}(ik)\leq C_3(n^2+k^2)^{-1/2}+C_4k^{-2} \ {\rm for\ all}\ k\geq k_2,\ {\rm for\ all}\ n\geq 0.$$

(iii) We get from the asymptotic expansions~\eqref{eq:estimateK1} that there exist constants $N_1>0$ and $c_1>0$ such that
\begin{equation}\label{eq:bound_eig_ik_0}
\frac{1}{4} k^{-1}\leq I'_{n+1/2}(k)K_{n+1/2}(k)\leq k^{-1},\ {\rm for\ all}\ n>N_1,\ k>0.
\end{equation}
On the other hand, using the asymptotic expansions~\eqref{eq:estimateK1fixedm} we get that for a fixed $n\geq 0$ there exists $\tilde{k}_n$ such that
\begin{equation}\label{eq:bound_eig_ik_1n}
\frac{1}{4}k^{-1}\leq I'_{n+1/2}(k)K_{n+1/2}(k)\leq k^{-1},\ {\rm for\ all}\ k\geq \tilde{k}_n.
\end{equation}
If we let $k_3=\max_{0\leq n\leq N_1}\tilde{k}_n$ we obtain that
\begin{equation}
|J'_{n+1/2}(ik)H_{n+1/2}^{(1)}(ik)|=\frac{2}{\pi}I'_{n+1/2}(k)K_{n+1/2}(k)\leq \frac{2}{\pi} k^{-1},\ {\rm for\ all}\ k\geq k_3,\ 0\leq n.
\end{equation}
The estimate concerning $|J_{n+1/2}(ik)(H_{n+1/2}^{(1)})'(ik)|$ follows form the previous estimate and the Wronskian identity (formula 9.6.15 in~\cite{Abramowitz}). Finally, if we take $\tilde{k}_0=\max\{k_1,k_2,k_3\}$, the result of the Lemma follows.
\end{proof}

\end{document}